\newtheorem{theorem}{Theorem}[section]
\newtheorem{corollary}[theorem]{Corollary}
\newtheorem{lemma}[theorem]{Lemma}
\newtheorem{proposition}[theorem]{Proposition}
\newtheorem{example}[theorem]{Example}
\newtheorem{remark}[theorem]{Remark}
\newtheorem{definition}[theorem]{Definition}
\newtheorem{conjecture}[theorem]{Conjecture}
\newtheorem{question}[theorem]{Question}
\newtheorem{*theorem}[theorem]{*Theorem}
\newtheorem{*corollary}[theorem]{*Corollary}
\newtheorem{*lemma}[theorem]{*Lemma}
\newtheorem{*proposition}[theorem]{*Proposition}
\def\N{\mathbb{N}}
\def\real{\mathbb{R}}
\def\C{\mathbb{C}}
\def\normal{\mathcal{N}}
\def\gauss{\mathcal{G}}
\def\conv{\operatorname{conv}}
\def\cone{\operatorname{cone}}
\def\vertices{\operatorname{vertices}}
\def\d{\operatorname{dist}}
\def\ops{\operatorname{S}}
\begin{document}

\title{A counterexample to the Hirsch conjecture}

\author{
Francisco Santos\thanks{
Supported in part by the Spanish Ministry of Science through grants  MTM2008- 04699-C03-02 and CSD2006-00032 (i-MATH) and by MICINN-ESF EUROCORES programme EuroGIGA - ComPoSe IP04 - Project EUI-EURC-2011-4306.
}
}

\date{To Victor L.~Klee (1925--2007), in memoriam\footnote{I met Vic Klee only once, 
when I visited the University of Washington to give a seminar talk.
Although he was retired---he was already 76 years old---he came 
to the Department of Mathematics to talk to me and we
had a nice conversation during which he asked me 
``Why don't you try to disprove the Hirsch Conjecture?'' 
This work is the answer to that question.
}}

\maketitle

\begin{abstract}
The Hirsch Conjecture (1957) stated that the graph of a $d$-dimensional polytope with $n$ facets 
cannot have (combinatorial) diameter greater than $n-d$. That is, that any two vertices of the polytope can be connected by a path of at most $n-d$ edges. 

This paper presents the first counterexample to the conjecture.
Our polytope has dimension $43$ and $86$ facets. It is obtained from a $5$-dimen\-sio\-nal polytope with $48$ facets  
which violates a certain generalization of the $d$-step conjecture of Klee and Walkup.
\end{abstract}

\section{Introduction}

The Hirsch conjecture is the following fundamental statement about the combinatorics of polytopes.
It was stated by Warren M.~Hirsch in 1957 in the context of the \emph{simplex method},
and  publicized by G.~Dantzig in his 1963 monograph on linear programming~\cite{Dantzig-book}:

\begin{quote}
The (combinatorial) diameter of a polytope  of dimension $d$ with $n$ facets  cannot be greater than $n-d$.
\end{quote}

Here we call \emph{combinatorial diameter} of a polytope the maximum number of steps needed to go from one vertex to another, where a step consists in traversing an edge. Since we never refer to any other diameter in this paper, we will often omit the word ``combinatorial''. We say that a polytope is \emph{Hirsch} if it satisfies the conjecture, and \emph{non-Hirsch} if it does not.

Our main result (Corollary~\ref{coro:nonHirsch}) is the construction of a $43$-dimensional polytope with $86$ facets and diameter (at least) $44$. Via products and glueing copies of it we can also construct an infinite family of polytopes in fixed dimension $d$ with increasing number $n$ of facets and of diameter bigger than $(1+\epsilon) n$, for a positive constant $\epsilon$ (Theorem~\ref{thm:asymptotic}).

\subsection*{Linear programming, the simplex method and the Hirsch Conjecture}

Linear programming (LP)
is the problem of maximizing (or minimizing) a linear functional subject to linear inequality constraints.
%
%
For more than 30 years the only applicable method for LP was
the \emph{simplex method}, devised in 1947 by G.~Dantzig~\cite{Dantzig-simplex}. This method solves a linear program by first finding a vertex of the \emph{feasibility region} $P$, which is a facet-defined \emph{polyhedron}, and then
jumping from vertex to neighboring vertex along the edges of $P$, always increasing the functional to be maximized.  When such a \emph{pivot step} can no longer increase the functional, convexity guarantees that we are at the global maximum. (A \emph{pivot rule} has to be specified for the algorithm to choose among the possible neighboring vertices; the performance of the algorithm may depend on the choice).

To this day, the complexity of the simplex method is quite a mystery:  exponential (or almost) worst-case behavior of the method is known for most of the pivot rules practically used or theoretically proposed. (For two recent breakthrough additions see~\cite{Friedman, Friedman-Hansen-Zwick}). But on the other hand, as M.~Todd recently put it, \emph{``the number of steps [that the simplex method takes] to solve a problem with $m$ equality constraints in $n$ nonnegative variables is almost always at most a small multiple of $m$, say $3m$''}~\cite{Todd:Dantzig}. Because of this ``\emph{the simplex method has remained, if not the method 
of choice, a method of choice, usually competitive with, and on some classes of problems superior to, the more modern approaches}''~\cite{Todd:Dantzig}. This is so even after the discovery, 30 years ago, of polynomial time algorithms for linear programming by Khachiyan and Karmarkar~\cite{Karmarkar, Khachiyan}.
In fact, in the year 2000 the simplex method was selected as one of the ``10 algorithms with the greatest influence on the development and practice of science and engineering in the 20th century'' by the journal \emph{Computing in Science and Engineering}~\cite{Dongarra-Sullivan-topten}.

It is also worth mentioning that
Khachiyan and Karmarkar's algorithms are polynomial in the \emph{bit model} of complexity but they are not polynomial  in the  \emph{real number machine} model of Blum et al.~\cite{BCSS,BSS}. Algorithms that are polynomial in both models are called \emph{strongly polynomial}. S.~Smale~\cite{Smale} listed among his ``mathematical problems for the next century'' the question whether linear programming can be performed in strongly polynomial time. A polynomial pivot rule for the simplex method would answer this in the affirmative. 

%
For information on algorithms for linear programming and their complexity, including attempts to explain the good behavior of the simplex method \emph{without} relying on bounds for the diameters of \emph{all} polytopes, 
see~\cite{Borgwardt-book, GritzmannKlee, Megiddo:SurveyLPComplexity, Spielman:WhySimplexUsually,Terlaky-Zhang}.


\subsection*{Brief history of the Hirsch conjecture}

Warren M.~Hirsch (1918--2007), a professor of probability at the Courant Institute, communicated his conjecture to G.~Dantzig in connection to the simplex method: the diameter of a polytope is a lower bound (and, hopefully, an approximation) to the number of steps taken by the simplex method in the worst-case. Hirsch had verified the conjecture for $n-d\le 4$ and Dantzig included this statement and the conjecture in his 1963 book~\cite[p.~160]{Dantzig-book}.

The original conjecture did not distinguish between \emph{bounded} or \emph{unbounded} feasibility regions. In modern terminology, a bounded one is a \emph{(convex) polytope} while a perhaps-unbounded one is a \emph{polyhedron}. But the unbounded case was disproved by Klee and Walkup~\cite{Klee:d-step} in 1967 with the construction of a polyhedron of dimension $4$ with $8$ facets and diameter $5$. Since then the expression ``Hirsch Conjecture'' has been used referring to the bounded case.

In the same paper, Klee and Walkup established the following crucial statement. See a proof in Section~\ref{sec:general-d-step}:

\begin{lemma}[Klee, Walkup~\cite{Klee:d-step}]
\label{lemma:dstep}
For positive integers $n>d$, 
let $H(n,d)$ denote the maximum possible diameter of the graph of a $d$-polytope with $n$ facets. Then, $H(n,d)\le H(2n-2d,n-d)$. Put differently:
\[
\forall m\in\N, \qquad \max_{d\in \N} \{H(d+m,d)\} = H(2m,m).
\] 
\end{lemma}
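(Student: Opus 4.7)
The plan is to prove $H(n,d)\le H(2n-2d, n-d)$; setting $m:=n-d$, this reads $H(d+m,d)\le H(2m,m)$, and the ``max'' reformulation then follows (the value $H(2m,m)$ is realized by the left-hand side at $d=m$). First I would reduce to simple polytopes by generic perturbation of the defining inequalities, which never decreases the diameter.

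The core construction is the \emph{wedge} of a simple $d$-polytope $P$ over a facet $F=\{\phi=0\}\cap P$, where $\phi\ge 0$ is an affine functional:
\[
W_F(P):=\{(x,t)\in\R^{d+1} : x\in P,\ 0\le t\le\phi(x)\}.
\]
This is a simple $(d+1)$-polytope with $n+1$ facets --- a ``bottom'' $\{t=0\}$, a ``top'' $\{t=\phi(x)\}$ (each combinatorially isomorphic to $P$), and one ``side'' facet per facet of $P$ other than $F$ --- so the excess $m = n - d$ is preserved by wedging. Each $v\notin F$ lifts to two vertices $(v,0),(v,\phi(v))$, and each $v\in F$ to one; edges of $W_F(P)$ are either \emph{horizontal} (lying above an edge of $P$) or \emph{vertical} (connecting the two lifts of some $v\notin F$). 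The essential observation is that every path in $W_F(P)$ projects to a walk in $P$ whose combinatorial length equals the number of horizontal edges used, so that for any two vertices $u,v$ of $P$ and any valid choice of lifts $\tilde u,\tilde v$,
\[
d_{W_F(P)}(\tilde u,\tilde v)\ \ge\ d_P(u,v).
\]
Applied to a diameter-realizing pair this gives $H(n+1,d+1)\ge H(n,d)$ unconditionally. Iterating the wedge $m-d$ times on a simple $d$-polytope achieving $H(d+m,d)$ then produces a simple $m$-polytope with $2m$ facets of diameter at least $H(d+m,d)$, establishing the inequality for all $d\le m$.

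For $d>m$ (equivalently $n<2d$), wedging alone is not enough. Instead I would argue by a face-reduction: each vertex of the simple $d$-polytope $P$ lies on exactly $d$ facets out of $n<2d$, so by pigeonhole any two vertices $u,v$ share at least $2d-n=d-m>0$ common facets. Choosing any $d-m$ of those shared facets and intersecting them yields a face $F^*\ni u,v$ of dimension exactly $m$ with at most $n-(d-m)=2m$ facets. Since $F^*$ is itself a simple $m$-polytope and every path in $F^*$ is a path in $P$,
\[
d_P(u,v)\ \le\ d_{F^*}(u,v)\ \le\ H(|\mathrm{facets}(F^*)|,\,m)\ \le\ H(2m,m),
\]
the last step by monotonicity of $H(\cdot,m)$ in the number of facets.

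The main obstacle I expect is the careful verification of the wedge-projection argument in full generality --- in particular when the chosen facet $F$ happens to contain one or both of the witnessing vertices. The right identification of horizontal versus vertical edges makes this essentially automatic, but it does need to be set up precisely before the projection inequality can be invoked.
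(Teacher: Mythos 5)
Your proposal is correct and follows essentially the same route as the paper: the case $n>2d$ is handled there by the one-point suspension, which is exactly the polar dual of your wedge (with the same edge-contraction/projection argument), and the case $n<2d$ by passing to a common facet one dimension at a time, which is your face-intersection step telescoped into an induction. The only ingredients you state more casually than they deserve --- the reduction to simple polytopes (done in the paper via ``pushing'' vertices) and the implicit monotonicity of $H(\cdot,m)$ in the number of facets --- are standard and are used by the paper at the same level of detail.
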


\begin{corollary}[{\emph{$d$-step Theorem}, Klee-Walkup~\cite[Theorem~2.5]{Klee:d-step}}]
\label{coro:dstep}
The following statements are equivalent:
\begin{enumerate}
\item $H(n,d)\le n-d$ for all $n$ and $d$. \hfill \emph{(Hirsch Conjecture)}
\item $H(2d,d)\le d$ for all $d$. \hfill \emph{($d$-step Conjecture)}
\end{enumerate}
\end{corollary}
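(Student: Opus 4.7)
My plan is to derive the Corollary directly from Lemma~\ref{lemma:dstep}, which is the real engine; the Corollary is essentially a repackaging of the Lemma's second (equivalent) formulation.

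First I would dispatch the direction (1)$\Rightarrow$(2), which requires no work: assuming $H(n,d)\le n-d$ for all admissible pairs, specialize to $n=2d$ to obtain $H(2d,d)\le d$, which is the $d$-step statement. This direction does not use the Lemma at all.

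The substantive direction is (2)$\Rightarrow$(1). Here the plan is to fix arbitrary integers $n>d$, set $m:=n-d$, and apply Lemma~\ref{lemma:dstep}. The Lemma gives
\[
H(n,d) \;\le\; H(2n-2d,\,n-d) \;=\; H(2m,m).
\]
The hypothesis (2), applied with dimension $m$ in place of $d$, yields $H(2m,m)\le m = n-d$. Chaining the two inequalities gives $H(n,d)\le n-d$, which is (1). Note the small subtlety that $m=n-d\ge 1$, so we are applying the $d$-step hypothesis in a legitimate dimension; the case $n\le d$ is vacuous for the diameter bound.

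The only non-routine ingredient is the Lemma itself, whose proof the paper defers to Section~\ref{sec:general-d-step}; in the present Corollary there is no obstacle to overcome beyond correctly bookkeeping the substitution $(n,d)\leadsto (2m,m)$ with $m=n-d$. Once the Lemma is granted, the two implications together establish the stated equivalence.
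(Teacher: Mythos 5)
Your proposal is correct and follows exactly the route the paper intends: the corollary is presented as an immediate consequence of Lemma~\ref{lemma:dstep}, with (1)$\Rightarrow$(2) by specialization to $n=2d$ and (2)$\Rightarrow$(1) by chaining $H(n,d)\le H(2m,m)\le m=n-d$ for $m=n-d$. Nothing is missing.
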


The Hirsch conjecture holds for $n\le d+6$: Klee and Walkup proved the $d$-step conjecture for $d\le 5$, and the case $d=6$ has recently been verified by Bremner and Schewe~\cite{Bremner:DiameterFewFacets}. In a previous paper~\cite{Klee:PathsII}, Klee had shown the Hirsch conjecture for $d=3$. Together with the cases $(n,d)\in\{(11,4), (12,4)\}$~\cite{Bremner:MoreBounds, Bremner:DiameterFewFacets}, these are all the parameters where the Hirsch conjecture is known to hold.

The best upper bounds we have for $H(n,d)$ in general are a \emph{quasi-polynomial} one by Kalai and Kleitman~\cite{Kalai:Quasi-polynomial} and one \emph{linear in fixed dimension} proved by Barnette and improved by Larman and then Barnette again~\cite{Barnette, Barnette2, Larman}. These bounds take the following form (the second one assumes $d\ge 3$):
\[
H(n,d)\le n^{\log_2(d) + 1},
\qquad
H(n,d)\le \frac{2^{d-2}}{3}n.
\]

In particular, no polynomial upper bound is known. Its existence is dubbed the \emph{polynomial Hirsch conjecture}:

\begin{conjecture}[Polynomial Hirsch Conjecture]
\label{conj:pyolyhirsch}
There is a polynomial $f(n)$ such that the diameter of every polytope with $n$ facets is bounded above by $f(n)$.
\end{conjecture}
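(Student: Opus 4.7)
The final statement is the \emph{Polynomial Hirsch Conjecture}, which is a famous open problem rather than a theorem with a known proof. Any ``proof proposal'' is therefore really a sketch of attack strategies, with the understanding that all existing strategies have, so far, fallen short. The most developed approach is the recursive/combinatorial one pioneered by Kalai and Kleitman. The plan would be to look for a tighter version of their argument: given a polytope $P$ of dimension $d$ with $n$ facets and two vertices $u,v$, one collects the facets into two roughly balanced ``layers'' according to their graph distance from $u$, picks a facet $F$ meeting the ball at level $\lfloor n/2\rfloor$, and recurses inside $F$ to bound the distance from $u$ to a vertex of $F$ and symmetrically from $v$ to a vertex of a facet near the other end. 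This yields a recursion of the form $H(n,d)\le 2H(\lfloor n/2\rfloor,d-1)+\text{small terms}$, whose solution is the quasi-polynomial bound $n^{\log_2 d+1}$ already mentioned in the excerpt. To turn this into a polynomial bound, one would need to save a factor of $d$ (or at least a factor growing with $d$) at each step of the recursion.

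First, I would try to exploit the fact that the two recursive subproblems live inside facets of the \emph{same} polytope, which means their combinatorics are strongly correlated; for instance, adjacent facets share a ridge, so the shortest paths in one constrain those in the other. If one could show that many of the pivot steps used in one recursive call can be reused in the other, the $2$ in front of the recursion could be replaced by $1+o(1)$, producing a polynomial bound. Second, I would try to upgrade the ``balanced partition of facets'' step: instead of using the naive distance-ball partition, use a geometric partition coming from a random hyperplane sweep, and argue via convexity that most vertices are encountered early.

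The main obstacle, however, is fundamental. The Kalai--Kleitman recursion is valid not only for polytopes but for the far more general abstract setting of connected layer families (or base-exchange combinatorial manifolds), and in that abstract setting superlinear, even quasi-polynomial, lower bounds on the ``diameter'' are known. So no purely combinatorial recursion of Kalai--Kleitman type, unaugmented by genuinely geometric input, can deliver a polynomial bound. Thus any serious attack must at some point use properties of polytopes that are lost in the abstract combinatorial model: convexity of faces, the rigidity of the normal fan, Cayley--Menger or metric constraints, or non-negativity of $g$-vectors.

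In short, my plan would be to set up the Kalai--Kleitman recursion, try to exchange the factor of $2$ for something smaller by coupling the two recursive subproblems geometrically, and feed in convexity-based estimates to close the gap. The hard part, and the reason this problem remains open, is that no one yet knows how to translate convexity into a quantitative improvement over the abstract recursion, and the counterexample constructed in this paper shows that even the linear bound $n-d$ fails, so polytopes have more combinatorial freedom than once believed.
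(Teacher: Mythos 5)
This statement is a conjecture, not a theorem: the paper offers no proof of it (and none exists --- it remains open), so there is nothing to compare your attempt against. You have correctly recognized this and given an honest survey of attack strategies rather than a purported proof, which is the right response here.
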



%
Of course, apart of its central role in polytope theory, the significance of this conjecture is that 
polynomial pivot rules cannot exist for the simplex method unless it holds.
For more information on these and other results see the chapter that Klee wrote in Gr\"unbaum's book~\cite{Klee-grunbaumchapter}, and the survey papers~\cite{Kim-Santos-update, Klee-Kleinschmidt}.

\subsection*{Our counterexample}
The $d$-step Theorem stated above implies that to prove or disprove the Hirsch conjecture there is no loss of generality in assuming $n=2d$. A second reduction is that, for every $n$ and $d$, the maximum of $H(n,d)$ is always achieved at a \emph{simple} polytope (a $d$-polytope in which every vertex belongs to exactly $d$ facets). Simple polytopes are especially relevant for linear programming: they are obtained when the system of inequalities is sufficiently generic.

The first ingredient in our proof is the observation that if we start with a polytope which is at the same time non-simple and has $n>2d$, then the techniques used in these two reductions can be combined to get a simple $(n-d)$-polytope with $2n-2d$ facets and not only maintain its diameter (or, to be more precise, the distance between two distinguished non-simple vertices), but actually \emph{increase it}. We call this the \emph{Strong $d$-step Theorem} and prove it in Section~\ref{sec:general-d-step}. Although the theorem can be stated more generally (see Remark~\ref{rem:strong-d-step}), the version we need has to do with the following class of polytopes:

\begin{definition}
\label{defi:spindle}
A $d$-\emph{spindle} is a $d$-polytope $P$ having two distinguished vertices $u$ and $v$ such that every facet of $P$ contains exactly one of them. See Figure~\ref{fig:spindle}. The \emph{length} of a spindle is the graph distance between $u$ and $v$.
\end{definition}

Equivalently, a spindle is  the intersection of two polyhedral convex cones with apices at $u$ and $v$ and with both their interiors containing the open segment $uv$.

\begin{figure}[htb]
\begin{center}
\input{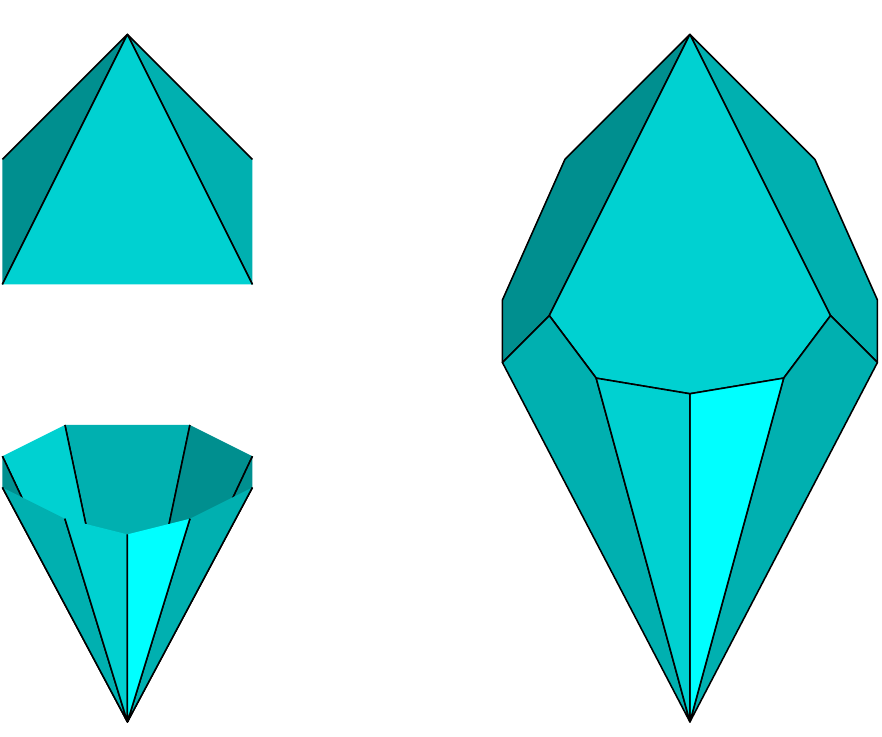_t}
\caption{A spindle. 
}
\label{fig:spindle}
\end{center}
\end{figure}

\begin{theorem}[Strong $d$-step Theorem for spindles]
\label{thm:dstep-spindle}
If $P$ is a spindle of dimension $d$, with $n$ facets and length $l$, then there is another spindle $P'$ of dimension $n-d$, with $2n-2d$ facets and with length at least $l+n-2d$.
In particular, if $l>d$ then $P'$ violates the $d$-step conjecture, hence also the Hirsch conjecture.
\end{theorem}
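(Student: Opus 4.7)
The plan is induction on $s := n - 2d \ge 0$ via a one-step dimension-raising operation. The base case $s = 0$ is immediate: when $n = 2d$, the split $a + b = n$ with $a, b \ge d$ (where $a, b$ denote the number of facets through $u$ and $v$ respectively) forces $a = b = d$, so both distinguished vertices of $P$ are already simple, and we take $P' = P$. The inductive step rests on the following one-step claim: if $P$ is a $d$-spindle with $n > 2d$ facets and length $l$, there exists a $(d+1)$-spindle $\tilde P$ with $n+1$ facets and length at least $l+1$. Iterating this claim $s$ times yields the desired $P'$ of dimension $n-d$, with $2n-2d$ facets and length at least $l + n - 2d$. The final assertion then falls out: $P'$ satisfies $n' = 2d'$ with $d' = n - d$, so if $l > d$ its length is at least $l + (n - 2d) > d + (n - 2d) = n - d = d'$, violating the $d$-step conjecture (and hence Hirsch).

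The heart of the proof is the one-step claim, which I would approach via a \emph{wedge}-like construction. Since $n > 2d$ and $a + b = n$ with $a, b \ge d$, at least one of $u, v$ is non-simple; WLOG $u$ lies on $a \ge d+1$ facets. I would pick a facet $F$ of $P$ containing $u$ and form the wedge $W = \operatorname{wedge}_F(P)$, a $(d+1)$-polytope with $n+1$ facets in which $u$ persists as a single vertex on the seam (the image of $F$) while $v$ lifts to two copies $v_+$ and $v_-$ in the ceiling and floor. The new distinguished pair will be $(v_+, v_-)$, and the goal is to argue that (a suitable version of) $W$ is a spindle with respect to this pair and that $d_W(v_+, v_-) \ge l + 1$.

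The principal obstacle is that the raw wedge $W$ is \emph{not} literally a spindle with $(v_+, v_-)$, and its natural length bound falls short of $l + 1$. Indeed, a facet of the form $\operatorname{wedge}_{F \cap G}(G)$ with $G \ne F$ contains both $v_\pm$ when $v \in G$ and neither when $u \in G$, breaking the spindle property; and any shortest path from $v_+$ to $v_-$ must cross the seam at some vertex $w \in F \cap V(P)$, giving only $d_W(v_+, v_-) \ge 2 \, d_P(v, F)$, which can be small if $F$ happens to contain a neighbor of $v$. Overcoming both issues --- by exploiting the non-simpleness of $u$ in the choice of $F$, by perturbing $W$ so that the problematic facets split into two pieces separating $v_+$ from $v_-$, or by a more elaborate dimension-raising construction that combines wedging with additional operations --- is the central technical content, and where I would invest the bulk of the effort. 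Once the one-step lemma is in hand, the iteration and the concluding Hirsch violation are routine.
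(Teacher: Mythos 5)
Your global architecture coincides with the paper's: induct on $s=n-2d$, reduce everything to a one-step lemma (from a $d$-spindle with $n>2d$ facets and length $l$, produce a $(d+1)$-spindle with $n+1$ facets and length $\ge l+1$), and your base case and final deduction are correct. The problem is that the one-step lemma \emph{is} the theorem, and the construction you sketch for it is not merely incomplete but pointed the wrong way. You wedge over a facet $F$ containing the non-simple vertex $u$ and propose the two copies $v_+,v_-$ of the \emph{other} apex as the new distinguished pair. First, a factual error: in the wedge, the two copies of any vertex not on $F$ are joined by a vertical edge (the wedge of that vertex over the empty face), so $v_+v_-$ is an \emph{edge} of $W$ and $d_W(v_+,v_-)=1$; your claim that every $v_+$--$v_-$ path must cross the seam, and the bound $d_W(v_+,v_-)\ge 2\,\operatorname{dist}_P(v,F)$, are both false. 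Second, and more fundamentally, the pair $(v_+,v_-)$ has no memory of the quantity $l=\operatorname{dist}_P(u,v)$ that the induction must preserve, so no perturbation of $W$ can be expected to recover a length of $l+1$ from it. This is not a technical obstacle to be overcome by ``investing effort''; the choice of wedge facet and of the new apex pair has to be reversed.

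What the paper actually does (in the polar, prismatoid language, where the wedge is a one-point-suspension and the perturbation is an honest vertex push, which is far easier to control than tilting facet hyperplanes): wedge over a facet containing $v$, so that $v$ survives as a single vertex $\tilde v$ on the seam while the non-simple vertex $u$ splits into two \emph{adjacent} copies $u_1,u_2$, each still at distance $\ge l$ from $\tilde v$ by the elementary one-point-suspension lemma (dual graph of the suspension contracts onto that of $Q$). The wedge is then a ``spindle with the edge $u_1u_2$ as one apex'': every facet contains either $\tilde v$ or that edge. Because $u$ is non-simple, a small generic perturbation (in the polar: pushing one vertex of the non-simplex ridge $Q^+$ in the direction of the suspension) creates a genuine new apex and restores the spindle property, and the extra unit of length comes from showing that the first step of any path leaving the new apex lands on a facet/vertex that is still at distance $\ge l$ from $\tilde v$. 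Both the restored spindle property and this ``first step costs one'' argument require the non-simplicity hypothesis and a genericity condition on the perturbed vertex; none of this is carried out in your proposal, so the proof has a genuine, central gap.
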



The second ingredient in our disproof is the explicit construction of a spindle of dimension five and length six, that we describe in Section~\ref{sec:symmetry}:

\begin{theorem}
\label{thm:spindle}
There is a $5$-dimensional spindle (with $48$ facets and $322$ vertices) of length six.
\end{theorem}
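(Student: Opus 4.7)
My plan is to construct the spindle $P$ explicitly as the intersection of two polyhedral cones $Q_u, Q_v \subset \R^5$ with apices $u$ and $v$, exploiting a rich symmetry group so that the 48 facets come in just a few orbits. The first step is to place $u$ and $v$ antipodally so that the central involution $\sigma(x) = -x$ swaps them, and to fix a finite subgroup $G$ of the orthogonal group stabilizing $\{u,v\}$ under which the whole construction will be invariant. I would then select a short list of linear functionals whose $G$-orbit cuts out one of the cones and take the $\sigma$-image to cut out the other. Since a spindle requires every facet to contain exactly one of $\{u,v\}$, I would tune the functionals so each is tight at $u$ but strict at $v$ (and symmetrically), which is a condition on only one representative per $G$-orbit.

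Given a candidate $P$, checking the spindle property then reduces to a handful of orbit-representative computations: confirm tightness at one apex and strict inequality at the other for each generating inequality. The $322$ vertices will organize themselves into $G$-orbits as intersections of facet subsets; the vertex figures at $u$ and $v$, and the local graph around $u$, can be described via much smaller quotient objects. This symmetric presentation is what will make the remaining combinatorial verification feasible by hand or computer.

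The critical step is to show $d_P(u,v) = 6$. The upper bound is exhibited by one concrete edge path $u = w_0, w_1, \dots, w_6 = v$, which I would display together with its facet-incidences. For the lower bound I would run a breadth-first search from $u$, working modulo the stabilizer of $u$ in $G$ to keep the branching small, and verify that no vertex adjacent to $v$ is reached in fewer than $5$ steps; equivalently, that the $G$-orbits of vertex-neighborhoods of $u$ and of $v$ remain separated through five breadth-first layers. The main obstacle is really the design phase, not the verification: in dimension $5$ one must engineer enough "barriers" between the $u$-side and the $v$-side of the facet lattice so that every geodesic is forced to traverse six edges, while keeping the facet count as low as $48$ and preserving the clean bipartition of facets needed for the spindle property. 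The role of the symmetry group is precisely to cut the design space down to something one can search and certify.
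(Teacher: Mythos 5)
Your proposal is a strategy for verifying a spindle once one is in hand, but it never produces the spindle, and producing it is the entire content of the theorem. You correctly observe that ``the main obstacle is really the design phase, not the verification''---and then you stop precisely at the design phase: no coordinates, no list of facet-defining functionals, no concrete symmetry group is specified. An existence statement of this kind cannot be established by describing how one would check a candidate. The paper's proof consists of an explicit list of $48$ points in $\R^5$ (the vertices of the polar prismatoid $Q$, Table~\ref{table:prismatoid}), an explicit list of its $322$ facets grouped into six orbits under a symmetry group of order $64$ (Table~\ref{table:prsimatoid-facets}), and a hand-checkable computation of the orbit-adjacency graph from which the distance six is read off. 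Without the analogous explicit data, your argument establishes nothing.

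On the method itself, your outline is close in spirit to the paper's first proof, with two caveats. First, the paper works in the polar setting: a \emph{prismatoid} with two parallel base facets containing all $48$ vertices, whose $322$ facets are the vertices of your spindle. This makes the symmetry group and the facet/vertex bookkeeping much cleaner; in your primal cone-intersection picture you would still need the full vertex set of $P$ (equivalently the full facet list of $Q$) before any breadth-first search can be run, and enumerating those $322$ vertices and their adjacencies \emph{is} the verification work, not a detail. Second, you propose placing $u$ and $v$ antipodally so that $x\mapsto -x$ swaps them. In the paper's example the symmetry exchanging the two halves is $(x_1,x_2,x_3,x_4,x_5)\mapsto(x_4,x_3,x_1,x_2,-x_5)$, which is explicitly \emph{not} an involution; the coordinate twist between the two halves is what makes the two normal maps on $S^3$ interleave badly enough (every vertex of each map buried in the interior of a facet of the other) to force six steps. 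Insisting on central symmetry is an extra constraint you have not shown to be satisfiable, so even your design template may be steering you away from a working example rather than toward one.
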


That our spindle has length six is easy to verify computationally.
Still, we include two computer-free proofs in Sections~\ref{sec:prismatoid} and~\ref{sec:sphere-maps}. 
Putting Theorems~\ref{thm:dstep-spindle} and~\ref{thm:spindle} together we get:

\begin{corollary}
\label{coro:nonHirsch}
There is a non-Hirsch polytope of dimension $43$ with $86$ facets.
\end{corollary}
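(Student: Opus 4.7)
The plan is simply to apply the Strong $d$-step Theorem for spindles (Theorem~\ref{thm:dstep-spindle}) to the explicit spindle provided by Theorem~\ref{thm:spindle}, and to check that the resulting parameters violate the Hirsch bound.

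First I would set up the input: take $P$ to be the $5$-dimensional spindle of Theorem~\ref{thm:spindle}, so $d=5$, $n=48$, and length $l=6$. Note that $l=6>5=d$, which is the crucial hypothesis of the second half of Theorem~\ref{thm:dstep-spindle}. Next I would feed these values into Theorem~\ref{thm:dstep-spindle}, obtaining a spindle $P'$ of dimension $n-d=43$, with $2n-2d=86$ facets, and of length at least $l+n-2d=6+48-10=44$. Since the Hirsch bound for a $43$-dimensional polytope with $86$ facets is $86-43=43$, and the graph distance between the two distinguished vertices $u$ and $v$ of $P'$ is at least $44>43$, the polytope $P'$ is non-Hirsch, which establishes the corollary.

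The statement is essentially a routine specialization, so there is no real obstacle at this stage: all the combinatorial heavy lifting is packaged into Theorems~\ref{thm:dstep-spindle} and~\ref{thm:spindle}. The only thing to be careful about is the arithmetic and the interpretation of ``length'' as a lower bound for the combinatorial diameter of $P'$ (it is the distance between two specific vertices, so certainly at most the diameter). The genuinely nontrivial work lies elsewhere in the paper, namely in proving the Strong $d$-step Theorem (which turns a non-simple spindle with $n>2d$ into a simple one with $n=2d$ while not decreasing the distance between the two apices) and in producing the $5$-dimensional, length-$6$ spindle whose existence is asserted in Theorem~\ref{thm:spindle}.
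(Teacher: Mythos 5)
Your proposal is correct and matches the paper's argument exactly: the corollary is obtained by substituting $d=5$, $n=48$, $l=6$ from Theorem~\ref{thm:spindle} into Theorem~\ref{thm:dstep-spindle}, yielding a $43$-dimensional spindle with $86$ facets whose two apices are at distance at least $44>43=86-43$. The arithmetic and the observation that the length lower-bounds the diameter are all that is needed, just as in the paper.
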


Section~\ref{sec:asymptotic} is devoted to show how to derive an infinite family of non-Hirsch polytopes from the first one:

\begin{theorem}
\label{thm:asymptotic}
There is a fixed dimension $d$, a positive $\epsilon>0$, and an infinite family of $d$-polytopes $P_k$ each with $n_k$ facets and with diameter bigger than $(1+\epsilon)n_k$.
\end{theorem}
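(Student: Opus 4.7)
My plan is to construct the family $P_k$ by applying two standard polytope operations to the $43$-dimensional non-Hirsch polytope $P$ from Corollary~\ref{coro:nonHirsch} (with $86$ facets and diameter at least $44$): Cartesian products and glueings along common facets.

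I would first observe that Cartesian products add both the number of facets and the graph diameter: the $k$-fold power $P^k$ is thus a $43k$-dimensional polytope with $86k$ facets and diameter at least $44k$. Modulo the growing dimension, this already yields an infinite family of polytopes whose diameter exceeds the Hirsch bound $n-d$ by a constant multiplicative factor of $44/43$, so the Hirsch excess grows linearly with $k$.

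To produce an infinite family in a fixed dimension, I would fix some large value $k_0$ and glue together $l$ copies of $P^{k_0}$ in a chain along common facets. Each glueing preserves the dimension (which remains $43 k_0$) while removing two facets from the total count, so the resulting polytope has $l \cdot 86 k_0 - 2(l-1)$ facets. Provided the glueing facets are chosen in such a way that no shortcut is created between the two distinguished extremal vertices of each copy, the diameter of the chain is at least $l \cdot 44 k_0$. Letting $l \to \infty$ produces an infinite family in fixed dimension $d = 43 k_0$ whose diameter-to-facet ratio, after optimizing $k_0$ and the choice of glueing pattern, should exceed $1+\epsilon$ for some $\epsilon > 0$, yielding the claim.

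The main technical obstacle is the diameter analysis of the glued polytopes: exhibiting glueing facets of $P^{k_0}$ that avoid creating shortcut paths between the two extremal vertices. For this I would exploit the \emph{spindle structure} of $P^{k_0}$ inherited through the Strong $d$-step Theorem---namely, that every facet of $P^{k_0}$ contains exactly one of the two distinguished extremal vertices---to choose glueing facets whose vertex-sets are combinatorially far from both extremes, so that any shortest path in the chain is forced to traverse each copy of $P^{k_0}$ essentially in full. Showing that such facets exist, and verifying that the optimization over $k_0$ does yield a constant ratio strictly greater than $1$, is where the real work of the proof lies.
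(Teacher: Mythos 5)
Your overall strategy---take a high power $P^{k_0}$ of the $43$-dimensional non-Hirsch polytope of Corollary~\ref{coro:nonHirsch} to reach a fixed dimension, then glue an unbounded number of copies together in that dimension---is exactly the paper's (Lemma~\ref{lemma:product}, Lemma~\ref{lemma:glue}, Corollary~\ref{coro:glue} and Theorem~\ref{thm:asymptotic2}). But there is a quantitative error in your gluing step that is fatal to the argument as you state it: you claim each gluing ``removes two facets from the total count.'' The gluing of two simple $d$-polytopes that has the right diameter behavior (Holt--Klee; Lemma~\ref{lemma:glue}) produces a polytope with $n_1+n_2-d$ facets, not $n_1+n_2-2$: one glues along a simplex facet (obtained, e.g., by truncating a vertex of each copy), and convexity of the union forces the $d$ facets adjacent to the glued simplex on one side to merge in pairs with the $d$ facets adjacent to it on the other side. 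This difference is load-bearing. With your count, $l$ copies of $P^{k_0}$ give at least $86k_0l-2(l-1)$ facets and diameter at most about $44k_0l$, so the ratio of diameter to number of facets tends to $\frac{44k_0}{86k_0-2}<1$ for \emph{every} $k_0$; no optimization over $k_0$ or over the gluing pattern can then produce $\epsilon>0$. With the correct count the facet number is $43k_0(l+1)$ and the diameter is at least $l(44k_0-1)+1$, so the ratio tends to $\frac{44k_0-1}{43k_0}>1$ as soon as $k_0\ge 2$, giving $\epsilon\simeq 1/86$ in dimension $86$.

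Two smaller points. First, each gluing costs $1$ in the diameter (the bound is $l_1+l_2-1$, not $l_1+l_2$); this is precisely why the preliminary product is needed: $P$ itself exceeds the Hirsch bound by only $1$, so gluing copies of $P$ directly would yield diameter exactly equal to the Hirsch bound, whereas $P^{k_0}$ exceeds it by $k_0$ and can absorb the loss. Second, the ``real work'' you anticipate---choosing gluing facets so that no shortcut arises---is the content of the standard gluing lemma for simple polytopes (one blends at vertices realizing the diameters) and does not require the spindle structure of $P^{k_0}$; what you do need to say is that $P$ may first be replaced by a simple polytope of the same dimension and facet number and no smaller diameter (the polar of the pushing argument of Lemma~\ref{lemma:pushing}), since the gluing lemma is stated for simple polytopes.
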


For example, from the non-Hirsch polytope in this paper we get  $\epsilon \simeq1/86$ in dimension $86$ and  $\epsilon \simeq 1/43$ in high $d$. 
With the one announced in~\cite{5prismatoids} (see Theorem~\ref{thm:5prismatoids} below) one gets  $\epsilon \simeq1/40$ in dimension $40$ and  $\epsilon \simeq 1/20$ in high $d$. 

\subsection*{Discussion}

Our counterexample disproves as a by-product the following two statements, 
originally posed in the hope of shedding light on the Hirsch conjecture:
\begin{enumerate}
\item Provan and Billera~\cite{ProvanBillera} introduced the hierarchy of $k$-decomposable simplicial complexes: $k$-decomposability is stronger than $(k+1)$-decomposability for every $k$ and the boundary of every simplicial $d$-polytope is $(d-1)$-decomposable (or \emph{shellable}). 
They also showed that $0$-decomposable (or \emph{vertex decomposable}) complexes satisfy (the polar of)  the Hirsch conjecture. Non-vertex-decomposable polytopes were previously found by Kleinsch\-midt~\cite[p.~742]{Klee-Kleinschmidt}, but it would be interesting to explore
whether our non-Hirsch polytope, besides not being $0$-decomposable, fails also to be $1$-decomposable (or higher).

\item Todd~\cite{Todd:MonotonicBoundedHirsch} showed that from any unbounded non-Hirsch polyhedron, such as the one previously found by Klee and Walkup, one can easily obtain a counterexample to the so-called \emph{monotone Hirsch conjecture}. Still, the \emph{strict monotone Hirsch conjecture} of Ziegler~\cite{Ziegler:LecturesPolytopes}, stronger than the Hirsch conjecture, was open.
\end{enumerate}

Still, our techniques leave the underlying problem--how large can the diameter of a polytope be--almost as open as it was before. In particular, we cannot answer the following question:

\begin{question}
Is there a constant $c$ (independent of $d$) such that the diameter of every $d$-polytope with $n$ facets is bounded above by $cn$?
\end{question}

We suspect the answer to be negative, but our lack of knowledge somehow confirms the following sentence from~\cite{Klee-Kleinschmidt}:
\emph{Finding a counterexample will be merely a small first step in the line of investigation related to the conjecture.}

Another ``small step''  has recently been given by F.~Eisenbrand, N.~H\"ahnle, A.~Razborov, and T.~Rothvo\ss~\cite{Eisenbrand:limits-of-abstraction}, with the introduction of certain abstract generalizations of boundary complexes of polyhedra and the construction of objects of super-linear diameter in this generalized setting. By further analyzing this setting, N.~H\"ahnle has posed the following tempting and more explicit version of Conjecture~\ref{conj:pyolyhirsch}:

\begin{conjecture}[\cite{Hahnle:polymath3}]
\label{conj:haehnle}
The diameter of every $d$-polytope with $n$ facets is bounded above by $dn$.
\end{conjecture}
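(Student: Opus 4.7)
The plan is to attack this by induction on the dimension $d$, refining the Barnette--Larman argument so that the multiplicative factor grows linearly rather than exponentially in $d$. Given a $d$-polytope $P$ with $n$ facets and vertices $u,v$ at maximum distance, I would first fix a facet $F$ containing $u$ and set up a layered exploration as in Kalai--Kleitman: let $L_k$ denote the union of facets reachable from $F$ in at most $k$ steps in the dual graph, and let $f_k$ be the number of facets in $L_k \setminus L_{k-1}$. By induction, within any facet one can traverse its graph in at most $(d-1)$ times its number of facets. So the contribution of level $k$ to the path length should be bounded by $(d-1)f_k$, and summing over all levels yields $(d-1)n$ for the $u$-side. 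A symmetric argument from $v$ meeting in the middle would then give the bound $dn$, provided the meet-in-the-middle step costs only $n$ extra edges.

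The key combinatorial step to verify is that between consecutive levels $L_{k-1}$ and $L_k$ one can move from any vertex of $L_{k-1}$ onto some vertex of a newly absorbed facet without leaving $L_k$ and in at most $(d-1)|F_{\text{new}}|$ steps, where $|F_{\text{new}}|$ is the facet count of the newly absorbed facet. This should follow from the inductive hypothesis applied on the common ridge together with an averaging argument that charges each edge of the traversed path to a distinct facet of the newly absorbed region. The other step to verify is that the layers $L_k$ collectively exhaust all facets within $n$ iterations, which comes from the fact that every facet of $P$ lies in some $L_k$ with $k\le n$, since the dual graph is connected.

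The main obstacle is severe and, honestly, I expect it to block any naive implementation of this plan. Eisenbrand, H\"ahnle, Razborov and Rothvo\ss{} \cite{Eisenbrand:limits-of-abstraction} have shown superlinear lower bounds for the diameter of the purely combinatorial objects (connected layer families) that underlie both the Kalai--Kleitman and Barnette--Larman arguments. Consequently, any proof of a $dn$ bound must use a geometric or convex-geometric property of polytopes that fails for the abstract analogues---for instance, the existence of a \emph{linear} objective function whose level sets induce a coherent orientation of the graph, or the strong form of the upper bound theorem for the $f$-vector. Identifying the right polytopal feature to inject into the inductive step, and making it compatible with the layer decomposition, is where I expect the real work to lie; in the absence of such an idea, the plan above only recovers a Barnette--Larman-type bound with an exponential factor in $d$, falling short of H\"ahnle's conjectured linear dependence.
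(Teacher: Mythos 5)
This statement is an open conjecture in the paper (H\"ahnle's conjecture, attributed to a post in the Polymath~3 project); the paper offers no proof of it, and to date none is known. So there is no argument of the author's to compare yours against, and any complete proof you supplied would be a new result rather than a reconstruction.

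As for your proposal itself: the decisive gap is exactly where you suspect, but it is worth naming precisely. Your claim that ``the contribution of level $k$ to the path length is bounded by $(d-1)f_k$, and summing over all levels yields $(d-1)n$'' presupposes that the facets consumed in traversing level $k$ are disjoint from those consumed at other levels, and that within a level the inductive bound can be charged to the \emph{newly absorbed} facets only. Neither holds. When you walk inside a facet $F'$ of $P$ to cross from layer $L_{k-1}$ to $L_k$, the inductive hypothesis bounds that walk by $(d-1)$ times the number of facets \emph{of $F'$} (i.e.\ ridges of $P$), not by $(d-1)$ times the number of facets of $P$ newly discovered at step $k$; and the same facet of $P$ contributes ridges to walks performed at many different levels. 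This is why the Barnette--Larman recursion composes multiplicatively across dimensions, producing the $\frac{2^{d-2}}{3}n$ bound rather than anything linear in $d$. Your closing observation is the correct diagnosis: the result of Eisenbrand, H\"ahnle, Razborov and Rothvo\ss\ shows that connected-layer-family arguments of this shape cannot, even in principle, yield a bound of the form $dn$, so any proof must inject a genuinely geometric property of polytopes that the abstraction lacks. No such property is identified in your plan, and identifying one is the open problem, not a detail to be filled in.
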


Thanks to Remark~\ref{rem:mcmullen} this conjecture is (almost) equivalent to:

\begin{conjecture}
\label{conj:haehnle-2}
The diameter of every $d$-polytope with $n$ facets is bounded above by $d(n-d)$.
\end{conjecture}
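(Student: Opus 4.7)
The plan is a double induction on $d$ and on $n-d$, reducing the global diameter bound to a local ``meet in a common facet'' lemma. The base cases are $n-d=1$ (a simplex has diameter $1$), $d=2$ (polygons), and $d=3$ (Klee's bound $H(n,3)\le\lfloor 2n/3\rfloor-1\le 3(n-3)$); reducing to the simple-polytope case is standard via perturbation. So assume $P$ is a simple $d$-polytope with $n$ facets, $d\ge 4$, $n-d\ge 2$. For any facet $F$ of $P$, viewed as a $(d-1)$-polytope with at most $n-1$ facets, induction on $d$ gives $\diam(F)\le (d-1)(n-d)$. Fix vertices $u,v$ and write $\d_P(w,F)$ for the minimum graph-distance from $w$ to any vertex of $F$. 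Then
\[
\d_P(u,v)\;\le\;\d_P(u,F)+\d_P(v,F)+(d-1)(n-d),
\]
so the induction closes provided there is a facet $F$ with $\d_P(u,F)+\d_P(v,F)\le n-d$.

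\medskip
\emph{Key Lemma (target).} In every simple $d$-polytope with $n$ facets, every pair of vertices $u,v$ admits a facet $F$ satisfying $\d_P(u,F)+\d_P(v,F)\le n-d$.
\medskip

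My approach to the Key Lemma is via facet-ball growth. Let $B_k(w)$ denote the set of facets that contain some vertex at graph-distance at most $k$ from $w$. In a simple polytope each vertex lies in exactly $d$ facets, and each edge crossing exchanges one facet for another, so the crude bound $|B_k(w)|\le d+k$ holds. To prove the lemma I would instead want the matching \emph{lower} bound $|B_k(w)|\ge d+k\cdot r(n,d)$ for an expansion rate $r(n,d)$ bounded below by a constant fraction of $d/(n-d)$: for then choosing $a+b=n-d$ yields $|B_a(u)|+|B_b(v)|>n$, forcing a common facet $F\in B_a(u)\cap B_b(v)$ at combined distance $a+b\le n-d$, as required. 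The expansion rate $r(n,d)$ would be harvested from shellability, or ideally $0$-decomposability, of the dual boundary complex in the spirit of Provan--Billera: each new facet added to $B_k$ by a pivot forces, through the facial structure, a cascade of further new facets to be reachable within a bounded number of subsequent pivots.

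The main obstacle is establishing this expansion rate for $|B_k|$; the naive $|B_k|\le d+k$ is essentially the only general estimate currently available, and improving it calls for genuinely global information about simple polytopes, which is precisely what resists all known attacks on the polynomial Hirsch conjecture. I would first test the approach in the $n=2d$ regime, where via the Strong $d$-step Theorem the Key Lemma specializes to showing that any two vertices share a facet within combined distance $d$, equivalently $H(2d,d)\le d^2$; a successful proof there would both provide a template and give evidence that the claimed expansion estimate is of the correct order. The outer induction and the reduction to the Key Lemma are routine; the entire difficulty of the conjecture is concentrated in the expansion step.
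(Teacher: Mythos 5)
You have set out to prove a statement that the paper does not prove and that nobody has proved: it appears there only as Conjecture~\ref{conj:haehnle-2}, an open strengthening of the polynomial Hirsch conjecture (essentially H\"ahnle's Conjecture~\ref{conj:haehnle}, via Remark~\ref{rem:mcmullen}). The best bounds actually known are the quasi-polynomial Kalai--Kleitman bound $n^{\log_2(d)+1}$ and the Barnette--Larman bound $\frac{2^{d-2}}{3}n$, both far from $d(n-d)$, so any complete argument would be a major new theorem rather than a reconstruction of anything in the paper. Your proposal is, by your own admission, not such an argument: the outer double induction and the inequality $\d_P(u,v)\le \d_P(u,F)+\d_P(v,F)+(d-1)(n-d)$ are correct but purely bookkeeping, and they transfer the entire content of the conjecture into the unproven ``Key Lemma'' that some facet $F$ satisfies $\d_P(u,F)+\d_P(v,F)\le n-d$. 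No proof of that lemma is offered, only a hoped-for expansion estimate for facet-balls which you concede ``resists all known attacks''; reducing an open conjecture to an equally open statement is a genuine gap, not a proof.

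Even at the level of the sketch there are concrete problems. The asserted crude bound $|B_k(w)|\le d+k$ is false: in a simple $d$-polytope the $d$ neighbors of $w$ can each contribute a distinct new facet (for the $3$-cube, $|B_1(w)|=6>d+1$), so the heuristic calibration of the ``expansion rate'' starts from a wrong baseline. More seriously, the pigeonhole step does not close even if the hoped-for lower bound held: with $a+b=n-d$ and $r(n,d)\ge c\,d/(n-d)$ for a constant $c$, you only get $|B_a(u)|+|B_b(v)|\ge (2+c)d$, which exceeds $n$ only when $n<(2+c)d$; for $n\gg d$ the argument fails outright, so you would additionally need a reduction to the regime $n\le 2d$, and the $d$-step-type reductions (Lemma~\ref{lemma:dstep}, Theorem~\ref{thm:dstep-prismatoid}) change both $d$ and $n-d$ and therefore do not preserve a bound of the specific form $d(n-d)$. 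Finally, be aware that the paper's own results cut against the plausibility of easy local arguments here: the counterexample of Corollary~\ref{coro:nonHirsch} shows that distances can exceed $n-d$, so the Key Lemma cannot be deduced from any Hirsch-type intuition and would itself need genuinely new global input.
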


To finish, let us mention two additional results that were obtained after the first version of this paper was made public:
%
%
On the one hand, relying on the reductions that we introduce in Section~\ref{sec:maps},
Santos, Stephen and Thomas~\cite{4prismatoids} have shown that all $4$-spindles have length at most $4$~\cite{4prismatoids}. Hence, spindles of dimension five are truly needed to obtain non-Hirsch polytopes via Theorem~\ref{thm:dstep-spindle}.
%
On the other hand, Matschke, Santos and Weibel~\cite{5prismatoids} have constructed a $5$-spindle of length $6$ with only $25$ facets, from which Theorem~\ref{thm:spindle} gives:

\begin{theorem}[Matschke, Santos, Weibel~\cite{5prismatoids}]
\label{thm:5prismatoids}
There is a non-Hirsch polytope of dimension $20$ with $40$ facets and $36\,442$ vertices. It has diameter $21$.
\end{theorem}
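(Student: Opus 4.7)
The plan is a direct application of the Strong $d$-step Theorem for spindles (Theorem~\ref{thm:dstep-spindle}) to the smaller $5$-spindle of~\cite{5prismatoids}. First, I would invoke~\cite{5prismatoids} to obtain a $5$-dimensional spindle $Q$ with $25$ facets and length $6$. Second, I would apply Theorem~\ref{thm:dstep-spindle} to $Q$ with parameters $d=5$, $n=25$, $l=6$. The theorem produces a new spindle $P'$ of dimension $n-d=20$, with $2n-2d=40$ facets, and of length at least $l+n-2d = 6+25-10 = 21$. Since the Hirsch bound for a $20$-polytope with $40$ facets is $n-d=20$, the graph distance between the two apices of $P'$ already beats it by at least one, so $P'$ is non-Hirsch and its diameter is at least $21$.

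This already yields a non-Hirsch polytope of the stated dimension and number of facets. To obtain the exact diameter $21$ (and not more) and the vertex count $36\,442$ that appear in the statement, one still needs a matching upper bound on the diameter together with an enumeration of the vertices of $P'$. Both are properties of the specific polytope produced by the construction, and can be read off from the explicit coordinates of $Q$ given in~\cite{5prismatoids} combined with the (constructive) proof of Theorem~\ref{thm:dstep-spindle}: the reduction spindle-to-simple-spindle is explicit enough that $P'$ can be written down and its $f$-vector and diameter computed. In practice these are routine computational verifications, so the only genuinely non-trivial ingredient is the existence of a $5$-spindle of length $6$ with as few as $25$ facets, which is the main content of~\cite{5prismatoids} and is not re-established here. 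Conditional on that input, the passage from $Q$ to $P'$ is mechanical, and the hardest part of the present statement is simply keeping track of the arithmetic $d\mapsto n-d$, $n\mapsto 2n-2d$, $l\mapsto l+n-2d$ in Theorem~\ref{thm:dstep-spindle}.
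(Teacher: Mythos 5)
Your proposal is correct and is exactly the route the paper takes: cite~\cite{5prismatoids} for a $5$-spindle with $25$ facets and length $6$, apply Theorem~\ref{thm:dstep-spindle} with $d=5$, $n=25$, $l=6$ to get a $20$-dimensional spindle with $40$ facets and length at least $21>20$, and defer the exact vertex count $36\,442$ and the matching upper bound on the diameter to the explicit computation of coordinates reported in~\cite{5prismatoids}. Nothing is missing.
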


Apart of the decrease in dimension, the smaller size of this example has allowed us to explicitly compute coordinates for the non-Hirsch polytope in question. Doing the same with the $43$-dimensional example presented in this paper seemed out of reach. We would need to apply $38$ times the operation of \emph{wedge followed by perturbation} in the proof of Theorem~\ref{thm:dstep-spindle} (see Section~\ref{sub:strong-dstep}). Julian Pfeifle (personal communication) wrote a small program to automatically do this and, using a standard desktop computer with 2GB of RAM, was able to undertake the first nine iterations. Experimentally, he found that each iteration more or less doubled the number of vertices (and multiplied by four or five the computation time) indicating that the final non-Hirsch polytope has about $2^{40}$ vertices. To make things worse, the tower of $38$ perturbations would give rise to either huge rational coefficients or very delicate numerical approximation issues.


%



\section{A strong $d$-step theorem for spindles}
\label{sec:general-d-step}

We find it easier to work in a polar setting in which we want to travel from facet to facet of a polytope $Q$ crossing \emph{ridges} (codimension-$2$ faces), rather than travel from vertex to vertex along edges.
That is, we are interested in the following dual version of the Hirsch conjecture:

\begin{definition}
A $d$-polytope $Q$ with $n$ vertices is a  \emph{dual-Hirsch} polytope if $n-d$ dual steps suffice to travel from any facet of $Q$ to any other facet. A \emph{dual step} consists in moving from one facet $F$ of $Q$ to an adjacent one $F'$, meaning by this that $F$ and $F'$ share a ridge of $Q$.
%
\end{definition}

Clearly, $Q$ is dual-Hirsch if and only if its polar polytope is Hirsch. In the rest of the paper we omit the word dual from our dual paths and dual steps.

\subsection{Two classical reductions}
It is known since the 60's that to prove or disprove the (dual) Hirsch conjecture it is enough to look at \emph{simplicial polytopes} with \emph{twice as many vertices as their dimension}. Since our strong $d$-step theorem is based in combining both reductions, let us see how they work.

For the first reduction, following Klee~\cite{Klee:diameters, Klee:number-of-vertices} we use the operation of \emph{pushing} vertices. Let $Q$ be a polytope with vertices $V$ and let $v\in V$ be one of them.
We say that a polytope $Q'$ is obtained from $Q$ by \emph{pushing $v$ }Êif the vertices of $Q'$ are $V\setminus\{v\}\cup \{v'\}$ for a certain point $v'\in Q$ and the only hyperplanes spanned by vertices of $Q$ that intersect the segment $vv'$ are those containing $v$. 
Put differently, the vertex $v$ is \emph{pushed} to a new position $v'$ within the polytope $Q$ but sufficiently close to its original position. We emphasize that we admit $v'$ to be in the boundary of $Q$. In the standard notion of pushing, $v'$ is required to be in the interior of $Q$.

\begin{lemma}
\label{lemma:pushing}
Let $Q'$ be obtained from $Q$ by pushing $v$. Then:
\begin{enumerate}
\item Let $F'$ be a facet of $Q'$ with vertex set $S'$ and let $S=S'\setminus\{v'\}\cup\{v\}$ or $S=S'$ depending on whether $v' \in F'$ or not. Then, there is a unique facet $\phi(F')$ of $Q$ such that $S\subset \phi(F')$.
\item The map $F'\mapsto \phi(F')$ sends adjacent facets of $Q'$ to either the same or adjacent facets of $Q$. (That is, $\phi$ is a simplicial map between the dual graphs of $Q'$ and $Q$).
\end{enumerate}
\end{lemma}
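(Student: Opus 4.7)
The plan is to exploit that pushing is a local perturbation at $v$: the pushing condition says no hyperplane spanned by vertices of $Q$ and missing $v$ separates $v$ from $v'$. Since $v' \in Q$ we have $Q' \subseteq Q$, and intuitively each facet of $Q$ through $v$ is possibly ``subdivided'' into facets of $Q'$ through $v'$, while facets of $Q$ not containing $v$ are inherited unchanged.

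For part (1), I split on whether $v' \in F'$. If $v' \notin F'$, all vertices of $F'$ lie in $V \setminus \{v\}$, so its supporting hyperplane $H'$ is spanned by vertices of $Q$ and does not contain $v'$. The pushing condition places $v$ strictly on the same side of $H'$ as $v'$, hence $Q \subseteq (H')^{-}$, and $\phi(F') := H' \cap Q$ is a facet of $Q$ containing $S = S'$. If $v' \in F'$, set $T = S' \setminus \{v'\} \subseteq V \setminus \{v\}$ and $H' = \aff(F') = \aff(T \cup \{v'\})$. The pushing condition applied to hyperplanes through $\aff(T)$ that are spanned by vertices of $V \setminus \{v\}$ forces $v \in (H')^{-}$, so $Q = \conv(Q' \cup \{v\}) \subseteq (H')^{-}$; combined with the sufficient closeness of $v'$ to $v$, this shows that $H' \cap Q$ is a face of $Q$ of dimension $d-1$, i.e., a facet, whose vertex set contains $T \cup \{v\} = S$. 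Uniqueness in both cases follows because the supporting hyperplane of any facet of $Q$ containing $S$ is pinned down by $\aff(S)$, and distinct facets have distinct supporting hyperplanes.

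For part (2), if $F_1'$ and $F_2'$ share a ridge $R'$ of $Q'$, then the vertex set of $R'$ is contained in $S_1' \cap S_2'$ and affinely spans a $(d-2)$-flat. After the substitution $v' \leftrightarrow v$ from the definition of $\phi$, the corresponding set $W \subseteq S_1 \cap S_2$ still spans a $(d-2)$-flat and lies in $\phi(F_1') \cap \phi(F_2')$. Therefore this intersection is a face of $Q$ of dimension at least $d-2$, so either $\phi(F_1') = \phi(F_2')$ or the two facets share a ridge, proving that $\phi$ is simplicial.

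The main obstacle is Case~A of part (1): carefully using the ``sufficient closeness'' of $v'$ to $v$ (an open condition on $v'$, implicit in the pushing definition) to exclude degeneracies where $S$ fails to affinely span a hyperplane, and to confirm that $H' \cap Q$ has the right dimension. The non-simplicial case, where $|S'| > d$, needs particular attention because then $V \cap H'$ may properly contain $S$, and one has to check that these extra vertices of $Q$ lying on $H'$ are consistently tracked by $\phi$ without breaking the uniqueness of $\phi(F')$.
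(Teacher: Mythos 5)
There is a genuine error in Case~A of part (1), beyond the gap you acknowledge at the end. You identify $\phi(F')$ with $H'\cap Q$ where $H'=\aff(F')=\aff(T\cup\{v'\})$, and claim the pushing condition forces $v\in (H')^{-}$. Neither holds. Take $Q=\conv\{A,B,v\}$ a triangle with $A=(0,0)$, $B=(1,0)$, $v=(0,1)$, and push $v$ to $v'=(1/4,1/4)$ (a legitimate pushing: no line spanned by vertices of $Q$ meets $vv'$ except those through $v$). For the edge $F'=[A,v']$ one has $H'=\{y=x\}$, the vertex $v$ lies strictly on the side of $H'$ opposite to $Q'$, and $H'\cap Q$ is the segment from $(0,0)$ to $(1/2,1/2)$ --- not a face of $Q$, and not containing $v$. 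The facet you want is $[A,v]$, supported by $\aff(S)=\aff(T\cup\{v\})$, not by $\aff(F')$. This is why the paper argues by continuity: it moves $v'$ back to $v$ along the segment and takes the \emph{limit} of the supporting hyperplanes $\aff(T\cup\{v(t)\})$ as $t\to 0$; that limit, when it exists as a hyperplane, supports $Q$ and cuts out the desired facet, and uniqueness follows because $S$ spans it.

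The remaining difficulty --- the one you flag as ``the main obstacle'' but do not resolve --- is precisely when that limit fails to be a hyperplane, i.e.\ when $S=T\cup\{v\}$ spans only a codimension-two flat because $F'$ is a pyramid with apex $v'$ over a ridge $G'$ of $Q'$ with $v\in\aff(G')$; then $S$ could lie in a ridge of $Q$ and the uniqueness of $\phi(F')$ would fail. ``Sufficient closeness'' of $v'$ to $v$ cannot exclude this: the degenerate configuration is invariant under rescaling $v'-v$, so it persists for $v'$ arbitrarily close to $v$ along a fixed direction. What excludes it is the hypothesis $v'\in Q$, which your write-up never uses for this purpose. The paper's argument is: in the degenerate case $\aff(F(t))$ contains the whole line through $v$ and $v'$, hence the last point $w$ where the ray from $v$ through $v'$ meets $Q$; since $w$ is a convex combination of vertices other than $v$, it lies in $F(t)$ for all $t>0$, and it is farther from $\aff(G')$ than $v(t)$, contradicting that $F(t)$ is a pyramid with apex $v(t)$ over $G'$. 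The same collapse issue (a ridge of $Q'$ degenerating under the substitution $v'\mapsto v$) is silently assumed away in your part (2) when you assert that $W$ ``still spans a $(d-2)$-flat''.
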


\begin{proof}
For part (1), consider what happens when we continuously move $v'$ to its original position $v$ along the segment $vv'$. We call $Q(t)$, $F(t)$, $S(t)$ and $v(t)$ the polytope, facet, vertex set, and vertex obtained at moment $t$, with $v'=v(1)$ and $v=v(0)$.
The assumption that no hyperplane spanned by vertices of $Q$ intersects $vv'$ unless it contains $v$ implies that  the combinatorics of $Q(t)$ remains the same at every moment $t>0$; changes will happen only at $t= 0$. Now, every facet-defining hyperplane of $Q(t)$ will tend to a facet-defining hyperplane of $Q(0)$ (which implies part (1)) unless the vertex set $S(t)$ spanning a certain facet $F(t)$ collapses to lie in a  flat of codimension two. Put differently, unless $F(t)$ is a pyramid with apex at $v(t)$ over a codimension two face $G'$ of $Q'$ with $v$ in the affine span of $G'$.
We claim that the assumption $v'\in Q$ rules out this possibility. Indeed, in this situation the hyperplane $H'$ spanned by $F(t)$ is independent of $t$ for $t>0$ and it contains the segment $vv'$. Let $w$ be the last point where the ray from $v$ through $v'$ meets $Q$. Then, $w$ is a convex combination of vertices of $Q$ different from $v$ and it lies in the hyperplane $H'$, so it lies in the facet $F(t)$ for every $t>0$. Since $w$ is further from $G'$ than $v(t)$, $F(t)$ cannot be a pyramid with apex at $v(t)$ and base $G'$.

For part (2) we reinterpret (the proof of) part (1) as saying: when $t$ goes from $1$ to $0$ the combinatorics of $Q'$ remains the same except that at $t=0$ some groups of facets of $Q'$ merge to single facets of $Q$. This implies the claim.
\end{proof}

\begin{corollary}[Klee~\cite{Klee:diameters}]
For every polytope $Q$  there is a simplicial polytope $Q'$  of the same dimension and number of vertices  and with the same or greater dual diameter.
\qed
\end{corollary}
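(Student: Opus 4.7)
The plan is to apply Lemma~\ref{lemma:pushing} iteratively, pushing each vertex of $Q$ in turn to a generic nearby position, and arriving at a simplicial polytope $Q'$ of the same dimension and with the same number of vertices. A single push preserves these two invariants automatically; the content of the argument is that it also does not decrease the dual diameter, and that after finitely many pushes every facet has become a simplex.

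The diameter estimate comes almost for free from Lemma~\ref{lemma:pushing}(2). The combinatorial map $\phi$ from facets of $Q'$ to facets of $Q$ is surjective, because any facet $F$ of $Q$ is the union of (the images of) the facets of $Q'$ contained in $F$. Given two facets $F_0, F_k$ of $Q$ realising $\operatorname{diam}(Q)$, pick preimages $F'_0, F'_k$ in $Q'$; since adjacent facets of $Q'$ map under $\phi$ to equal or adjacent facets of $Q$, any dual path of length $l$ between $F'_0$ and $F'_k$ in $Q'$ projects to a dual walk of length at most $l$ between $F_0$ and $F_k$ in $Q$. Hence $\d_{Q'}(F'_0, F'_k) \ge \d_Q(F_0, F_k) = \operatorname{diam}(Q)$, so $\operatorname{diam}(Q') \ge \operatorname{diam}(Q)$.

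For termination, I would argue that if $v$ is pushed to a $v'$ that is generic in the sense of Lemma~\ref{lemma:pushing} and in addition misses every affine hyperplane spanned by fewer than $d-1$ of the remaining vertices of $Q$, then every facet of the new polytope containing $v'$ is a simplex: its vertex set has the form $\{v'\} \cup S$ with $S$ lying in some facet of $Q$ through $v$, and genericity forbids $|S| > d-1$. Simultaneously, each facet of $Q'$ not containing $v'$ coincides with a facet of $Q$ not containing $v$, since $v'$ lies on the same side of every such facet hyperplane as $v$ did. Thus a push simplicialises every facet through its image vertex without damaging the others, and iterating over each vertex of $Q$ in turn produces a simplicial polytope.

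The main obstacle is purely technical: one needs to verify that a $v'$ meeting both genericity requirements exists in every neighbourhood of $v$ inside $Q$. This follows because the bad set is a finite union of proper affine subspaces intersected with a full-dimensional neighbourhood, so its complement is non-empty. Once this is in place, the surjectivity of $\phi$ together with its simplicial-map property does the rest of the work.
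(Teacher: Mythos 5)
Your overall strategy---iterate the pushing operation of Lemma~\ref{lemma:pushing} over all vertices and use part~(2) of that lemma to see that dual distances cannot decrease---is exactly what the paper intends (the statement carries only a \textit{qed}, the proof being considered immediate from Lemma~\ref{lemma:pushing}), and your diameter argument is correct: $\phi$ is surjective, dual paths in $Q'$ project to dual walks in $Q$ of no greater length, so the distance in $Q'$ between preimages of a diameter-realising pair of facets of $Q$ is at least the dual diameter of $Q$.

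The gap is in the termination step. The claim that a single generic push of $v$ makes every facet of $Q'$ through $v'$ a simplex is false, and no genericity condition on $v'$ alone can repair it. If $Q$ has a facet $F=G*v$ that is a pyramid with apex $v$ over a non-simplex ridge $G$ (the paper's own $5$-prismatoid has such facets: those of type $\rm C$ are iterated pyramids over a quadrilateral), then for every admissible $v'$ the polytope $Q'$ has the facet $G*v'$, still a pyramid over $G$ and still not a simplex. In your notation this is the case where $|S|\ge d$ but $S$ spans only a $(d-2)$-flat, so $v'\notin\aff(S)$ no matter how $v'$ is chosen and the argument ``genericity forbids $|S|>d-1$'' does not apply. (A smaller inaccuracy: a facet of $Q'$ not containing $v'$ need not coincide with a facet of $Q$; if $v$ is pushed off the hyperplane of a non-simplex facet $F\ni v$ whose remaining vertices still span that hyperplane, then $\conv(\vertices(F)\setminus\{v\})$ can appear as a new facet.) The conclusion is nevertheless true, but the induction must be set up differently: the invariant to maintain is that every facet \emph{all of whose vertices have already been pushed} is a simplex. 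For instance, choose each $v_i'$ to avoid, in addition to the pushing condition, every affine flat spanned by a proper subset of the previously pushed vertices $v_1',\dots,v_{i-1}'$; then any $d$ pushed vertices are affinely independent, and if $d+1$ vertices of the final polytope lay on a common facet hyperplane, the last-pushed among them would lie on the hyperplane spanned by the other $d$, contradicting its choice. The pyramidal facets above then cause no harm, because their non-simplex bases consist of vertices that are pushed later.
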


For the next lemma, and for the proof of Theorem~\ref{thm:dstep-prismatoid} we introduce the \emph{one-point-suspension}.
For a given vertex $v$ of a $d$-polytope $Q$ with $n$ vertices, the one-point-suspension
$\ops_v(Q)$ is constructed embedding $Q$ in a hyperplane in $\real^{d+1}$ and adding two new vertices $u$ and $w$ on opposite sides of that hyperplane and such that the segment $uw$ contains $v$. This makes $\ops_v(Q)$ have dimension $d+1$ and $n+1$ vertices (we added two, but $v$ is no longer a vertex). 
The facets of $\ops_v(Q)$ are of two types:
\begin{eqnarray*}
\ops_v(F), & & \text{ for each facet $F$ of $Q$ with $v\in F$, and} \\
F*u \text{ and } F*w,& & \text{ for each facet $F$ of $Q$ with $v\not\in F$.}  
\end{eqnarray*}
In this formula, $F*u$ denotes the pyramid over $F$ with apex at $u$.
See~\cite{triang-book} or \cite{Kim-Santos-companion}  for more details, and Figure~\ref{fig:ops} for an illustration. One-point-suspensions appear in the literature also under other names, such as \emph{dual wedges} or \emph{vertex splittings}. 

%
\begin{figure}[htb]
\begin{center}
\input{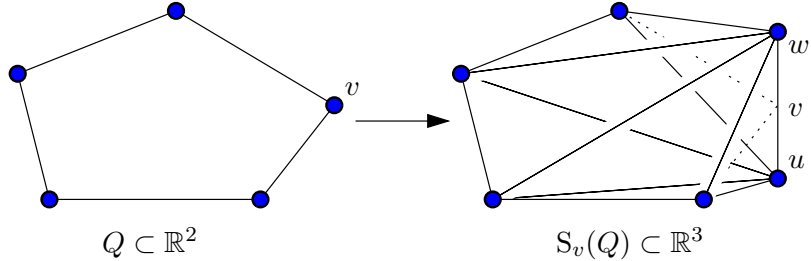}
\caption{The one-point-suspension of a pentagon is a simplicial $3$-polytope with six vertices and eight facets}
\label{fig:ops}
\end{center}
\end{figure}

\begin{lemma}
\label{lemma:ops}
Let $F_1$ and $F_2$ be two facets of a polytope $Q$ and let $\widetilde Q:=\ops_v(Q)$ for some vertex $v$. For $i=1,2$, let $\widetilde F_i\le \widetilde Q$ denote the facet $\ops_v(F_i)$ if $v\in F_i$ or one of the facets $F_i*u$ or $F_i*w$ if $v\not\in F_i$. Then, the  distance between $\widetilde F_1$ and $\widetilde F_2$ in the dual graph of $\widetilde Q$ is greater or equal than  the  distance between $F_1$ and $F_2$ in the dual graph of $Q$.
\end{lemma}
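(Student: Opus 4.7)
The plan is to construct a map $\pi$ from the facets of $\widetilde Q$ to the facets of $Q$ which (i) sends $\widetilde F_i$ to $F_i$ for $i=1,2$ and (ii) is \emph{simplicial} with respect to the dual graphs, meaning adjacent facets of $\widetilde Q$ are sent to facets of $Q$ that are either equal or adjacent. Given such a $\pi$, any dual path $\widetilde F_1 = G_0, G_1, \ldots, G_k = \widetilde F_2$ in $\widetilde Q$ projects to a walk $F_1 = \pi(G_0), \pi(G_1), \ldots, \pi(G_k) = F_2$ of length at most $k$ in the dual graph of $Q$, which yields the desired inequality of distances. This mirrors the strategy used for Lemma~\ref{lemma:pushing}.

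The natural candidate is $\pi(\ops_v(F)) := F$ for every facet $F$ of $Q$ containing $v$, and $\pi(F*u) := F =: \pi(F*w)$ for every facet $F$ of $Q$ not containing $v$. With this definition, condition (i) is built in by hypothesis. For condition (ii), one enumerates the ways two facets of $\widetilde Q$ can share a ridge (a face of dimension $d-1$); up to the $u \leftrightarrow w$ symmetry there are five cases to inspect.

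Four of them are straightforward. If both facets are of $\ops_v$-type, say $\ops_v(F)$ and $\ops_v(F')$, their intersection is $\ops_v(F \cap F')$, of dimension $\dim(F \cap F') + 1$, so it is a ridge of $\widetilde Q$ exactly when $F \cap F'$ is a ridge of $Q$; in that case $\pi$ sends them to the adjacent facets $F, F'$. The pair $F*u, F'*u$ is analogous via $(F \cap F')*u$. When the two facets are $F*u$ and $F*w$ for the same $F$, their intersection is $F$ itself, which is always a ridge of $\widetilde Q$, and $\pi$ collapses both images to $F$. When they are $F_1*u$ and $F_2*w$ with $F_1 \neq F_2$, the intersection is $F_1 \cap F_2$, whose dimension is at most $d-2$, so these are never adjacent in $\widetilde Q$.

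The subtle case is the mixed pair $\ops_v(F)$ with $v \in F$ against $F'*u$ with $v \notin F'$. The intersection visibly contains $\conv(\{u\} \cup (F \cap F'))$, and the crux is to verify that it equals this set: every point of $F'*u$ is a convex combination $\lambda u + (1-\lambda) x$ with $x \in F'$, and for such a point to lie in $\ops_v(F)$ one is forced to have $x \in F$ as well, so $x \in F \cap F'$. Hence the intersection has dimension $\dim(F \cap F') + 1$, so adjacency in $\widetilde Q$ forces $F \cap F'$ to be a ridge of $Q$, and $\pi$ again sends the pair to an adjacent pair. This last dimension calculation is the main obstacle in the proof, though it can alternatively be read off from the face-lattice description of one-point-suspensions referenced in the excerpt.
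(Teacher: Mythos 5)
Your proof is correct and follows essentially the same route as the paper: the paper simply asserts that the dual graph of $\widetilde Q$ projects onto that of $Q$ by sending $\ops_v(F)$, $F*u$ and $F*w$ to $F$, observing that this projection is a contraction of the dual edges between $F*u$ and $F*w$. Your case-by-case verification of the adjacencies (including the mixed case $\ops_v(F)$ versus $F'*u$) is exactly the detail the paper leaves implicit, and it is carried out correctly.
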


\begin{proof}
The  dual graph of $\widetilde Q$ projects down to that of $Q$ by sending each facet $\ops_v(F)$, $F*u$ or $F*w$ to the facet $F$ of $Q$ that it came from. Graph-theoretically, this projection amounts to contracting all the dual edges between $F*u$ and $F*w$, for each facet $F\le Q$ not containing $v$.
\end{proof}


\begin{proof}[Proof of Lemma~\ref{lemma:dstep}]
We prove that $H(n,d) \le H(2n-2d, n-d)$ by induction on $|n-d|$ and separating the cases $n>2d$ and $n<2d$:
If $n<2d$ then every pair $u,v$ of vertices of a $d$-polytope $P$ with $n$ facets lie in some common facet $F$. $F$ is a polytope of dimension $d-1$ with at most $n-1$ facets so the distance from $u$ to $v$ in  $F$ is bounded by $H(n-1,d-1)$.
%
If $n>2d$, apply Lemma~\ref{lemma:ops} to a $d$-polytope $Q$ with $n$ vertices whose dual diameter achieves $H(n,d)$, to get $H(n,d)\le H(n+1,d+1)$.
\end{proof}

\subsection{The strong $d$-step Theorem}
\label{sub:strong-dstep}

The following class of polytopes are the polars of the \emph{spindles} mentioned in Theorem~\ref{thm:dstep-spindle}:

\begin{definition}
A \emph{prismatoid} is a polytope having two parallel facets $Q^+$ and $Q^-$ that contain all vertices. We call $Q^+$ and $Q^-$ the \emph{base facets} of $Q$.
The \emph{width} of a prismatoid is the dual graph distance between $Q^+$ and $Q^-$.
\end{definition}


The ``base facets'' of a prismatoid may not be unique, but they are part of the definition. For example, a cube or an octahedron are prismatoids with respect to any of their pairs of opposite facets. 
Observe that the requirement of $Q^+$ and $Q^-$ to be parallel is not especially relevant, as long as they are disjoint. If $Q^+$ and $Q^-$ are disjoint facets of an arbitrary polytope $Q$ then a projective transformation can make them parallel without changing the combinatorics of $Q$ (see, e.~g., \cite[p.~69]{Ziegler:LecturesPolytopes}).
%
%
The following statement is equivalent to Theorem~\ref{thm:dstep-spindle}:

\begin{theorem}[Strong $d$-step Theorem for prismatoids]
\label{thm:dstep-prismatoid}
If $Q$ is a prismatoid of dimension $d$ with $n$ vertices and width $l$, then there is another prismatoid $Q'$ of dimension $n-d$, with $2n-2d$ vertices and width at least $l+n-2d$.
In particular, if $l>d$ then $Q'$ violates the (dual) $d$-step conjecture, hence also the (dual) Hirsch conjecture.
\end{theorem}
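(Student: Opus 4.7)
The plan is to proceed by induction on the excess $n - 2d$, which is non-negative because the two disjoint base facets $Q^+$ and $Q^-$ are $(d-1)$-polytopes and thus have at least $d$ vertices each. The base case $n = 2d$ is immediate: $Q' := Q$ itself satisfies the required bounds. For the inductive step, assuming $n > 2d$, I will produce an intermediate prismatoid $\tilde{Q}$ of dimension $d+1$, with $n+1$ vertices, and width at least $l+1$; applying the inductive hypothesis to $\tilde Q$ (whose excess drops to $n-2d-1$) then yields a prismatoid of dimension $n-d$ with $2n-2d$ vertices and width at least $l+n-2d$, as required.

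The construction of $\tilde{Q}$ combines the two classical reductions developed above. Pick a vertex $v'$ of $Q^+$ (say), and form the one-point-suspension $\tilde{Q}_0 := \ops_{v'}(Q)$; this is a $(d+1)$-polytope with $n+1$ vertices, having two natural candidate base facets $\ops_{v'}(Q^+)$ and $Q^- * a$, where $a$ is one of the two newly created vertices. These two facets share the vertex $a$, so $\tilde{Q}_0$ is not yet a prismatoid. To remedy this, push $a$ slightly off the hyperplane $\aff(\ops_{v'}(Q^+))$; after a projective transformation placing the two modified candidate facets into parallel hyperplanes (cf.~\cite[p.~69]{Ziegler:LecturesPolytopes}), the result is a genuine prismatoid $\tilde Q$ of dimension $d+1$ with $n+1$ vertices. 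Lemma~\ref{lemma:ops} applied to $\tilde Q_0$ already yields a dual-graph distance of at least $l$ between $\ops_{v'}(Q^+)$ and $Q^- * a$, and Lemma~\ref{lemma:pushing} ensures the subsequent push never decreases such distances.

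The main technical obstacle is upgrading this width bound from $l$ to $l+1$. The composition of maps $\tilde Q \stackrel{\phi}{\longrightarrow} \tilde{Q}_0 \stackrel{\pi}{\longrightarrow} Q$, where $\phi$ is the pushing map of Lemma~\ref{lemma:pushing} and $\pi$ collapses each pair $F * a,\, F * b$ onto $F$, sends any dual path in $\tilde Q$ between the two base facets to a walk from $Q^+$ to $Q^-$ in $Q$ of at most the same length, giving only width at least $l$. I would argue that any such path must incur at least one \emph{contraction} along this composition, via a side-parity argument: the new base facet $\hat Q^+$ (the pushed $\ops_{v'}(Q^+)$) contains $b$ but not the displaced vertex $a'$, whereas $\hat Q^-$ (the pushed $Q^- * a$) contains $a'$ but not $b$. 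A direct check in $\tilde Q_0$ shows that the only adjacencies between ``contains $a$'' and ``contains $b$'' facets are of the form $F * a \sim F * b$, which $\pi$ contracts. Hence any crossing of the $a$/$b$ divide in $\tilde Q$ must either lift such a $\pi$-contracting edge, or else be split by the pushing into two $\tilde Q$-adjacencies that collapse to a single facet of $\tilde Q_0$ under $\phi$. Making this argument fully precise, and verifying that $v'$ can always be chosen so that the perturbation is geometrically realisable (in particular so that the perturbed facet $\hat Q^+$ still has enough vertices to span a $d$-flat in $\mathbb{R}^{d+1}$), constitute the technical heart of the proof.
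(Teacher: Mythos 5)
Your overall strategy is the paper's: induct on $n-2d$ and, in the inductive step, combine a one-point-suspension with a perturbation to gain one dimension, one vertex, and one unit of width. The construction itself differs in the details: you suspend over a vertex $v'$ of $Q^+$ and then push the new apex $a$, whereas the paper suspends over a vertex $v$ of $Q^-$ (so that $\ops_v(Q^-)$ is already a facet and $Q^+$ is a ridge) and then perturbs a vertex of $Q^+$ out of the hyperplane of $Q$ to blow that ridge up into a facet. Your variant can be made to work, but as written the crucial point --- width at least $l+1$ rather than $l$ --- is left open, and the specific claim you offer to bridge it is not correct.

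Concretely: the facets of $\tilde Q_0=\ops_{v'}(Q)$ are $\ops_{v'}(F)$ for facets $F\ni v'$ of $Q$, which contain \emph{both} $a$ and $b$, and $F*a$, $F*b$ for $F\not\ni v'$. Your starting facet $\ops_{v'}(Q^+)$ is itself of the first kind, and $v'$ lies on at least $d-1$ further facets of $Q$, so a dual path can leave $\ops_{v'}(Q^+)$ through these ``both-sides'' facets and reach $Q^-*a$ without ever using an adjacency of the form $F*a\sim F*b$; indeed the distance from $\ops_{v'}(Q^+)$ to $Q^-*a$ in $\tilde Q_0$ is exactly $l$. So there is no $a$/$b$ parity obstruction in $\tilde Q_0$, the extra step must come entirely from the push, and your one-sentence appeal to paths being ``split by the pushing'' is precisely the statement that needs proof. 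Two further points you gloss over: $a'$ must remain on the hyperplane $\aff(Q^-*a)$ (otherwise $Q^-$ degenerates to a ridge and you lose the second base facet), and $Q^+$ must not be a pyramid with apex $v'$ (you flag the latter, not the former).

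The argument can be repaired by running the parity count in $\tilde Q$ rather than in $\tilde Q_0$: for a generic push within $\aff(Q^-*a)$, every facet of $\tilde Q$ contains exactly one of $a'$, $b$, and any ridge of $\tilde Q$ separating a $b$-facet from an $a'$-facet contains neither, hence is a $(d-1)$-polytope lying in, and affinely spanning, a single facet $F$ of $Q$; this forces both endpoints of that dual edge to map to $F$ under $\pi\circ\phi$, i.e.\ a collapse, which yields the missing $+1$. The paper avoids this global analysis with a local one: its new base facet $\widetilde Q^+$ is created by the perturbation, so its \emph{only} neighbours are facets refining $Q^+*u$ or $Q^+*w$, each still at distance at least $l$ from the other base, and the very first step out of $\widetilde Q^+$ already pays the extra unit.
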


\begin{proof}
%
We call the number $s=n-2d$ the \emph{asimpliciality} of $Q$ and
prove the theorem by induction on $s$. Since every facet of a $d$-polytope has at least $d$ vertices, $s$ is always non-negative. The  base case of $s=0$ is tautological. For the inductive step, we show that if $s>0$ we can construct from $Q$ a new prismatoid $\widetilde Q$ 
with dimension one higher, one vertex more (in particular, the ``asimpliciality'' has decreased by one), and width \emph{at least} one more than $Q$.

Since $s>0$, 
at least one of $Q^+$ and $Q^-$, say $Q^+$, is not a simplex (the other one, $Q^-$, may or may not be a simplex). Let $v$ be  a vertex of $Q^-$, and let $\ops_v(Q)$ be the one-point-suspension of $Q$ over $v$. 
%
Let $\widetilde Q^- = \ops_v(Q^-)$ be the one-point-suspension of $Q^-$, which appears as a facet of $\ops_v(Q)$. Observe that 
$\ops_v(Q)$ is almost a prismatoid: its faces $\widetilde Q^-$ and $Q^+$ contain all vertices and they lie in two parallel hyperplanes. 
The only problem is that $Q^+$ is not a facet, it is a ridge; but since we know that $Q^+$ is not a simplex, moving its vertices slightly in the direction of  the segment $uw$ creates a new facet $\widetilde Q^+$ parallel to $\widetilde Q^-$, and  these two facets contain all the vertices of the new polytope, that we denote $\widetilde{Q}$ and is a prismatoid. (In fact, moving a single vertex of $Q^+$ is enough to achieve this). See Figure~\ref{fig:ops-prismatoid} for an illustration. In the figure, we draw several points along the edge $Q^+$ to convey the fact that $Q^+$ is not a simplex; these points have to be understood as vertices of $Q$.

\begin{figure}[htb]
\begin{center}
\input{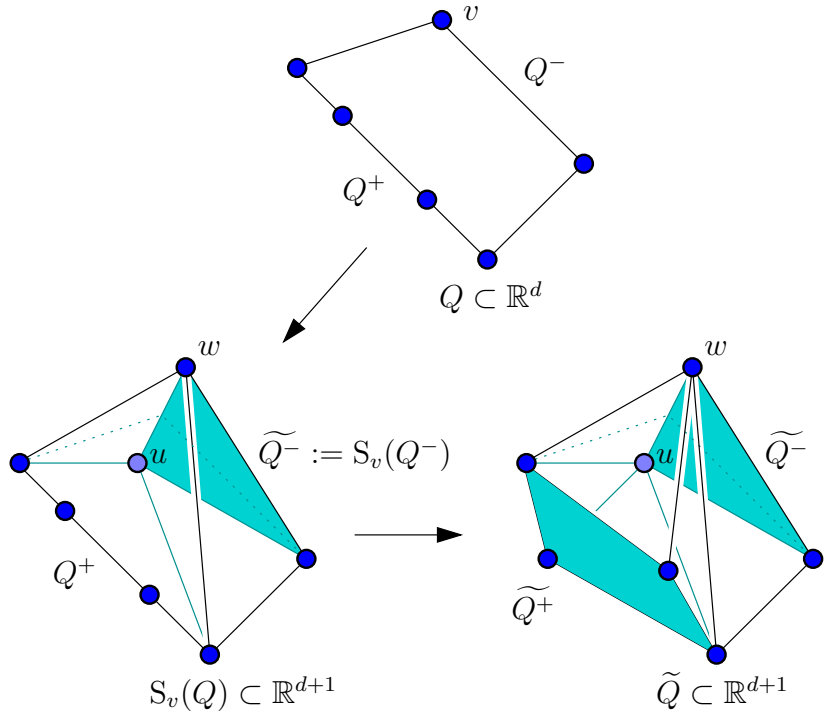}
\caption{Perturbing a base facet in the one-point-suspension $\ops_v(Q)$ of a prismatoid we get a new prismatoid $\widetilde{Q}$. The extra dots in the facet $Q^+$ are meant to be vertices of it, and convey the hypothesis that $Q^+$ is not a simplex
}
\label{fig:ops-prismatoid}
\end{center}
\end{figure}

By Lemma~\ref{lemma:ops}, the distance from $\widetilde Q^-$ to any of $Q^+*u$ and $Q^+*w$ in $\ops_v(Q)$ is (at least) $l$. 
To make sure that the width of $\widetilde Q$ is at least $l+1$ we do the perturbation from $\ops_v(Q)$ to $\widetilde Q$ in the following special manner: let $a$ be a vertex of $Q^+$ and assume that the only non-simplicial facets of $\ops_v(Q)$ containing $a$ are $Q^+*u$ and $Q^+*w$ (if that is not the case, we first push $a$ to a point  in the interior of $Q^+$ but otherwise generic, which maintains all the properties we need and does not decrease the dual distances, by Lemma~\ref{lemma:pushing}). We now get $\widetilde Q$ by moving only  $a$, in a direction parallel to $Q^-$ and away from $Q^+$, to a position $a'$. Then $Q^+$ will be replaced by a pyramid $\widetilde Q^+ := \conv(\vertices(Q^+)\setminus a)*a'$.

The genericity assumption on $a$ implies that 
apart from the creation of $\widetilde Q^+$, the only change to the face lattice of $\ops_v(Q)$ is that the two facets $Q^+*u$ and $Q^+*w$ get refined
as two complexes $U*u$ and $W*w$ where $U$ and $W$ are the lower and upper envelopes of the facet $\widetilde Q^+$ (here we are considering the direction $uw$ as vertical, with $w$ above $u$).
%
%
The width of $\widetilde Q$ is at least $l+1$ since in order to get out of $\widetilde Q^+$ the first step will send us to a facet in either $U*u$ or $W*w$, and that facet is at least at the same distance from $\widetilde Q^-$ as $Q^+*u$ or $Q^+*w$ were, by the same arguments as in the proof of Lemma~\ref{lemma:pushing}.
\end{proof}


\begin{remark}
\label{rem:strong-d-step}
\rm

The following version of the strong $d$-step theorem is also true, with essentially the same proof: Let $Q$ be a $d$-polytope with $n$ vertices and containing two disjoint facets $Q^+$ and $Q^-$ that use in total $m$ of the vertices, for some $m > 2d$. Let $l$ be the dual graph distance from $Q^+$ to $Q^-$. Then, there is a $(m-d)$-polytope $Q'$ with $n+m-2d$ vertices and having two facets ${Q^+}'$ and ${Q^-}'$ at distance $l+m-2d$. In particular, if $l> (n-m)+d$ then $Q'$ is non-Hirsch. 
The prismatoid version is the case $n=m$.
\end{remark}


\section{A $5$-prismatoid without the $d$-step property}
\label{sec:symmetry}

In the light of Theorem~\ref{thm:dstep-prismatoid}, we say that a prismatoid has the $d$-step property if its width does not exceed its dimension. 
It is an easy exercise to show that every $3$-dimensional prismatoid has this property. For $4$-dimensional ones, the result is still true is true, although not obvious anymore~\cite{4prismatoids}. In dimension five, however, we have the following statement, which implies  Theorem~\ref{thm:spindle}:

\begin{theorem}
\label{thm:prismatoid}
The $5$-dimensional prismatoid with  the $48$ rows of the matrices of Table~\ref{table:prismatoid} as vertices
has width six.
\end{theorem}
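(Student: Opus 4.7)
The proof requires two bounds: width at most $6$ and width at least $6$. The upper bound is essentially a path-finding exercise; the lower bound is where the real difficulty lies.

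\textbf{Upper bound (width $\leq 6$).} I would exhibit a concrete sequence $Q^+ = F_0, F_1, \dots, F_6 = Q^-$ of facets of $Q$ in which consecutive $F_i, F_{i+1}$ share a ridge. The key computational aid is the symmetry of the construction, indicated by the title of this section: the $48$ vertices in Table~\ref{table:prismatoid} almost certainly carry a nontrivial group action $G$ preserving $Q^+$ and $Q^-$ as sets. Using $G$, I would classify the facets of $Q$ into $G$-orbits, guess a path that visits one orbit at each intermediate step, and verify the five required ridge-adjacencies via orbit representatives rather than by exhaustive enumeration.

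\textbf{Lower bound (width $\geq 6$).} This is the main obstacle, and my plan is to translate the question into a problem about normal fans on $S^3$. Projection along the direction perpendicular to the base hyperplanes identifies the non-base facets of $Q$ with the maximal cells of the common refinement $\mathcal R$ of $N(Q^+)$ and $N(Q^-)$, both viewed as complete fans on $S^3$. Two such facets share a ridge in $Q$ if and only if the corresponding cells of $\mathcal R$ share a $2$-dimensional wall. Moreover, a non-base facet $F$ is adjacent to $Q^+$ (respectively $Q^-$) in the dual graph of $Q$ exactly when the associated cell of $\mathcal R$ is incident to a vertex of $N(Q^+)$ (respectively $N(Q^-)$) on $S^3$. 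Hence the width of $Q$ equals $2 + \delta$, where $\delta$ is the minimum chamber-graph distance in $\mathcal R$ from some chamber touching a vertex of $N(Q^+)$ to some chamber touching a vertex of $N(Q^-)$, and it suffices to prove $\delta \geq 4$.

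To establish $\delta \geq 4$ I would seek a \emph{topological barrier} on $S^3$: a codimension-$1$ subcomplex $\Sigma$ of $\mathcal R$ (for example a $2$-sphere or a disjoint union of $2$-spheres, or possibly a $2$-torus) such that every vertex of $N(Q^+)$ and every vertex of $N(Q^-)$ lies in a prescribed side of $\Sigma$, and any chamber-path between them must cross $\Sigma$ at least four times (counting with an appropriate mod-$2$ or oriented crossing invariant). The symmetry group $G$ should reduce the construction of $\Sigma$ to producing a single representative component and then taking its $G$-orbit. Concretely, I would look for $2$-spheres orthogonal to an axis of symmetry of the construction, each of which is forced by the geometry of $N(Q^+)$ and $N(Q^-)$ to separate certain vertex-chambers of one fan from all vertex-chambers of the other. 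The delicate point---and the real obstacle---is that $\Sigma$ must lie inside the $2$-skeleton of $\mathcal R$, so that crossings are well-defined as dual-graph edges; making sure that such a subcomplex of depth at least four exists without reverting to a brute-force enumeration of all chambers of $\mathcal R$ is what makes the lower bound substantive, and it is precisely here that the careful design of the coordinates in Table~\ref{table:prismatoid} is expected to pay off.
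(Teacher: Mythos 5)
Your upper-bound plan is sound and is essentially the paper's first proof (Section~\ref{sec:prismatoid}): the $48$ points carry a symmetry group of order $64$, the $322$ facets fall into six orbit-pairs, and checking the neighbours of five representative facets yields the whole orbit-adjacency graph, from which both bounds can be read off. The lower-bound half of your proposal, however, contains a genuine error in the passage to normal fans. A non-base facet of $Q$ meets an intermediate hyperplane in a facet of the Minkowski sum $Q^++Q^-$ (Proposition~\ref{prop:minkowski}), and facets of a Minkowski sum correspond to \emph{rays} of the common refinement of the two normal fans, i.e.\ to \emph{vertices} of the common refinement $\mathcal R$ of the two geodesic maps on $S^3$ --- not to maximal cells. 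The maximal cells of $\mathcal R$ correspond to the vertices $v^++v^-$ of $Q^++Q^-$, that is, to the ``vertical'' edges of $Q$; the chamber graph of $\mathcal R$ is the graph of $Q^++Q^-$, not its dual graph. Likewise, a non-base facet is adjacent to $Q^+$ exactly when its normal ray \emph{is} a ray of $N(Q^+)$, i.e.\ when the corresponding vertex of $\mathcal R$ is a vertex of $\gauss^+$. The quantity you must bound below by $4$ is therefore the distance, in the $1$-skeleton of $\mathcal R$, from the vertex set of $\gauss^+$ to the vertex set of $\gauss^-$; your $\delta$ is measured in the wrong graph.

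Even after this correction, the ``topological barrier crossed at least four times'' remains a hope rather than an argument: you produce no candidate $\Sigma$ and explicitly defer the step where all the difficulty lies, and a crossing-parity count lives naturally in a chamber graph, not in the $1$-skeleton where the bound is actually needed. The paper's mechanism is different and more economical. For a \emph{transversal} pair of maps, each vertex of $\mathcal R$ is $C_1\cap C_2$ with $\dim C_1+\dim C_2=3$, and along an edge of $\mathcal R$ each coordinate of the bi-dimension $(\dim C_1,\dim C_2)$ changes by at most one. Hence any path from a vertex of $\gauss^+$ (bi-dimension $(0,3)$) to a vertex of $\gauss^-$ (bi-dimension $(3,0)$) has length at least $3$, and a path of length exactly $3$ must climb a flag of $\gauss^+$ while descending one of $\gauss^-$, which forces $v_2$ to be a vertex of the facet of $\gauss^-$ containing $v_1$ in its interior, and symmetrically (Proposition~\ref{prop:transversal}). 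That incidence is then refuted by an explicit computation for a single vertex, vertex-transitivity of the symmetry group doing the rest (Lemma~\ref{lemma:vertex-in-cell}). In short, the global separation argument you are hoping for is replaced by a local incidence obstruction checked at one orbit representative; some explicit computation with the coordinates of Table~\ref{table:prismatoid} is unavoidable either way, and transversality of the pair must also be verified (or arranged by a small rotation).
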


\begin{table}[htb]
\small
\[
\bordermatrix{
&x_1&x_2&x_3&x_4&x_5\cr
\mathit{1}^+&  18&   0&   0&   0&   1 \cr
\mathit{2}^+& -18&   0&   0&   0&   1 \cr
\mathit{3}^+&   0&  18&   0&   0&   1 \cr
\mathit{4}^+&   0& -18&   0&   0&   1 \cr
\mathit{5}^+&   0&   0&  45&   0&   1 \cr
\mathit{6}^+&   0&   0& -45&   0&   1 \cr
\mathit{7}^+&   0&   0&   0&  45&   1 \cr
\mathit{8}^+&   0&   0&   0& -45&   1 \cr
\mathit{9}^+&  15&  15&   0&   0&   1 \cr 
\mathit{10}^+& -15&  15&   0&   0&   1 \cr
\mathit{11}^+&  15& -15&   0&   0&   1 \cr
\mathit{12}^+& -15& -15&   0&   0&   1 \cr
\mathit{13}^+&   0&   0&  30&  30&   1 \cr
\mathit{14}^+&   0&   0& -30&  30&   1 \cr
\mathit{15}^+&   0&   0&  30& -30&   1 \cr
\mathit{16}^+&   0&   0& -30& -30&   1 \cr
\mathit{17}^+&   0&  10&  40&   0&   1 \cr
\mathit{18}^+&   0& -10&  40&   0&   1 \cr
\mathit{19}^+&   0&  10& -40&   0&   1 \cr
\mathit{20}^+&   0& -10& -40&   0&   1 \cr
\mathit{21}^+&  10&   0&   0&  40&   1 \cr
\mathit{22}^+& -10&   0&   0&  40&   1 \cr
\mathit{23}^+&  10&   0&   0& -40&   1 \cr
\mathit{24}^+& -10&   0&   0& -40&   1 \cr
}
\qquad\quad
\bordermatrix{
&x_1&x_2&x_3&x_4&x_5\cr
\mathit{1}^-&   0&   0&   0&  18&  -1 \cr
\mathit{2}^-&   0&   0&   0& -18&  -1 \cr
\mathit{3}^-&   0&   0&  18&   0&  -1 \cr
\mathit{4}^-&   0&   0& -18&   0&  -1 \cr
\mathit{5}^-&  45&   0&   0&   0&  -1 \cr
\mathit{6}^-& -45&   0&   0&   0&  -1 \cr
\mathit{7}^-&   0&  45&   0&   0&  -1 \cr
\mathit{8}^-&   0& -45&   0&   0&  -1 \cr
\mathit{9}^-&   0&   0&  15&  15&  -1 \cr
\mathit{10}^-&   0&   0&  15& -15&  -1 \cr
\mathit{11}^-&   0&   0& -15&  15&  -1 \cr
\mathit{12}^-&   0&   0& -15& -15&  -1 \cr
\mathit{13}^-&  30&  30&   0&   0&  -1 \cr
\mathit{14}^-& -30&  30&   0&   0&  -1 \cr
\mathit{15}^-&  30& -30&   0&   0&  -1 \cr
\mathit{16}^-& -30& -30&   0&   0&  -1 \cr
\mathit{17}^-&  40&   0&  10&   0&  -1 \cr
\mathit{18}^-&  40&   0& -10&   0&  -1 \cr
\mathit{19}^-& -40&   0&  10&   0&  -1 \cr
\mathit{20}^-& -40&   0& -10&   0&  -1 \cr
\mathit{21}^-&   0&  40&   0&  10&  -1 \cr
\mathit{22}^-&   0&  40&   0& -10&  -1 \cr
\mathit{23}^-&   0& -40&   0&  10&  -1 \cr
\mathit{24}^-&   0& -40&   0& -10&  -1 \cr
}
\]
\caption{The $48$ vertices of a $5$-prismatoid without the $d$-step property}
\label{table:prismatoid}
\end{table}

$Q$ is small enough for the statement of Theorem~\ref{thm:prismatoid} to be verified computationally, which has been done independently by Edward D.~Kim and Julian Pfeifle with the software~\url{polymake}~\cite{polymake}. Still, in sections~\ref{sec:prismatoid} and~\ref{sec:sphere-maps}
we give two computer-free (but not ``computation-free'') proofs. Before going into details we list some properties of $Q$ which follow directly from its definition:

\begin{itemize}
\item The first $24$ vertices (labeled $\mathit{1}^+$ to $\mathit{24}^+$) and the last $24$ vertices (labeled $\mathit{1}^-$ to $\mathit{24}^-$) span two facets of $Q$, that we denote $Q^+$ and $Q^-$, lying in the hyperplanes $\{x_5=+1\}$ and $\{x_5=-1\}$. Hence, $Q$ is indeed a prismatoid.
 
\item 
$Q$ is symmetric under the orthogonal transformation $(x_1,x_2,x_3,x_4,x_5) \mapsto (x_4,x_3,x_1,x_2, -x_5)$, and this symmetry sends $Q^+$ to $Q^-$, with the vertex labeled $\mathit{i}^+$ going to the one labeled $\mathit{i}^-$. (Observe, however, that this symmetry is not an involution).

\item $Q^+$ and $Q^-$ (hence also $Q$) are themselves invariant under any of the following 
 $32$ orthogonal transformations:
\[
\begin{pmatrix}
\pm1 & 0 & 0 & 0 & 0 \\
0 & \pm1 & 0 & 0 & 0 \\
0 & 0 & \pm1 & 0 & 0 \\
0 & 0 & 0 & \pm1 & 0 \\
0 & 0 & 0 & 0 & 1 \\
\end{pmatrix},
\qquad
\begin{pmatrix}
0 & \pm1 & 0 & 0 & 0 \\
\pm1 & 0 & 0 & 0 & 0 \\
0 & 0 & 0 & \pm1 & 0 \\
0 & 0 & \pm1 & 0 & 0 \\
0 & 0 & 0 & 0 & 1 \\
\end{pmatrix}.
\]
That is, they are symmetric under changing the sign of any of the first four coordinates and also under the simultaneous transpositions $x_1\leftrightarrow x_2$ and 
$x_3\leftrightarrow x_4$.
\end{itemize}

In what follows we denote $\Sigma$ the symmetry group of $Q$ (of order $64$) and $\Sigma^+$ the index-two subgroup that preserves $Q^+$ and $Q^-$.

\section{First proof of Theorem~\ref{thm:prismatoid}}
\label{sec:prismatoid}

The first proof of Theorem~\ref{thm:prismatoid} goes by explicitly describing the adjacency graph between orbits of facets of $Q$ which, thanks to symmetry, is not too difficult:


\begin{theorem}
\label{thm:facets}
\begin{enumerate}
\item The $2 + 20\times 16 = 322$ inequalities of Table~\ref{table:prsimatoid-facets} define facets of $Q$. $\rm A$ and $\rm L$ are the bases of the prismatoid. Among the rest, the $32$ labeled with the same letter form a $\Sigma^+$-orbit. There are six $\Sigma$-orbits, obtained as the $\Sigma^+$-orbit unions $\rm A \cup L$, $\rm B\cup K$, $\rm C\cup J$, $\rm D\cup I$, $\rm E\cup H$, and $\rm F\cup G$.
\begin{table}[htb]
\small
\[
\begin{array}{rrlcllll}
\hline
\rm A:  &1&-x_5 &\ge & 0.\medskip\cr
\rm B_{\pm,\pm,\pm,\pm}: &\frac{315}{2} &-\frac{135}{2} x_5 &\ge & \pm 5x_1 & \pm x_2 & \pm 2x_3 & \pm x_4.\medskip \cr
\rm B'_{\pm,\pm,\pm,\pm}:  &\frac{315}{2} &-\frac{135}{2} x_5 &\ge & \pm 5x_2 & \pm x_1 & \pm 2x_4 & \pm x_3.\cr\cr
\rm C_{\pm,\pm,\pm,\pm}:&135 &-45x_5 &\ge & \pm 4 x_1 & \pm 2 x_2 & \pm\frac{7}{4} x_3 & \pm \frac{5}{4} x_4.\medskip\cr
\rm C'_{\pm,\pm,\pm,\pm}:  &135 &-45x_5 &\ge & \pm4x_2 & \pm 2 x_1 & \pm\frac{7}{4} x_4 & \pm \frac{5}{4}x_3.\cr\cr
\rm D_{\pm,\pm,\pm,\pm}: &135 &-45x_5&\ge & \pm  4 x_1 & \pm x_2 & \pm 2 x_3 & \pm x_4.\medskip\cr
\rm D'_{\pm,\pm,\pm,\pm}:  &135 &-45x_5&\ge & \pm  4 x_2 & \pm x_1 & \pm 2 x_4 & \pm x_3.\cr\cr
\rm E_{\pm,\pm,\pm,\pm}: &105 &-30 x_5&\ge & \pm 3x_1 & \pm \frac{3}{2}x_2 & \pm \frac{3}{2}x_3 & \pm  x_4.\medskip\cr
\rm E'_{\pm,\pm,\pm,\pm}:  &105 &-30 x_5&\ge & \pm 3x_2 & \pm \frac{3}{2}x_1 & \pm \frac{3}{2}x_4 & \pm  x_3.\cr\cr
\rm F_{\pm,\pm,\pm,\pm}: &75 &-15x_5&\ge & \pm  2 x_1 & \pm x_2 & \pm x_3 & \pm x_4 \medskip\cr
\rm F'_{\pm,\pm,\pm,\pm}:   &75 &-15x_5&\ge & \pm  2 x_2 & \pm x_1 & \pm x_4 & \pm x_3 \cr\cr
\rm G_{\pm,\pm,\pm,\pm}: & 75 &+15x_5&\ge & \pm  2 x_4 & \pm x_3 & \pm x_1 & \pm x_2 \medskip\cr
\rm G'_{\pm,\pm,\pm,\pm}:& 75 &+15x_5&\ge & \pm  2 x_3 & \pm x_4 & \pm x_2 & \pm x_1 \cr \cr
\rm H_{\pm,\pm,\pm,\pm}: & 105 &+30 x_5&\ge & \pm 3x_4 & \pm \frac{3}{2}x_3 & \pm \frac{3}{2}x_1 & \pm  x_2. \medskip\cr
\rm H'_{\pm,\pm,\pm,\pm}:& 105 &+30 x_5&\ge & \pm 3x_3 & \pm \frac{3}{2}x_4 & \pm \frac{3}{2}x_2 & \pm  x_1. \cr\cr
\rm I_{\pm,\pm,\pm,\pm}:  &135 &+45x_5&\ge & \pm  4 x_4 & \pm x_3 & \pm 2 x_1 & \pm x_2. \medskip\cr
\rm I'_{\pm,\pm,\pm,\pm}: &135 &+45x_5&\ge & \pm  4 x_3 & \pm x_4 & \pm 2 x_2 & \pm x_1. \cr\cr
\rm J_{\pm,\pm,\pm,\pm}:  & 135 &+45x_5 &\ge & \pm4x_4 & \pm 2 x_3 & \pm\frac{7}{4} x_1 & \pm \frac{5}{4}x_2. \medskip\cr
\rm J'_{\pm,\pm,\pm,\pm}: & 135 &+45x_5 &\ge & \pm4x_3 & \pm 2 x_4 & \pm\frac{7}{4} x_2 & \pm \frac{5}{4}x_1. \cr\cr
\rm K_{\pm,\pm,\pm,\pm}: & \frac{315}{2} &+\frac{135}{2} x_5 &\ge & \pm 5x_4 & \pm x_3 & \pm 2x_1 & \pm x_2. \medskip\cr
\rm K'_{\pm,\pm,\pm,\pm}:& \frac{315}{2} &+\frac{135}{2} x_5 &\ge & \pm 5x_3 & \pm x_4 & \pm 2x_2 & \pm x_1. \cr\cr
\rm L:& 1 &+ x_5 &\ge & 0.\cr
\hline
\end{array}
\]
\caption{The $322$ facets of $Q$}
\label{table:prsimatoid-facets}
\end{table}

\item These are all the facets of $Q$.

\item The only adjacencies between facets of $Q$ in different $\Sigma^+$-orbits of facets of $Q$ are the ones shown in Figure~\ref{fig:prismatoid-graph}.
\end{enumerate}
\end{theorem}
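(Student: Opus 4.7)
The overall strategy rests on the order-$64$ symmetry group $\Sigma$ of $Q$, together with its index-two subgroup $\Sigma^+$ stabilizing the base facets. Since $\Sigma$ acts simultaneously on the vertex set $V$ and on the list of $322$ candidate inequalities while preserving incidences, every assertion in the theorem reduces to a handful of orbit representatives: once a property is verified for one element of a $\Sigma$-orbit it propagates automatically to the full orbit.

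For part (1), it suffices to handle one representative per $\Sigma$-orbit, for instance the inequalities labeled $A$, $B_{+,+,+,+}$, $C_{+,+,+,+}$, $D_{+,+,+,+}$, $E_{+,+,+,+}$, and $F_{+,+,+,+}$. For each representative $\langle a,x\rangle\le b$, I would (i)~verify validity by evaluating $\langle a,v\rangle$ on a system of $\Sigma^+$-orbit representatives of $V$, which is a direct numerical computation on the table; and (ii)~exhibit five affinely independent vertices of $V$ at which equality holds, which certifies that the hyperplane cuts a facet of the $5$-dimensional polytope $P:=\conv(V)$.

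For part (2), set $P':=\bigcap_i H_i$; validity from part (1) gives $P\subseteq P'$. The opposite inclusion amounts to showing that every vertex of $P'$ already lies in $V$. The plan is twofold: first, for each $\Sigma$-orbit representative $v\in V$, exhibit five listed inequalities that are active at $v$ with linearly independent normals, so $v$ is confirmed to be a vertex of $P'$; second, at each such $v$ collect all listed inequalities active at $v$ and check that they fit together into the correct local structure at $v$ (i.e.~the vertex figure is a $4$-polytope with the expected combinatorics). Assembling these local stars at every $v\in V$ produces a candidate boundary complex for $P'$; verifying it is a PL $4$-sphere with vertex set $V$ forces $P=P'$ and hence that the $322$ inequalities exhaust the facets.

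For part (3), two facets are adjacent if and only if their vertex sets meet in an affinely $4$-dimensional subset. Working in the quotient by $\Sigma^+$ (with its $22$ facet orbits), for each ordered pair of orbit representatives $(F_1,F_2)$ with $F_1\ne F_2$ I would compute $\vertices(F_1)\cap\vertices(F_2)$ directly from the equality conditions of the two inequalities, test its affine rank, and cut down further cases using the $\Sigma^+$-stabilizer of $F_1$. This enumeration yields precisely the list of inter-orbit edges drawn in Figure~\ref{fig:prismatoid-graph}. The main obstacle is part (2): honestly ruling out spurious vertices of $P'$ without invoking a black-box facet enumeration, since an unforeseen $5$-wise intersection of listed hyperplanes could in principle yield a point satisfying all $322$ inequalities yet outside $V$. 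My intended safeguard is a global consistency check complementing the local vertex-figure analysis: the $f$-vector and Euler characteristic of $\partial P'$ assembled from the local data and from the orbit-level adjacencies of part (3) must match what is required of the boundary sphere of a $5$-polytope, and any hidden vertex would produce a detectable discrepancy.
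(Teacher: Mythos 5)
Your part (1) matches the paper's proof: one representative inequality per $\Sigma$-orbit, validity checked on the sixteen vertices with non-negative coordinates, and a rank-$5$ set of incident vertices to certify a facet. Part (2) is where your plan goes astray. The paper never passes to $P'=\bigcap_i H_i$ or worries about its vertices; the theorem concerns the facets of $Q=\conv(V)$, and the completeness argument is closure under adjacency: since the dual graph of a polytope is connected, a nonempty collection of genuine facets that contains every facet sharing a ridge with one of its members must be the full facet list. To run this one must know how many ridges each listed facet has, which is why the paper first identifies the combinatorial type of each representative: $D$, $E$, $F$ are simplices (five ridges), $C$ is an iterated pyramid with two apices over a planar quadrilateral (six ridges), and $B$ is a pyramid over a triangular prism (six ridges), the latter two facts certified by explicit affine relations among the incident vertices. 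Exhibiting, for each representative, exactly that many neighbors already in the list then proves parts (2) and (3) simultaneously.

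Your proposed safeguard for part (2) --- assembling local vertex figures at the points of $V$ and checking $f$-vector/Euler-characteristic consistency --- is not a valid certificate of completeness. Local stars at the points of $V$ say nothing about a hypothetical portion of the boundary disjoint from $V$, and the Euler characteristic of a candidate complex does not distinguish the full boundary sphere from a proper subcomplex or a larger complex with the same alternating sum; so a ``hidden vertex'' would not necessarily produce a detectable discrepancy. The sound local-to-global check is exactly the pseudomanifold/adjacency condition (every ridge of every listed facet lies in a second listed facet), which is the paper's argument; replacing your vertex-figure analysis by that ridge count closes the gap, but as written part (2) is not a proof. Your part (3) plan (intersect vertex sets of orbit representatives, test affine rank, quotient by stabilizers) is workable and close to the paper's, except that the paper obtains the exact neighbor lists as a by-product of the ridge count rather than by a separate pairwise enumeration over all $22$ orbits.
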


\begin{figure}[htb]
\begin{center}
\includegraphics[scale=0.85]{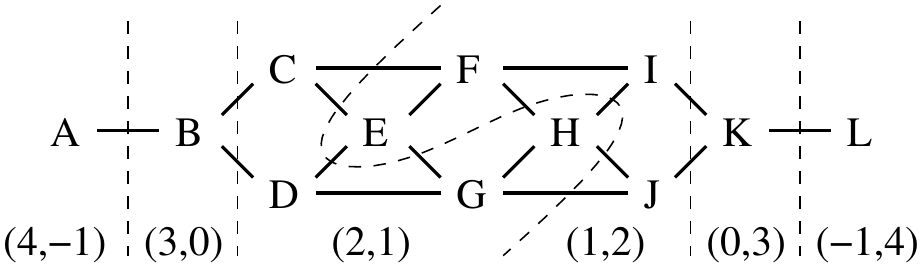}
\caption{The adjacencies between facets of $Q$, modulo symmetry. Dashed lines separate facets according to their bidimension. We say that a facet has bidimension $(i,j)$ if it is the convex hull of an $i$-face of $Q^+$ and a $j$-face of $Q^-$. The empty face is considered to have dimension $-1$
}
\label{fig:prismatoid-graph}
\end{center}
\end{figure}

Part 3 implies Theorem~\ref{thm:prismatoid}: in Figure~\ref{fig:prismatoid-graph} it is clear that six steps are necessary (and sufficient) to go from the facet $Q^+$  to the facet $Q^-$ (labeled $\rm A$ and $\rm L$).

\begin{proof}[Proof of Theorem~\ref{thm:facets}.]
In part one, the assertions about $\Sigma$ and $\Sigma^+$-orbits are straightforward, from the aspect of the inequalities. To check that these $322$ inequalities define facets, we need to consider a single representative from each $\Sigma$-orbit. For the base facets this is obvious, and for the rest we choose the following representatives:
\[
\begin{array}{rrlcllll}
\rm B_{+,+,+,+}: &\frac{315}{2} &-\frac{135}{2} x_5 &\ge &  5x_1 & + x_2 & + 2x_3 & + x_4.\medskip \cr
\rm C_{+,+,+,+}:&135 &-45x_5 &\ge & 4x_1 & + 2 x_2 & +\frac{7}{4} x_3 & + \frac{5}{4}x_4.\medskip\cr
\rm D_{+,+,+,+}: &135 &-45x_5&\ge &   4 x_1 & + x_2 & + 2 x_3 & + x_4.\medskip\cr
\rm E_{+,+,+,+}: &105 &-30 x_5&\ge &  3x_1 & + \frac{3}{2}x_2 & + \frac{3}{2}x_3 & +  x_4.\medskip\cr
\rm F_{+,+,+,+}: &75 &-15x_5&\ge &   2 x_1 & + x_2 & + x_3 & + x_4 \medskip\cr
\end{array}
\]

We leave it to the reader to check that these five inequalities are satisfied on all $48$ vertices of $Q$, and with equality precisely in the vertices listed for each in Table~\ref{table:vertex-facet}. This task is not as hard as it seems since only the vertices with non-negative coordinates $x_1$, $x_2$, $x_3$ and $x_4$ need to be checked (there are sixteen of them).
The five matrices have rank five, which shows that the vertices in each of them span at least an affine hyperplane. Hence, they all define facets of $Q$.

\begin{table}
\[
\begin{array}{cc}
\text{facet} & \text{vertices} \cr
\hline
\rm B_{+,+,+,+}: &
%
\bordermatrix{
&x_1&x_2&x_3&x_4&x_5\cr
\mathit{1}^+&  18&   0&   0&   0&   1 \cr
\mathit{5}^+&   0&   0&  45&   0&   1 \cr
\mathit{9}^+&  15&  15&   0&   0&   1 \cr 
\mathit{13}^+&   0&   0&  30&  30&   1 \cr
\mathit{17}^+&   0&  10&  40&   0&   1 \cr
\mathit{21}^+&  10&   0&   0&  40&   1 \cr
\mathit{5}^-&  45&   0&   0&   0&  -1
}\cr
%
\rm C_{+,+,+,+}:&
%
\bordermatrix{
&x_1&x_2&x_3&x_4&x_5\cr
\mathit{9}^+&  15&  15&   0&   0&   1 \cr 
\mathit{13}^+&   0&   0&  30&  30&   1 \cr
\mathit{17}^+&   0&  10&  40&   0&   1 \cr
\mathit{21}^+&  10&   0&   0&  40&   1 \cr
\mathit{5}^-&  45&   0&   0&   0&  -1 \cr
\mathit{13}^-&  30&  30&   0&   0&  -1 \cr
}\cr
%
\rm D_{+,+,+,+}: &
%
\bordermatrix{
&x_1&x_2&x_3&x_4&x_5\cr
\mathit{5}^+&   0&   0&   45&  0&   1 \cr
\mathit{13}^+&   0&   0&  30&  30&   1 \cr
\mathit{17}^+&   0&  10&  40&   0&   1 \cr
\mathit{5}^-&  45&   0&   0&   0&  -1 \cr
\mathit{17}^-&   40&  0&   10&  0&  -1 \cr
}\cr
%
\rm E_{+,+,+,+}: &
%
\bordermatrix{
&x_1&x_2&x_3&x_4&x_5\cr
\mathit{13}^+&   0&   0&  30&  30&   1 \cr
\mathit{17}^+&   0&  10&  40&   0&   1 \cr
\mathit{5}^-&  45&   0&   0&   0&  -1 \cr
\mathit{13}^-&  30&  30&   0&   0&  -1 \cr
\mathit{17}^-&   40&  0&   10&  0&  -1 \cr
}\cr
%
\rm F_{+,+,+,+}: &
%
\bordermatrix{
&x_1&x_2&x_3&x_4&x_5\cr
\mathit{13}^+&   0&   0&  30&  30&   1 \cr
\mathit{21}^+&  10&   0&   0&  40&   1 \cr
\mathit{5}^-&  45&   0&   0&   0&  -1 \cr
\mathit{13}^-&  30&  30&   0&   0&  -1 \cr
\mathit{17}^-&  40&   0&  10&   0&  -1 \cr
}\cr
\hline
\end{array}
\]
\caption{Vertex-facet incidence for the representative facets}
\label{table:vertex-facet}
\end{table}

For parts 2 and 3 we look more closely at the matrices in Table~\ref{table:vertex-facet} and observe that:
\begin{itemize}
\item The facets of types $D$, $E$ and $F$ are simplices, since they have five vertices: three in $Q^+$ and two in $Q^-$ or vice-versa.
\item The facets of type $C$ are iterated pyramids, with two apices in $Q^-$, over a quadrilateral in $Q^+$. Indeed, the vertices $\mathit{9}^+$, $\mathit{13}^+$, $\mathit{17}^+$ and 
$\mathit{21}^+$ form a planar quadrilateral, since
\[
2 v_{\mathit{9}^+} + 4 v_{\mathit{13}^+} = 3 v_{\mathit{17}^+} + 3 v_{\mathit{21}^+}.
\]
\item The facets of type $B$ are pyramids, with apex in $Q^-$ over a triangular prism in $Q^+$. Indeed, the six vertices in $Q^+$ form a triangular prism since the rays $\overrightarrow{v_{\mathit{1}^+} v_{\mathit{5}^+}}$, 
$\overrightarrow{v_{\mathit{9}^+} v_{\mathit{17}^+}}$,  and $\overrightarrow{v_{\mathit{21}^+} v_{\mathit{13}^+}}$ collide at the point  $o=(-30,0,120,0,1)$. This follows from the following equalities, and is illustrated in Figure~\ref{fig:prism-b}.
\[
\frac{8}{3}v_{\mathit{5}^+} - \frac{5}{3} v_{\mathit{1}^+} =
3v_{\mathit{17}^+} - 2v_{\mathit{9}^+}=
4 v_{\mathit{13}^+} - 3 v_{\mathit{21}^+}=
o.
\]
\end{itemize}
\begin{figure}[htb]
\begin{center}
\includegraphics[scale=0.7]{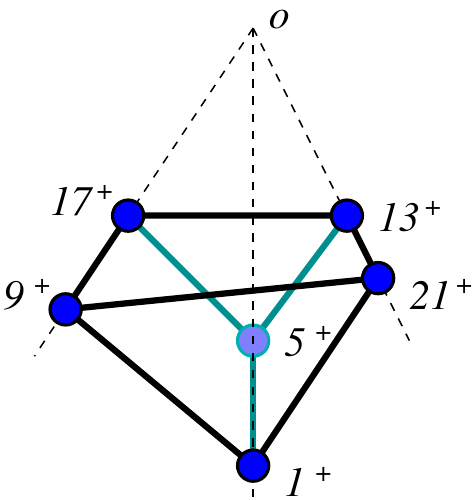}
\caption{The prism contained in $B_{+,+,+,+}$}
\label{fig:prism-b}
\end{center}
\end{figure}

With this information, we can prove parts 2 and 3 together by simply constructing the dual graph, that is, by looking at adjacencies between facets. The simplices of types $D$, $E$ and $F$ need to have five neighboring facets, the double pyramid of type $C$ needs six, and the pyramid over a triangular prism needs six too. So, it suffices to check the following adjecencies, which can be done by tracking the vertices in the five representative facets and the permutations of facets  within each $\Sigma$-orbit induced by the symmetries of $Q$:
\begin{itemize}
\item The following facets are neighbors of $B_{+,+,+,+}$:
\[
\begin{array}{cccccc}
A& 
B_{+,-,+,+} & 
B_{+,+,-,+} & 
B_{+,+,+,-} &
C_{+,+,+,+} & 
D_{+,+,+,+} \cr
\end{array}
\]
\item The following facets are neighbors of $C_{+,+,+,+}$:
\[
\begin{array}{cccccc}
B_{+,+,+,+}   & 
C_{+,+,-,+}&
C_{+,+,+,-} &
C'_{+,+,+,+} &
E_{+,+,+,+} & 
F_{+,+,+,+} \cr
\end{array}
\]
\item The following facets are neighbors of $D_{+,+,+,+}$:
\[
\begin{array}{ccccc}
B_{+,+,+,+} & 
D_{+,-,+,+}  & 
D_{+,+,+,-} &
E_{+,+,+,+}  &  
G_{+,+,+,+} \cr
\end{array}
\]
\item The following facets are neighbors of $E_{+,+,+,+}$:
\[
\begin{array}{ccccc}
C_{+,+,+,+} & 
D_{+,+,+,+} & 
E_{+,+,+,-} &  
F_{+,+,+,+} &  
G_{+,+,+,+} \cr
\end{array}
\]
\item The following facets are neighbors of $F_{+,+,+,+}$:
\[
\begin{array}{ccccc}
C_{+,+,+,+} &
E_{+,+,+,+} & 
F_{+,-,+,+} &
H'_{+,+,+,+} & 
I'_{+,+,+,+} \cr
\end{array}
\]
\end{itemize}
\end{proof}

\section{Second proof of Theorem~\ref{thm:prismatoid}}
\label{sec:sphere-maps}

The second proof of Theorem~\ref{thm:prismatoid} reduces the study of the combinatorics of $d$-prismatoids to that of \emph{pairs of geodesic maps} in the $(d-2)$-sphere.

\subsection{From prismatoids to pairs of geodesic maps}
\label{sec:maps}

The intersection of a prismatoid with an intermediate hyperplane equals
the Minkowski sum $Q^+ + Q^-$ of its two bases. More precisely:

\begin{proposition}
\label{prop:minkowski}
If $Q$ is a prismatoid with base facets $Q^+$ and $Q^-$ and $H$ is an intermediate hyperplane parallel to $Q^+$ and $Q^-$ then
\[
Q\cap H = \lambda_1 Q^+ + \lambda_2 Q^-,
\]
where $\lambda_1+\lambda_2=1$ and $\lambda_1: \lambda_2=\d(H,Q^+) : \d(H,Q^-)$ (the ratio of distances from $H$ to $Q^+$ and $Q^-$).
\qed
\end{proposition}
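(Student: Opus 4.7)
The plan is to exploit the defining feature of a prismatoid, namely that every vertex of $Q$ lies in $Q^+ \cup Q^-$, so that
\[
Q = \conv(Q^+ \cup Q^-) = \{\lambda_1 p + \lambda_2 q : p \in Q^+,\, q \in Q^-,\, \lambda_1,\lambda_2 \ge 0,\, \lambda_1+\lambda_2 = 1\},
\]
which reduces the statement to an elementary calculation about where the segment from $p \in Q^+$ to $q \in Q^-$ hits the intermediate hyperplane $H$.

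First I would verify the displayed identity $Q = \conv(Q^+ \cup Q^-)$. Since $Q$ is the convex hull of its vertex set $V$ and $V \subset Q^+ \cup Q^-$ by the prismatoid hypothesis, we get $Q \subseteq \conv(Q^+ \cup Q^-) \subseteq Q$. Next, grouping vertices by which base they belong to, any convex combination of vertices can be written as $\lambda_1 p + \lambda_2 q$ with $p \in Q^+$, $q \in Q^-$ and $\lambda_1 + \lambda_2 = 1$ (degenerate cases $\lambda_1 = 0$ or $\lambda_2 = 0$ cause no harm, as the corresponding $p$ or $q$ can be chosen arbitrarily in the respective base).

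The second step is the height computation. Set up affine coordinates so that $Q^+$ lies at height $h^+$, $Q^-$ at height $h^-$, and $H$ at some intermediate height $h$. Then a point $\lambda_1 p + \lambda_2 q$ lies in $H$ exactly when $\lambda_1 h^+ + \lambda_2 h^- = h$, which together with $\lambda_1 + \lambda_2 = 1$ pins down $\lambda_1 = (h - h^-)/(h^+ - h^-)$ and $\lambda_2 = (h^+ - h)/(h^+ - h^-)$. These are precisely the values such that $\lambda_1 : \lambda_2$ equals the ratio of the signed distances from $H$ to the two bases. With $\lambda_1, \lambda_2$ thus fixed, the set of points $\lambda_1 p + \lambda_2 q$ with $p \in Q^+, q \in Q^-$ is, by definition, the Minkowski sum $\lambda_1 Q^+ + \lambda_2 Q^-$ (interpreted inside the hyperplane $H$ via the obvious parallel translation identifications of $Q^+$ and $Q^-$ with their rescaled copies in $H$).

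No step is genuinely hard: the only thing to be careful about is the bookkeeping identifying $\lambda_1 Q^+$ and $\lambda_2 Q^-$ with subsets of $H$ so that the Minkowski sum makes sense, and checking that the ratio $\lambda_1 : \lambda_2$ matches the one stated in the proposition (up to the natural correspondence between $\lambda_i$ and the distance to the \emph{opposite} base, which is what the affine-combination computation yields). Once these identifications are made, the equality $Q \cap H = \lambda_1 Q^+ + \lambda_2 Q^-$ follows from the two inclusions: $\subseteq$ is the computation above, and $\supseteq$ is immediate because any $\lambda_1 p + \lambda_2 q$ with $p \in Q^+, q \in Q^-$ lies in $Q$ (by convexity) and in $H$ (by the height identity).
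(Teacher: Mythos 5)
Your proof is correct. The paper offers no proof at all for this proposition (the statement is simply followed by \qed), so there is nothing to diverge from; your argument --- $Q=\conv(Q^+\cup Q^-)$ because all vertices lie in the two bases, split any convex combination by base to write points of $Q$ as $\lambda_1 p+\lambda_2 q$, pin down $(\lambda_1,\lambda_2)$ by the height equation, and recognize the resulting slice as the Minkowski sum $\lambda_1 Q^+ + \lambda_2 Q^-$ --- is exactly the standard justification the author leaves implicit. Your parenthetical about $\lambda_i$ corresponding to the distance to the \emph{opposite} base is also the right call: read literally, the ratio $\lambda_1:\lambda_2=\d(H,Q^+):\d(H,Q^-)$ in the statement would make $Q\cap H$ tend to $Q^-$ as $H$ approaches $Q^+$, so the values your computation produces, $\lambda_1=(h-h^-)/(h^+-h^-)$ and $\lambda_2=(h^+-h)/(h^+-h^-)$, are the correct ones and the statement's ratio should be understood with that convention.
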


Every face $F$ (facet or not) of a Minkowski sum $Q^+ + Q^-$ decomposes uniquely as a sum $F^+ +F^-$ of faces of $Q^+$ and $Q^-$. We call bi-dimension  of $F$ the pair $(\dim(F^+),\dim(F^-))$. 
Proposition~\ref{prop:minkowski} implies that every facet of $Q$ other than the two bases induces a facet in the Minkowski sum $Q^+ + Q^-$. More precisely, the dual graph of $Q^+ +Q^-$ equals the dual graph of $Q$ with the two base facets removed. Since the facets of $Q^+ + Q^-$ corresponding to facets of $Q$ adjacent to $Q^+$ (respectively, to  $Q^-$) are those of bi-dimension $(d-1,*)$ (respectively,  $(*,d-1)$), the $d$-step property for $Q$ translates to the following property for the pair of polytopes $(Q^+,Q^-)$:

\begin{definition}
\label{def:dstep-minkowski}
Let $Q^+$ and $Q^-$ be two polytopes of dimension $d-1$ with the same or parallel affine spans.
The pair $(Q^+, Q^-)$  \emph{has the $d$-step property} if there is a sequence $F_1,F_2,\dots,F_{k}$ of facets of $Q^+ + Q^-$, with $k\le d-1$, such that:
\begin{itemize}
\item The bi-dimension of $F_1$ and the bi-dimension of $F_{k}$ are $(d-1,*)$ and $(*,d-1)$.
\item $F_i$ is adjacent to $F_{i+1}$ for all $i$.
\end{itemize}
\end{definition}

That is to say, we ask whether we can go from an $F$ to an $F'$ in $d-2$ steps, where $F$ and $F'$ are facets of $Q^+ + Q^-$ of a special type.

\begin{proposition}
\label{prop:antiprisms-to-sums}
A prismatoid with base facets $Q^+$ and $Q^-$ has the $d$-step property if, and only if, the pair $(Q^+, Q^-)$ has the $d$-step property.
\qed
\end{proposition}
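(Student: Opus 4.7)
The plan is to upgrade the remark made just before Definition~\ref{def:dstep-minkowski} into a rigorous statement: the dual graph of $Q^+ + Q^-$ equals the dual graph of $Q$ with the two vertices $Q^+$ and $Q^-$ removed, and under this identification the facets of $Q^+ + Q^-$ of bi-dimension $(d-1,*)$ (respectively $(*,d-1)$) correspond precisely to the facets of $Q$ adjacent to $Q^+$ (respectively $Q^-$). Once this is in place, the equivalence asserted in the proposition is immediate.

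For the correspondence, I would fix an intermediate hyperplane $H$ parallel to the two bases and apply Proposition~\ref{prop:minkowski} to identify $Q\cap H$ combinatorially with $Q^+ + Q^-$. Any facet $G$ of $Q$ other than $Q^\pm$ must have vertices in both bases---otherwise $G$ would be contained in $Q^+$ or $Q^-$, hence equal to it by dimension count---so $G\cap H$ is a $(d-2)$-face of $Q\cap H$, i.e., a facet of $Q^+ + Q^-$. The map $\Phi(G):=G\cap H$ has an inverse that sends a facet $F=F^++F^-$ of $Q^+ + Q^-$, via its unique Minkowski decomposition, to $\conv(F^+\cup F^-)$; this is a facet of $Q$ because $F^+$ and $F^-$ lie in two parallel but disjoint affine hyperplanes and hence $\dim\conv(F^+\cup F^-)=\dim F^++\dim F^-+1=d-1$. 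Two non-base facets $G,G'$ of $Q$ share a ridge of $Q$ if and only if $(G\cap G')\cap H=\Phi(G)\cap \Phi(G')$ is a $(d-3)$-face of $Q\cap H$, so $\Phi$ also preserves adjacencies, giving the desired graph isomorphism.

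To match endpoints, a non-base facet $G$ of $Q$ is adjacent to $Q^+$ exactly when $G\cap Q^+$ is a facet of $Q^+$, which translates to $\Phi(G)=F^++F^-$ having its $F^+$-component of the codimension specified by the ``bi-dimension $(d-1,*)$'' condition in Definition~\ref{def:dstep-minkowski}, and symmetrically for $Q^-$. Therefore a dual path $Q^+=G_0,G_1,\ldots,G_r=Q^-$ in $Q$ of length $r\le d$ produces, after deleting the two endpoints, a sequence of $r-1\le d-1$ facets of $Q^+ + Q^-$ whose consecutive pairs are adjacent and whose first and last facets have the required bi-dimensions; the reverse passage is identical. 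The step I expect to require the most care is verifying uniqueness and dimensional additivity of the Minkowski decomposition of faces in our parallel-hyperplane setup, since this is what makes $\Phi$ a bijection rather than merely a surjection, and it is also what makes the bi-dimension condition at the endpoints well-defined.
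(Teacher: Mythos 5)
Your proposal follows essentially the paper's own route: the proposition is stated with no separate proof precisely because it is the content of the paragraph preceding Definition~\ref{def:dstep-minkowski}, which you are formalizing (the dual graph of $Q^++Q^-$ is the dual graph of $Q$ with the two bases deleted, and adjacency of a facet of $Q$ to a base corresponds to the bi-dimension condition at the endpoints of the path). The one genuine slip is the dimension formula $\dim\conv(F^+\cup F^-)=\dim F^++\dim F^-+1$. For $F^+$ and $F^-$ lying in parallel disjoint hyperplanes the correct identity is $\dim\conv(F^+\cup F^-)=\dim(F^++F^-)+1$, and the additivity $\dim(F^++F^-)=\dim F^++\dim F^-$ that your formula presupposes is exactly the transversality condition that the paper treats as a separate, non-automatic hypothesis in Section~\ref{sec:sphere-maps}; it can fail. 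Fortunately you do not need it: since $F=\lambda_1F^++\lambda_2F^-$ is a facet of the $(d-1)$-dimensional polytope $Q\cap H$, one has $\dim(F^++F^-)=d-2$ outright, hence $\dim\conv(F^+\cup F^-)=d-1$. With that correction, the ``step requiring the most care'' that you flag at the end largely evaporates: uniqueness of the decomposition $F=F^++F^-$ is the standard fact that the summands are the faces of $Q^+$ and $Q^-$ maximizing any functional supporting $F$, no dimensional additivity is required anywhere, and your identification of the endpoint condition---$G$ adjacent to $Q^+$ exactly when $G\cap Q^+$ is a facet of $Q^+$, i.e.\ when the $F^+$-component has dimension $d-2$---is the right one.
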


The good thing about Proposition~\ref{prop:antiprisms-to-sums} is that it reduces the dimension by one. In order to study our
prismatoid of dimension five we only need to understand its two bases, of dimension four. Before doing this let us make 
one more translation, to the language of \emph{normal fans} or \emph{normal maps}.

The normal cone of a face $F$ of a polytope $P\subset\real^d$ is the set of linear functionals
\[
\{\phi\in(\real^d)^* : \phi|_F \text{ is constant  and } \max_{p\in P} \phi(p) = \max_{p\in F} \phi(p)\}.
\]
The normal cones of faces of $P$ form a complex $\normal(P)$ of polyhedral cones called the \emph{normal fan} of  $P$. 
We call the intersection of this fan with the unit sphere the \emph{normal map} of $P$. (This is called the \emph{gaussian map} of $P$ in~\cite{Fogel-Halperin-Weibel}. Incidentally, the reading of~\cite{Fogel-Halperin-Weibel} was our initial inspiration for attempting to disprove the Hirsch conjecture via prismatoids).
The normal map of a $d$-polytope $P$ is a polyhedral complex of  spherical polytopes  decomposing $S^{d-1}$. We call such objects \emph{geodesic maps}. (A priori, a geodesic map may not be the normal map of any polytope).

The following definition and theorem translate Proposition~\ref{prop:antiprisms-to-sums} into the language of geodesic maps. Note that since our polytopes $Q^+$ and $Q^-$ are meant to be facets of a $d$-polytope, their normal maps lie in the sphere $S^{d-2}$.

\begin{definition}
\label{defi:gauss}
Let $\gauss^+$ and $\gauss^-$ be two geodesic maps in the sphere $S^{d-2}$. 
\begin{enumerate}
\item The \emph{common refinement} of $\gauss^+$ and $\gauss^-$ is the geodesic map whose cells are all the possible intersections of a cell of $\gauss^+$ and a cell of $\gauss^-$.
\item The pair $(\gauss^+, \gauss^-)$ has the \emph{$d$-step property} if the 1-skeleton of their common refinement contains a path of length at most
$d-2$ from a vertex of $\gauss^+$ to a vertex of $\gauss^-$.
\end{enumerate}
\end{definition}

\begin{theorem}
\label{label:gauss}
Let $Q^+$ and $Q^-$ be two polytopes in $\real^{d-1}$, with normal maps $\gauss^+$ and $\gauss^-$.
The pair $(Q^+,Q^-)$ has the $d$-step property if and only if the pair $(\gauss^+, \gauss^- )$ has the $d$-step property.
\end{theorem}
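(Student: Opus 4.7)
\medskip

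The plan rests on one classical fact: the normal fan of a Minkowski sum is the common refinement of the summands' normal fans, $\normal(Q^+ + Q^-)=\normal(Q^+)\wedge \normal(Q^-)$. Intersecting with $S^{d-2}$ this becomes $\gauss(Q^+ + Q^-)=\gauss^+\wedge \gauss^-$, so the problem reduces to translating Definition~\ref{def:dstep-minkowski} (phrased on facets of $Q^+ + Q^-$) into the statement of Definition~\ref{defi:gauss} (phrased on vertices of the common refinement). First I would recall face-cone duality: the face lattice of $Q^+ + Q^-$ is anti-isomorphic to the fan $\normal(Q^+) \wedge \normal(Q^-)$, with a $k$-face of $Q^+ + Q^-$ corresponding to a $(d-1-k)$-cone, and hence to a $(d-2-k)$-cell of the spherical complex $\gauss^+ \wedge \gauss^-$.

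Under that correspondence, facets of $Q^+ + Q^-$ (of dimension $d-2$) match vertices of $\gauss^+\wedge \gauss^-$, and ridges (of dimension $d-3$) match edges. In particular two facets share a ridge precisely when the two vertices of the common refinement are joined by an edge of the $1$-skeleton. Next I would identify the distinguished facets. A face of $Q^+ + Q^-$ decomposes as $F^+ + F^-$, where $F^\pm$ is the face of $Q^\pm$ selected by any outer normal in the relative interior of the corresponding cone. A one-dimensional cone of $\normal(Q^+)\wedge \normal(Q^-)$ (that is, a vertex of the common refinement) lies inside cones of $\normal(Q^+)$ and $\normal(Q^-)$ of dimensions $k^+$ and $k^-$ with $k^+ + k^- = d$; correspondingly $\dim F^\pm = d-1-k^\pm$. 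So a vertex of the refinement is itself a vertex of $\gauss^+$ exactly when $k^+ = 1$, i.e.\ when $F^+$ is a facet of $Q^+$ -- the ``bi-dimension $(d-1,*)$'' condition of Definition~\ref{def:dstep-minkowski}. The symmetric statement holds for $\gauss^-$.

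Putting these together proves both directions at once. A sequence $F_1,\dots,F_k$ of facets of $Q^+ + Q^-$ with $k\le d-1$, consecutive terms adjacent, with $F_1$ of bi-dimension $(d-1,*)$ and $F_k$ of bi-dimension $(*,d-1)$, corresponds via the bijection to a sequence of vertices $v_1,\dots,v_k$ in $\gauss^+\wedge \gauss^-$, consecutive ones joined by an edge, with $v_1\in \gauss^+$ and $v_k\in \gauss^-$; this is a path of length $k-1\le d-2$ in the $1$-skeleton of the common refinement, as required by Definition~\ref{defi:gauss}(2). Reading the correspondence in reverse produces the converse implication, and combining with Proposition~\ref{prop:antiprisms-to-sums} then yields the theorem.

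The only place requiring care is the dimension count identifying vertices of the common refinement with vertices of $\gauss^+$ or $\gauss^-$: everything else is straightforward bookkeeping through face-cone duality. I do not expect any genuine obstacle, since the Minkowski-sum/normal-fan identity is standard (see e.g.\ Ziegler~\cite{Ziegler:LecturesPolytopes}).
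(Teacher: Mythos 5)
Your argument is exactly the translation the paper has in mind --- indeed the paper states Theorem~\ref{label:gauss} without proof, treating as standard the identification of $\normal(Q^++Q^-)$ with the common refinement of $\normal(Q^+)$ and $\normal(Q^-)$ and the resulting dimension-reversing bijection sending facets of $Q^++Q^-$ to vertices of $\gauss^+\wedge\gauss^-$, ridges to edges, and bi-dimension $(d-1,*)$ to ``is a vertex of $\gauss^+$''. One minor inaccuracy: your claim that every vertex of the common refinement satisfies $k^++k^-=d$ is a transversality condition that can fail (e.g.\ when a ray is a ray of both fans simultaneously), but your actual deduction only uses $\dim F^\pm=d-1-k^\pm$, so nothing breaks.
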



Using the  formalism of geodesic maps, Santos, Stephen and Thomas~\cite{4prismatoids} have shown that $4$-prismatoids have the $d$-step property. That is to say, they show that if a pair of graphs is embedded in the $2$-sphere it is always possible to go from a vertex of one to a vertex of the other traversing at most two edges of their common refinement. The following example, which can be understood as a pair of geodesic maps in the flat torus, shows that the reason for this is not ``local''.

\begin{example}[A pair of periodic maps in the plane without the $d$-step property]
\label{exm:plane-gauss}
\rm
Figure~\ref{fig:2d-map} shows (portions of) two maps drawn in the plane, one with its 1-skeleton in black and the other in grey. They are meant to be periodic and have 
the same symmetry group, vertex-transitive in both. The pair does not have the $d$-step property. We cannot go in two steps from a vertex $v$ of the black map to a vertex of the grey map.
\begin{figure}[htb]
\begin{center}
\includegraphics[scale=.8]{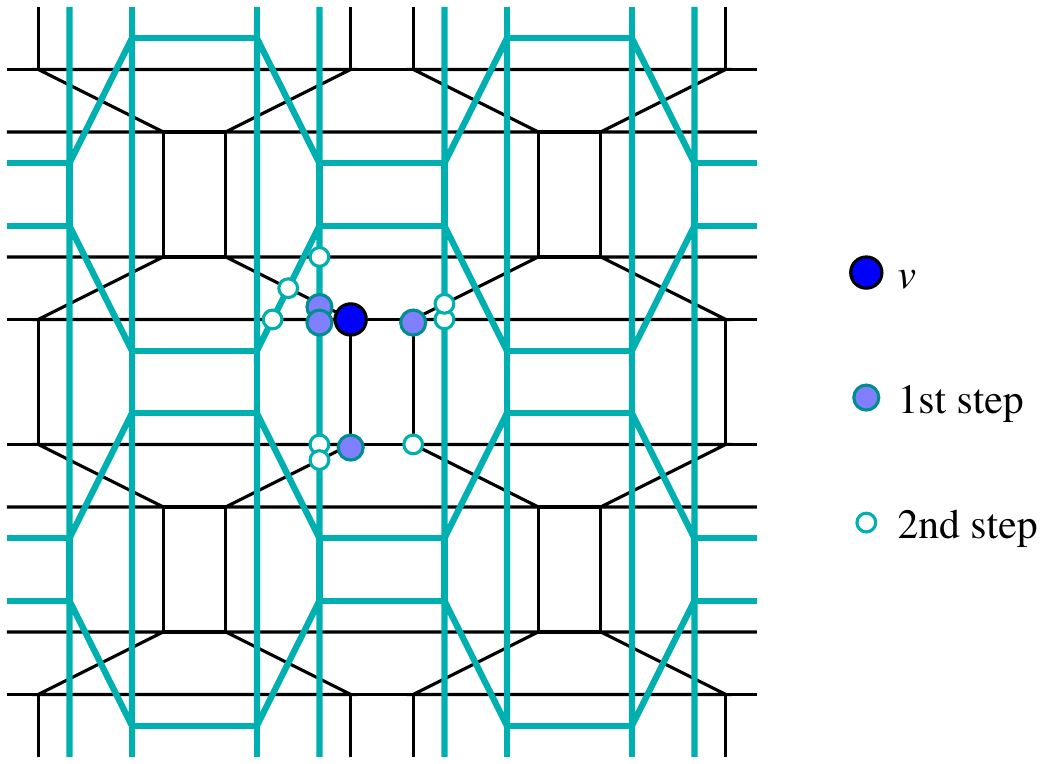}
\caption{A pair of periodic maps in the plane without the $d$-step property}
\label{fig:2d-map}
\end{center}
\end{figure}
%
%
%
\end{example}

\subsection{A pair of geodesic maps in $S^3$ without the $d$-step property}
\label{sub:maps}





Let us now concentrate on the facet $Q^+$ of our $5$-dimensional prismatoid $Q$. 
In the light of Proposition~\ref{prop:antiprisms-to-sums} we can think of $Q^+$ as lying in $\real^4$, and omit its last coordinate.
%
%

\begin{lemma}
\label{lemma:Q^+-facets}
$Q^+$ has 32 facets, given by the following inequalities:
\begin{eqnarray*}
\pm 5x_1 \pm x_2 \pm 2x_3 \pm x_4 \le 90, \\
\pm x_1 \pm 5x_2 \pm x_3 \pm 2x_4 \le 90.
\end{eqnarray*}
The symmetry group $\Sigma^+$ of $Q^+$ acts transitively on them.
\end{lemma}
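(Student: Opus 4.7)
My plan is to verify the 32-facet claim in three stages: reduce by symmetry, check a single representative, then argue completeness.

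\textbf{Paragraph 1 (Symmetry reduction).} The group $\Sigma^+$ is generated by sign changes of each coordinate $x_i$ together with the swap $(x_1,x_2,x_3,x_4)\mapsto(x_2,x_1,x_4,x_3)$, and has order $32$. A direct inspection shows that the $\Sigma^+$-orbit of the representative $5x_1+x_2+2x_3+x_4\le 90$ is exactly the set of $32$ listed inequalities: the sign-change subgroup acts simply transitively on the first $16$-tuple and the swap exchanges the two $16$-tuples. This proves the transitivity claim directly, and reduces everything to showing that the single representative $5x_1+x_2+2x_3+x_4\le 90$ is valid on $Q^+$ and defines a facet of it.

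\textbf{Paragraph 2 (Representative is a facet).} Since the $16$ first-type inequalities together are equivalent to the single constraint $5|x_1|+|x_2|+2|x_3|+|x_4|\le 90$ (and similarly $|x_1|+5|x_2|+|x_3|+2|x_4|\le 90$ for the other $16$), validity on all $24$ vertices is reduced to an easy arithmetic check on the six orbit representatives of the vertices; each one attains equality in exactly one of the two inequalities. For the representative $5x_1+x_2+2x_3+x_4\le 90$, equality holds at exactly the six vertices $\mathit{1}^+,\mathit{5}^+,\mathit{9}^+,\mathit{13}^+,\mathit{17}^+,\mathit{21}^+$, and these span an affine $3$-hyperplane: the three differences $\mathit{5}^+-\mathit{1}^+=(-18,0,45,0)$, $\mathit{9}^+-\mathit{1}^+=(-3,15,0,0)$ and $\mathit{21}^+-\mathit{1}^+=(-8,0,0,40)$ are linearly independent (the $3\times 3$ minor on columns $2,3,4$ equals $-27000\ne 0$). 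Hence the representative defines a facet, and by $\Sigma^+$-orbit so do all $32$ inequalities.

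\textbf{Paragraph 3 (Completeness---the main obstacle).} The core difficulty is showing that these are \emph{all} the facets. The cleanest route is to invoke Theorem~\ref{thm:facets}: by part (3), the only facets of $Q$ adjacent to $Q^+=\mathrm{A}$ in the dual graph are the $32$ facets in the $\Sigma^+$-orbit $\mathrm{B}\cup\mathrm{B}'$. Since facets of the facet $Q^+$ of $Q$ correspond bijectively to facets of $Q$ that share a ridge with $Q^+$, and substituting $x_5=1$ into the defining inequality of $\mathrm{B}_{+,+,+,+}$, namely $\tfrac{315}{2}-\tfrac{135}{2}x_5\ge 5x_1+x_2+2x_3+x_4$, produces $90\ge 5x_1+x_2+2x_3+x_4$ (and analogously for the primed variants), the stated $32$ inequalities are exactly the facets of $Q^+$. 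A self-contained alternative would be to show that the polytope $Q'$ cut out by the $32$ inequalities has exactly the $24$ listed vertices, for instance via a vertex-facet incidence count---each vertex of $Q^+$ lies on exactly $8$ of the $32$ listed facets, for a total of $192=32\cdot 6$ incidences consistent with each facet having $6$ vertices---combined with case analysis of where four or more of the inequalities can simultaneously be tight.
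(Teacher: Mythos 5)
Your Paragraphs 1 and 2 are essentially the paper's argument (transitivity of $\Sigma^+$ reduces everything to a single representative; one checks validity on the $24$ vertices and that the six equality vertices affinely span a hyperplane), and the computations there are correct. One parenthetical is false, though: it is not true that each vertex attains equality in exactly one of the two constraints $5|x_1|+|x_2|+2|x_3|+|x_4|\le 90$ and $|x_1|+5|x_2|+|x_3|+2|x_4|\le 90$; the sixteen vertices $\mathit{9}^+,\dots,\mathit{24}^+$ attain equality in both (e.g.\ $(15,15,0,0)$ gives $90$ on each side). This slip is not load-bearing, since you correctly identify the six vertices on which the specific representative $5x_1+x_2+2x_3+x_4\le 90$ is tight.

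The completeness step is where you diverge from the paper, and neither of your two routes is satisfactory. Invoking Theorem~\ref{thm:facets} is logically sound (no circularity) but structurally backwards: that theorem is the entire content of the first proof of Theorem~\ref{thm:prismatoid}, whereas Lemma~\ref{lemma:Q^+-facets} is a step in the second, independent proof; leaning on it defeats the purpose of the section and replaces a statement about the $4$-polytope $Q^+$ with one about all $322$ facets of the $5$-polytope $Q$. Your ``self-contained alternative'' is not a proof: the incidence count $192=32\cdot 6$ is merely consistent with the claim and cannot exclude extra facets or extra vertices, and the ``case analysis of where four or more inequalities are tight'' is precisely the missing content. The paper closes the gap with a short self-contained argument you should adopt: the identities $\frac{8}{3}v_{\mathit{5}^+}-\frac{5}{3}v_{\mathit{1}^+}=3v_{\mathit{17}^+}-2v_{\mathit{9}^+}=4v_{\mathit{13}^+}-3v_{\mathit{21}^+}$ show that three rays through pairs of the six vertices meet at a common point, so the representative facet $F$ is combinatorially a triangular prism and therefore has exactly five ridges; one then verifies directly that the five facets of $Q^+$ across those ridges all belong to the stated list of $32$. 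Since $\Sigma^+$ acts transitively on the list, every listed facet has all of its neighbors in the list, and connectivity of the dual graph of $Q^+$ forces the list to be complete.
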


Observe that Lemma~\ref{lemma:Q^+-facets} describes $Q^+$ as the intersection of two cross-polytopes, so its polar is the common convex hull of two combinatorial $4$-cubes. This partially explains why this polar is a \emph{cubical polytope} (see Remark~\ref{rem:cubical} below).

\begin{proof}
That $\Sigma^+$ acts transitively on these inequalities is clear from the  form of them and the description of $\Sigma^+$ in Section~\ref{sec:symmetry}. Symmetry has the consequence that in order to prove that all the inequalities define facets we just need to consider one of them.
Take for instance:
\[
5x_1 + x_2 + 2x_3 + x_4 \le 90.
\]
Direct inspection shows that the inequality is valid on the $24$ vertices of $Q^+$, and that it is met with equality precisely in the following six:
\[
\bordermatrix{
&x_1&x_2&x_3&x_4\cr
\mathit{1}^+&  18&   0&   0&   0 \cr
\mathit{5}^+&   0&   0&  45&   0    \cr
\mathit{9}^+&  15&  15&   0&   0   \cr 
\mathit{13}^+&   0&   0&  30&  30    \cr
\mathit{17}^+&   0&  10&  40&   0    \cr
\mathit{21}^+&  10&   0&   0&  40    \cr
}
\]
Since the top $4\times 4$ submatrix is regular, these six points span the affine hyperplane $5x_1 + x_2 + 2x_3 + x_4 = 90$, hence they define a facet $F$.

We still need to check that there are no other facets apart from the $32$ in the statement. For this consider the following equalities, in which $v_{i^+}$ represents (the actual vector of coordinates of) the vertex labeled ${i}^+$:
\[
\frac{8}{3}v_{\mathit{5}^+} - \frac{5}{3} v_{\mathit{1}^+} =
3v_{\mathit{17}^+} - 2v_{\mathit{9}^+}=
4 v_{\mathit{13}^+} - 3 v_{\mathit{21}^+}=
o.
\]
These equalities say that the rays $\overrightarrow{v_{\mathit{1}^+} v_{\mathit{5}^+}}$, 
$\overrightarrow{v_{\mathit{9}^+} v_{\mathit{17}^+}}$,  and $\overrightarrow{v_{\mathit{21}^+} v_{\mathit{13}^+}}$ collide at the point  $o=(-30,0,120,0,1)$,
so that $F$ is combinatorially a triangular prism, as was shown
in Figure~\ref{fig:prism-b}.
%
%
So, we only need to check that the five neighbors of $F$ are in our stated list of facets. This is true since:
\begin{itemize}
\item $v_{\mathit{5}^+}, v_{\mathit{13}^+}, v_{\mathit{17}^+} \in \{-5x_1 + x_2 + 2x_3 + x_4 = 90\}$ ($x_1=0$ in these points).
\item $v_{\mathit{1}^+}, v_{\mathit{5}^+}, v_{\mathit{13}^+} v_{\mathit{21}^+} \in \{5x_1 - x_2 + 2x_3 + x_4 = 90\}$ ($x_2=0$ in them).
\item $v_{\mathit{1}^+}, v_{\mathit{9}^+}, v_{\mathit{21}^+} \in \{5x_1 + x_2 - 2x_3 + x_4 = 90\}$ ($x_3=0$ in them).
\item $v_{\mathit{1}^+}, v_{\mathit{5}^+}, v_{\mathit{9}^+}, v_{\mathit{17}^+} \in \{5x_1 +x_2 + 2x_3 - x_4 = 90\}$ ($x_4=0$ in them).
\item $v_{\mathit{9}^+}, v_{\mathit{13}^+}, v_{\mathit{17}^+}, v_{\mathit{21}^+} \in \{x_1 + 5x_2 + x_3 + 2x_4 = 90\}$.
\end{itemize}
\end{proof}

This description of the facets of $Q^+$ translates nicely to the normal map $\gauss^+$ of $Q^+$. For the sake of having nice integer coordinates, we consider this  map as lying in the sphere of radius $\sqrt{31}$ rather than radius $1$ (but we still denote  this dilated sphere $S^3$, for simplicity). That is, the vertices of $\gauss^+$ are the following 
points:
\[
p_{\pm,\pm,\pm,\pm}:=(\pm 5, \pm 1, \pm 2, \pm 1), \qquad \quad
p'_{\pm,\pm,\pm,\pm}:=(\pm 1, \pm 5, \pm 1, \pm 2).
\]
Observe they all lie in a torus $\{x_1^2 + x_2^2 = 26, x_3^2 + x_4^2 = 5\}$, which we denote $T^+$
It is quite natural then to picture them on the flat torus. This is what we do in Figure~\ref{fig:G_1}, where the horizontal and vertical coordinates represent the angle along the circles $\{x_1^2 + x_2^2 = 26\}$ and $\{x_3^2 + x_4^2 = 5\}$, respectively. 
\begin{figure}[htb]
\begin{center}
\input{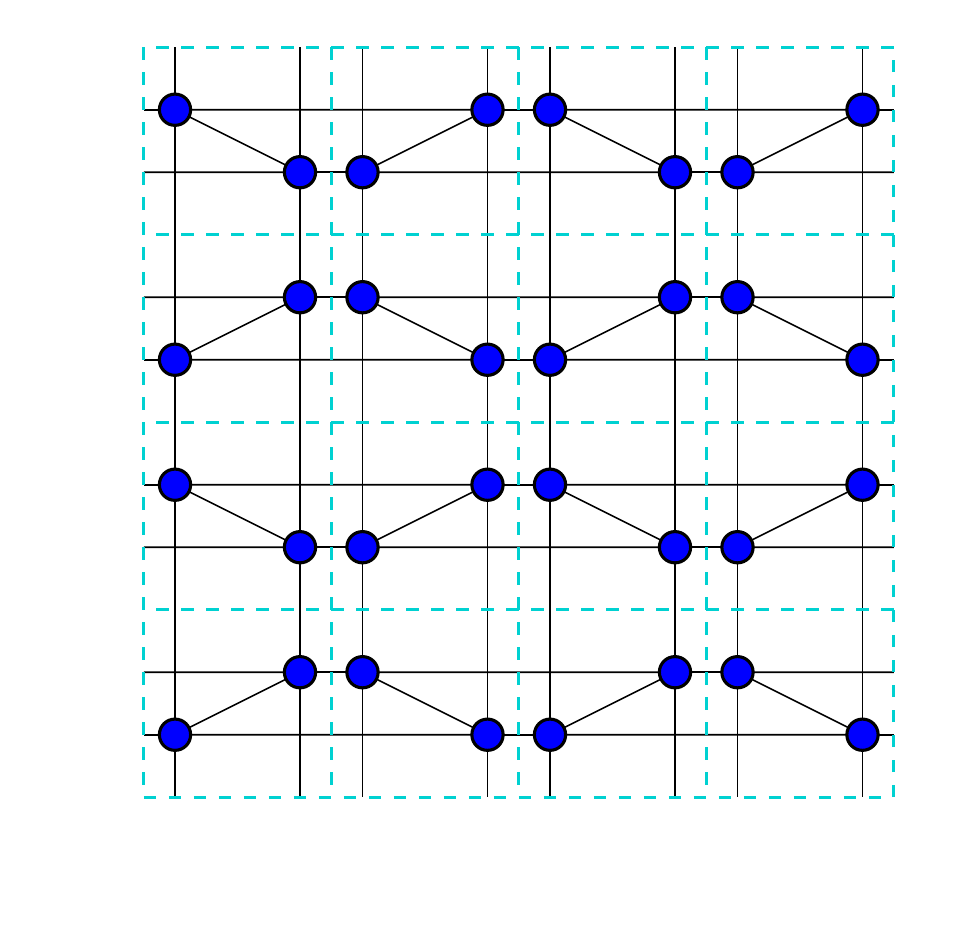_t}
\caption{$\gauss^+$, drawn on a flat torus}
\label{fig:G_1}
\end{center}
\end{figure}
The grey dashed lines divide the torus into its sixteen octants. The 32 dots are the vertices of $\gauss^+$, and the segments joining them represent its edges. 
Of course, the true edges in $S^3$ do not go along $T^+$. In particular, the crossings we see in the picture are an artifact.

As seen in the matrix defining $Q^+$, there are five orbits of facets of $\gauss^+$ (i.e.,~vertices of $Q^+$) modulo $\Sigma^+$. Representatives of them are, for example, the normal cones of vertices $\mathit{3}^+$, $\mathit{7}^+$, $\mathit{9}^+$, $\mathit{13}^+$, and $\mathit{22}^+$ which we highlight 
in Figure~\ref{fig:G_1-facets}. The eight ``vertical strips'' in the orbits of $\cone(\mathit{3}^+)$ and $\cone(\mathit{9}^+)$ glue together to form a (polyhedral) solid torus subdivided into eight slices, each of which is combinatorially a $3$-cube. The eight horizontal strips form a second solid torus. These two tori are glued along the sixteen diagonal edges and eight of the vertical rectangles in the pictures, leaving eight empty regions in between. These regions are filled in by the other eight cones, in the orbit of $\cone(\mathit{22}^+)$.

\begin{remark}
\label{rem:cubical}
\rm
All the facets of $\gauss^+$ are combinatorially equivalent to the  $3$-cube. That is, $Q^+$ is polar to a \emph{cubical polytope}. In fact, it was communicated to us by M.~Joswig and G.~Ziegler that the polar of $Q^+$ is one of the \emph{neighborly cubical polytopes} (with the graph of the $5$-cube) that they constructed in~\cite{JoswigZiegler} and had been previously found by Blind and Blind~\cite{BlindBlind}.
\end{remark}

\begin{figure}[htb]
\begin{center}
\begin{tabular}{cc}
\includegraphics[scale=0.45]{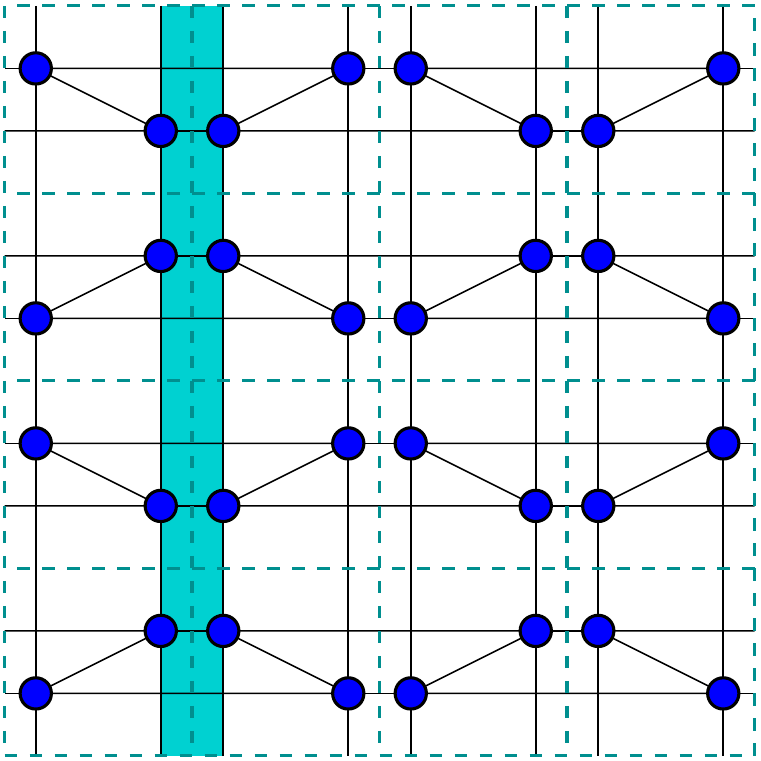}&
\includegraphics[scale=0.45]{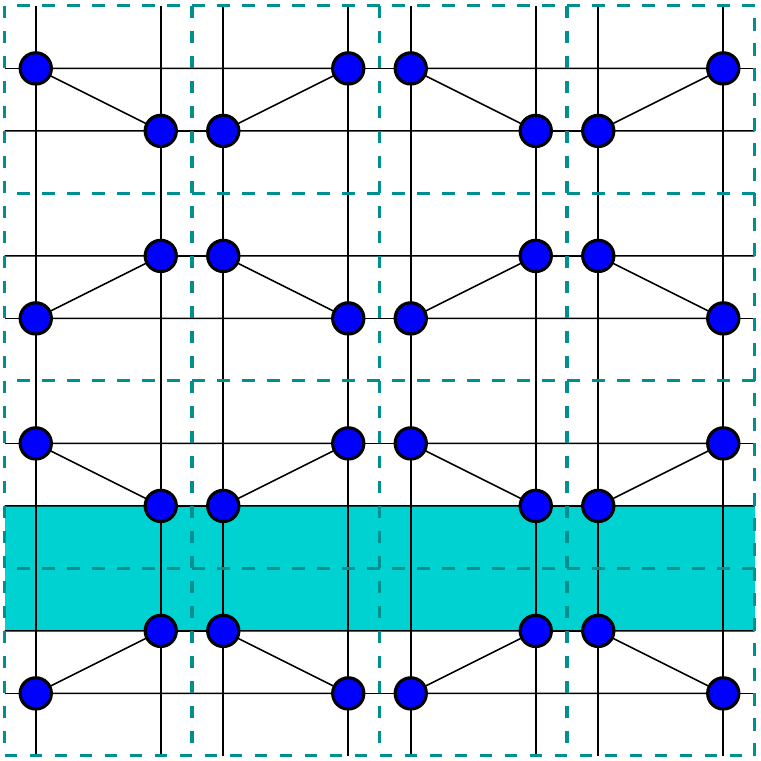}\\
$\cone(\mathit{3}^+)$ &$\cone(\mathit{7}^+)$ \\
\end{tabular}
\begin{tabular}{ccc}
\includegraphics[scale=0.45]{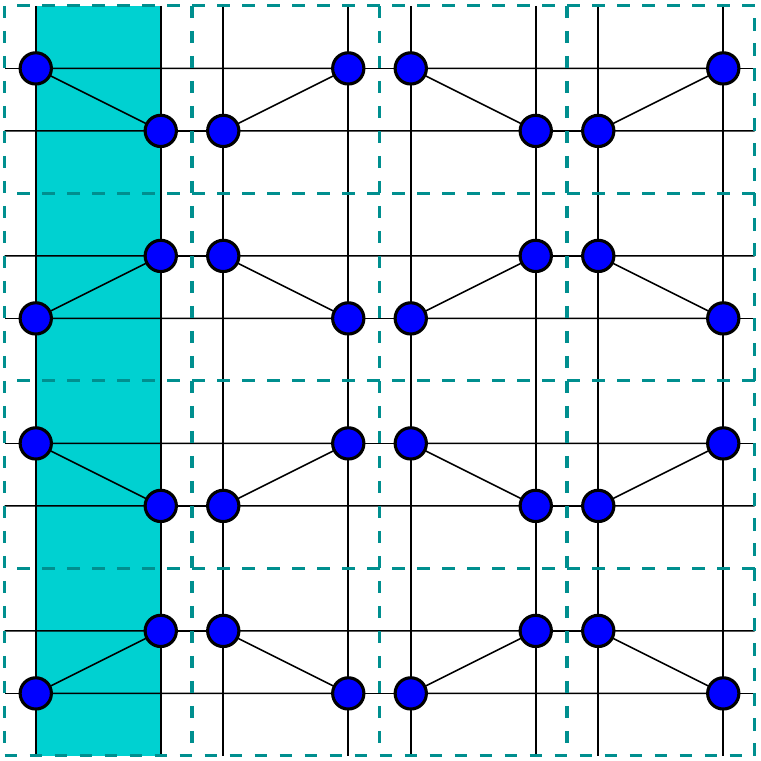}&
\includegraphics[scale=0.45]{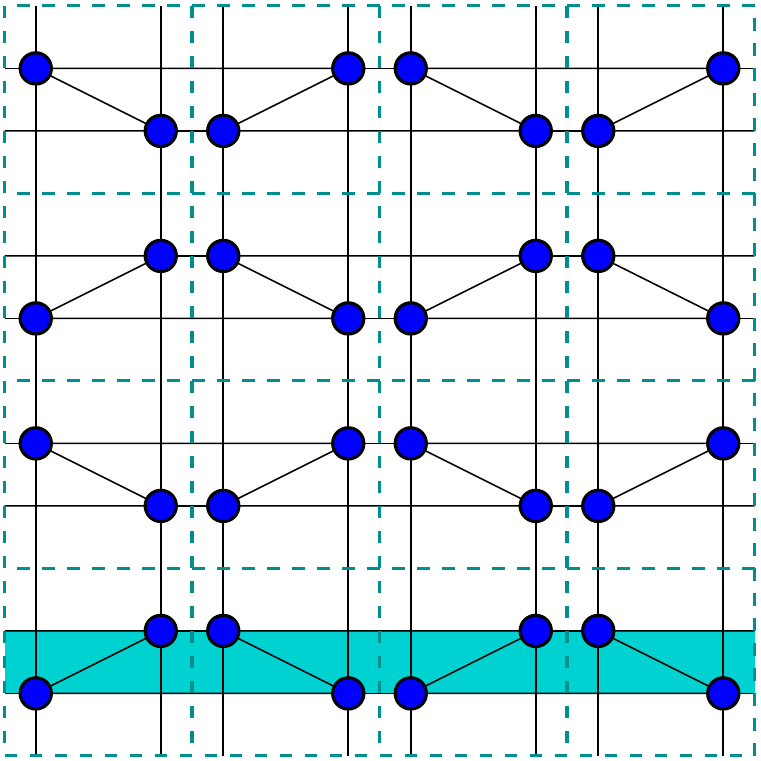}&
\includegraphics[scale=0.45]{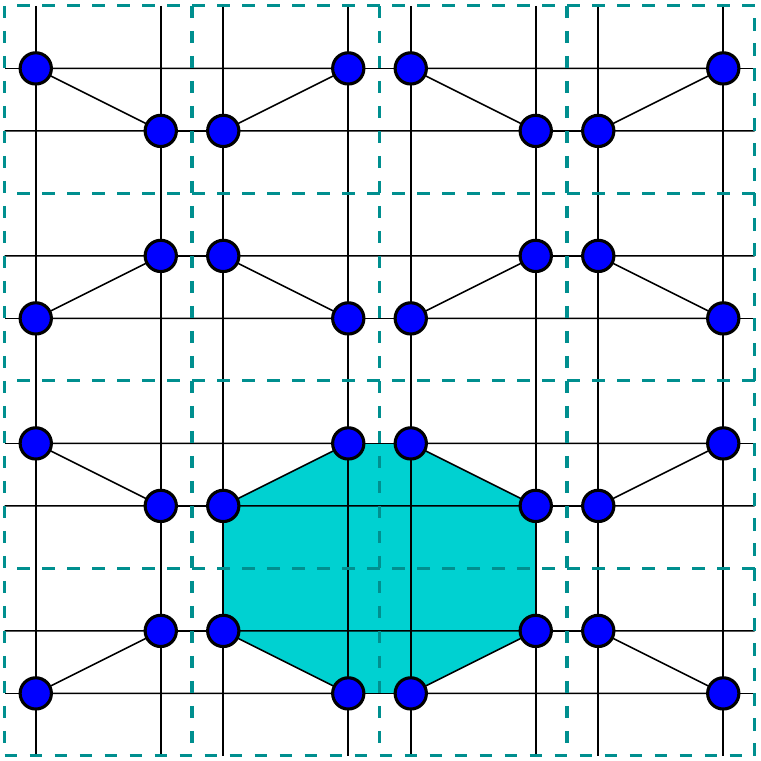}\\
$\cone(\mathit{9}^+)$ &$\cone(\mathit{13}^+)$ &$\cone(\mathit{22}^+)$ \\
\end{tabular}
\caption{Representatives of the five orbits of facets of $\gauss^+$.}
\label{fig:G_1-facets}
\end{center}
\end{figure}

Now that we understand $\gauss^+$, let us 
draw the normal map $\gauss^-$ of $Q^-$, whose vertices are (remember the symmetry that sends $Q^+$ to $Q^-$): 
\[
(\pm 1, \pm 2, \pm 5, \pm 1), \qquad \quad
(\pm 2, \pm 1, \pm 1, \pm 5).
\]
Although these points lie in a \emph{different} torus in $S^3$ than the vertices of $\gauss^+$, it is natural to draw them in the same flat torus (Figure~\ref{fig:Q}). Basically, what we are doing is projecting every point $(a e^{ix},be^{iy})$ of the sphere $S^3\subset \C^2$, to its longitude and latitude $(x,y)$ in the square. 
Observe the similarity betwen Figures~\ref{fig:Q} and~\ref{fig:2d-map}.
\begin{figure}[htb]
\begin{center}
\includegraphics[scale=0.8]{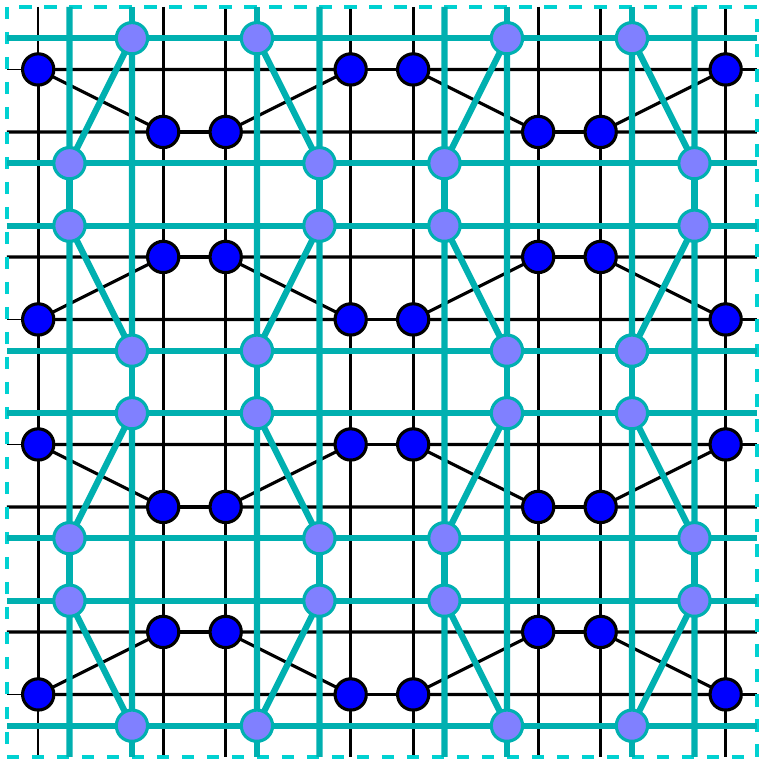}
\caption{The normal maps $\gauss^+$ (dark) and $\gauss^-$ (light), drawn on the same flat torus}
\label{fig:Q}
\end{center}
\end{figure}
%


\begin{lemma}
\label{lemma:vertex-in-cell}
\begin{enumerate}
\item Every vertex of $\gauss^-$ lies in the interior of one of the four facets of $\gauss^+$ of the orbit of $\cone(\mathit{7}^+)$.
\item Similarly, every vertex of $\gauss^+$ lies in the interior of one of the four facets of $\gauss^-$ of the orbit of $\cone(\mathit{7}^-)$.
\item If $v$ is a vertex of $\gauss^+$ and $C$ the facet of $\gauss^-$ containing it, no vertex of $C$ lies in a facet of $\gauss^+$ having $v$ as a vertex.
\end{enumerate}
\end{lemma}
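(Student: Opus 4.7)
My plan is to exploit the $\Sigma^+$-symmetry of both $\gauss^+$ and $\gauss^-$, together with the $Q^+\!\leftrightarrow\! Q^-$ involution $\tau:(x_1,x_2,x_3,x_4,x_5)\mapsto(x_4,x_3,x_1,x_2,-x_5)$ from Section~\ref{sec:symmetry}, in order to reduce each of the three statements to checking a single representative vertex. The crucial preparatory observation is that $\Sigma^+$ acts transitively on the $32$ vertices of $\gauss^+$: its sixteen sign changes exhaust the orbit $(\pm 5,\pm 1,\pm 2,\pm 1)$ of $(5,1,2,1)$, and the simultaneous swap $x_1\leftrightarrow x_2$, $x_3\leftrightarrow x_4$ then reaches the other half $(\pm 1,\pm 5,\pm 1,\pm 2)$. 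The analogous argument shows that $\Sigma^+$ is transitive on the $32$ vertices of $\gauss^-$. Since $\tau$ maps $\gauss^+$ onto $\gauss^-$ and sends $\mathit{7}^+=(0,0,0,45)$ to $(45,0,0,0)=\mathit{5}^-$, which is in the $\Sigma^+$-orbit of $\mathit{7}^-$, it interchanges the two orbits of $\cone(\mathit{7}^+)$ and $\cone(\mathit{7}^-)$, so parts (1) and (2) become equivalent.

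For part (1), I would take $w=(1,2,5,1)$ and tabulate $\langle w,v\rangle$ over the $24$ vertices $v\in\vertices(Q^+)$ listed in Table~\ref{table:prismatoid}. The outcome to verify is that the unique maximum equals $225$ and is attained at $\mathit{5}^+=(0,0,45,0)$, the runner-up being $\mathit{17}^+$ with value $220$ and every other value $\le 180$. By definition of the normal cone this forces $w$ into the relative interior of $\cone(\mathit{5}^+)$, which belongs to the $\Sigma^+$-orbit $\{\cone(\mathit{k}^+):k\in\{5,6,7,8\}\}$ of $\cone(\mathit{7}^+)$. Applying $\tau$ then yields part (2).

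For part (3), I would pick the representative $v=(5,1,2,1)$. By part (2), $v$ lies in the interior of the facet $C=\cone(\mathit{5}^-)$ of $\gauss^-$ (a computation analogous to the previous one shows $\langle v,\cdot\rangle$ peaks uniquely at $\mathit{5}^-=(45,0,0,0)$, with value $225$ and runner-up $220$ at $(40,0,10,0)$). Reading off the facets of $Q^-$ incident to $\mathit{5}^-$ from the $\tau$-image of Lemma~\ref{lemma:Q^+-facets}---they are the inequalities $2x_1\pm x_2\pm x_3\pm 5x_4\le 90$---the eight vertices of $C$ are $w=(2,\pm 1,\pm 1,\pm 5)$. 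For each such $w$, exactly one of $\mathit{7}^+=(0,0,0,45)$ and $\mathit{8}^+=(0,0,0,-45)$ realizes $\langle w,\cdot\rangle=45|w_4|=225$, whereas $\langle w,v'\rangle\le 220$ for every other $v'\in\vertices(Q^+)$ (the tightest competitor being $\mathit{21}^+$). Hence each $w$ lies in the interior of $\cone(\mathit{7}^+)$ or $\cone(\mathit{8}^+)$, and therefore in no other maximal cone of $\gauss^+$. On the other hand, the six facets of $\gauss^+$ having $v$ as a vertex are precisely $\cone(\mathit{k}^+)$ for $k\in\{1,5,9,13,17,21\}$, the six vertices of $Q^+$ on the facet $\{5x_1+x_2+2x_3+x_4=90\}$ normal to $v$. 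Since $\{7,8\}\cap\{1,5,9,13,17,21\}=\emptyset$, no vertex of $C$ lies in any facet of $\gauss^+$ incident to $v$, proving (3).

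The main obstacle in this plan is only arithmetic: one must confirm that the maxima appearing in parts (1) and (3) are \emph{strict}, since only strict maxima place the test vertex in the interior of a single maximal cone and, by the disjointness of interiors of distinct maximal cones in a normal fan, thus outside every other cone. The $\Sigma^+$-symmetry trims this verification down to a handful of inner-product comparisons per part, so the bookkeeping remains manageable.
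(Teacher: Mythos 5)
Your proof is correct and follows essentially the same route as the paper's: reduce to a single representative vertex via the transitive $\Sigma^+$-action and the symmetry interchanging $Q^+$ and $Q^-$, then verify that representative explicitly. The only difference is in the bookkeeping: where you certify interiority by exhibiting a strict unique maximizer of the relevant linear functional over the vertices of $Q^+$ (resp.\ $Q^-$), the paper instead checks strict satisfaction of the six facet inequalities of $\cone(\mathit{5}^-)$ and, for part (3), reads the incidences off its torus pictures, so your part (3) is, if anything, slightly more self-contained.
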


\begin{proof}
Since both maps have the same symmetry group (the group $\Sigma^+$ of the previous section) and the group is transitive on the vertices, we only need to prove the lemma for a single vertex $v$. We take $v=(5,1,2,1)$ and show that it is contained in the interior of the facet $\cone(\mathit{5}^-)$. By definition, 
the vertices of $\gauss^-$ on this facet are those satisfying the equation $45 x_1 = 90$, that is, $x_1=2$. They are the eight vertices of the form:
\[
(2, \pm1, \pm 1, \pm 5).
\]
Hence, the facet inequality description of $\cone(\mathit{5}^-)$ is:
\[
\cone(\mathit{5}^-) = \{(p_1,p_2,p_3,p_4)\in S^3 : \pm 2 p_2 \le p_1, \pm 2 p_3 \le  p_1, \pm 2 p_4 \le 5 p_1\}.
\]
The proof of part one (and two) finishes by noticing that $(5,1,2,1)$ satisfies these six inequalities strictly.

For part three, we look at what facets of $\gauss^+$ contain the vertices of $\cone(\mathit{5}^-)$. They have to be in the $\Sigma^+$-orbit of  $\cone(\mathit{7}^+)$ 
and the pictures (see Figure~\ref{fig:conclusion}) tell us that they are the facets
$\cone(\mathit{7}^+)$  and  $\cone(\mathit{8}^+)$, none of whose vertices is the original $v$.
\end{proof}
\begin{figure}[htb]
\begin{center}
\includegraphics[scale=0.6]{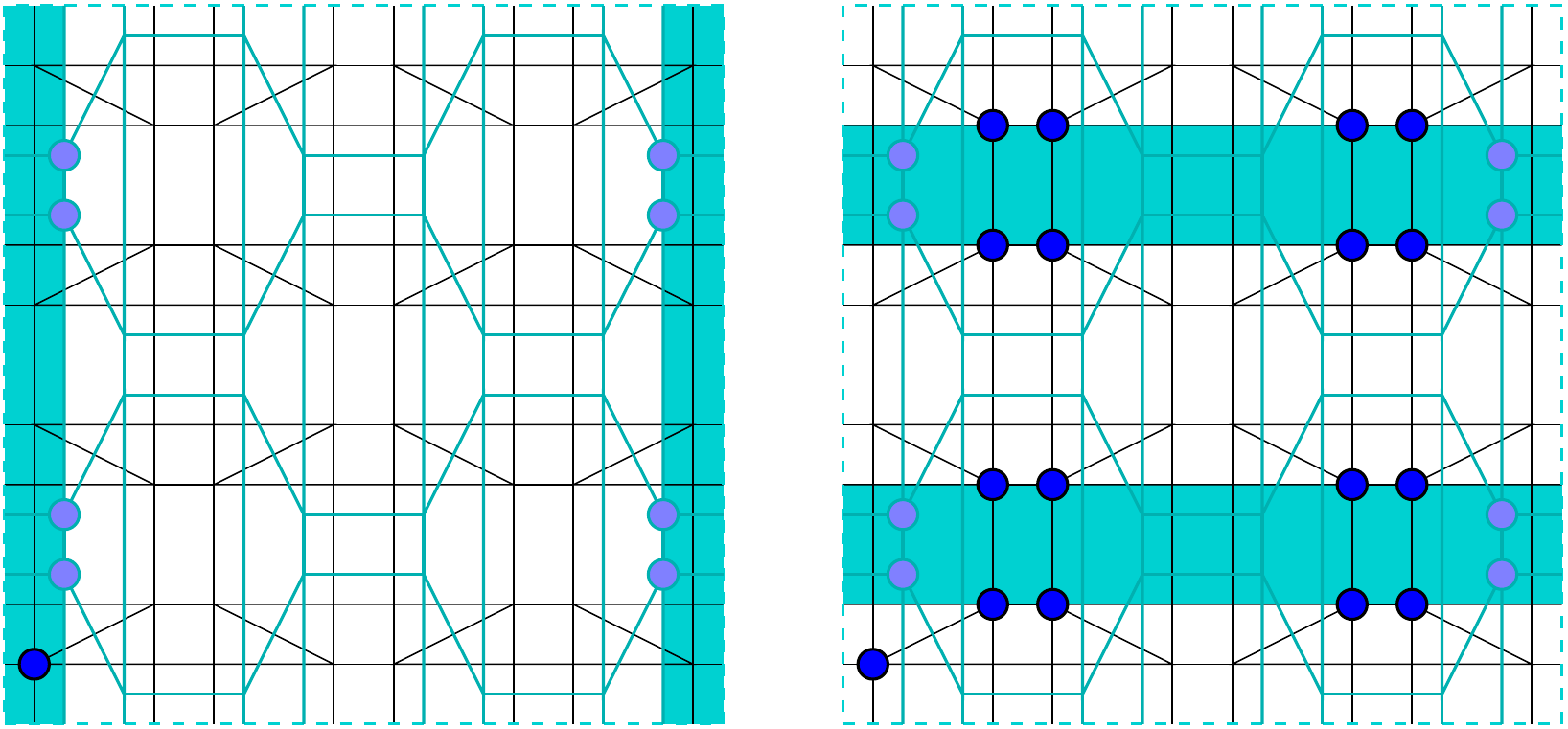}
\caption{Proof of Lemma~\ref{lemma:vertex-in-cell}. The facet $C$ of $\gauss^+$ containing $v_1$ (left) and the two facets of $\gauss^-$ containing the vertices of $C$ (right)}
\label{fig:conclusion}
\end{center}
\end{figure}

Part 3 of Lemma~\ref{lemma:vertex-in-cell} is the key to finishing the proof of Theorem~\ref{thm:prismatoid}. In the following statement we say that a pair $(\gauss^+, \gauss^-)$ of geodesic maps on $S^{d-2}$ are \emph{transversal} if whenever respective cells $C_1$ and $C_2$ of them intersect we have
\[
\dim(C_1) + \dim(C_2) = d-2 + \dim(C_1\cap C_2).
\]

\begin{proposition}
\label{prop:transversal}
Let $(\gauss^+, \gauss^-)$ be a transversal pair of geodesic maps in the $(d-2)$-sphere. If there is a path of length $d-2$ between a vertex $v_1$ of $\gauss^+$ and a vertex $v_2$ of $\gauss^-$, then the facet of $\gauss^-$ containing $v_1$ in its interior has $v_2$ as a vertex and the  facet of $\gauss^+$ containing $v_2$ in its interior has $v_1$ as a vertex.
\end{proposition}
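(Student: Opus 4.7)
The plan is to use transversality to force a strong monotonicity on any path of length $d-2$ in the common refinement. Transversality provides a dictionary between vertices $w$ of the common refinement and pairs $(A(w),B(w))$, where $A(w)$ (respectively $B(w)$) is the unique minimal cell of $\gauss^+$ (respectively $\gauss^-$) whose relative interior contains $w$. The transversality formula immediately yields $\dim A(w)+\dim B(w)=d-2$. In particular, for the vertex $v_1$ of $\gauss^+$ we have $A(v_1)=\{v_1\}$, so $B(v_1)$ is forced to be a facet of $\gauss^-$ --- precisely the facet of $\gauss^-$ containing $v_1$ in its interior. Symmetrically at $v_2$.

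Next I would analyse the edges of the common refinement. Each such edge $e$ is the transversal intersection $e=E_1^e\cap E_2^e$ with $\dim E_1^e+\dim E_2^e=d-1$. For each endpoint $w$ of $e$, the containments $A(w)\subseteq E_1^e$ and $B(w)\subseteq E_2^e$, together with the dimension deficit $d-2$ versus $d-1$, force exactly one of the equalities $A(w)=E_1^e$ or $B(w)=E_2^e$. A short case analysis then shows that, when traversing $e$ from $w_i$ to $w_{i+1}$, the quantity $\dim A(w_{i+1})-\dim A(w_i)$ lies in $\{-1,0,+1\}$ (and the change in $\dim B$ is the opposite sign, since the sum is constant).

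Now the arithmetic does the work. Along a path $v_1=w_0,w_1,\dots,w_{d-2}=v_2$ of length $d-2$, the total change in $\dim A$ is $(d-2)-0=d-2$, distributed over $d-2$ steps of size at most $+1$; hence every step must be exactly $+1$. Reading the case analysis in this regime, a $+1$ step across $e$ has $B(w_i)=E_2^e\supsetneq B(w_{i+1})$ and $A(w_i)\subsetneq E_1^e=A(w_{i+1})$; in particular $A(w_i)$ is a facet of $A(w_{i+1})$ and $B(w_{i+1})$ is a facet of $B(w_i)$.

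Iterating, $\{v_1\}=A(w_0)$ is a face of $A(w_{d-2})=A(v_2)$, which has dimension $d-2$ and is therefore the unique facet of $\gauss^+$ containing $v_2$ in its interior; so $v_1$ is a vertex of that facet. Symmetrically, $\{v_2\}=B(w_{d-2})$ is a face of $B(w_0)=B(v_1)$, giving the other half. The main obstacle is the edge-endpoint dimension bookkeeping that produces the $\pm 1$ step bound; once that is in place, a pigeonhole argument on the increments finishes everything.
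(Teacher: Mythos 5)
Your argument is correct and is essentially the paper's own proof: the pair $(\dim A(w),\dim B(w))$ is exactly what the paper calls the bi-dimension of a vertex of the common refinement, the edge bookkeeping forcing steps of size at most one in each coordinate is the same transversality computation, and the pigeonhole on the $d-2$ increments yielding a flag of $\gauss^+$ (and, dually, of $\gauss^-$) is the paper's concluding step. No changes needed.
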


\begin{proof}
For every cell $C_1\cap C_2$ of the common refinement of $\gauss^+$ and $\gauss^-$ we call $(\dim(C_1), \dim(C_2))$ the bi-dimension of $C_1\cap C_2$.
Let $v=C_1\cap C_2$ be a vertex incident to an edge $e=D_1\cap D_2$. Transversality implies that if the bi-dimension of $v$ is $(i,j)$, then the bi-dimension of $e$ is one of 
$(i+1,j)$ or $(i,j+1)$. (The former occurs if $C_1 \le D_1$ and $C_2=D_2$, and the latter if $C_1 = D_1$ and $C_2\le D_2$). As a consequence,
the bi-dimension of the other end of $e$ is one of $(i+1,j-1)$, $(i,j)$ or $(i-1,j+1)$. That is, bi-dimensions of consecutive vertices on a path differ by at most one unit on each coordinate.

Since $v_1$ and $v_2$ have bi-dimensions $(0,d-2)$ and $(d-2,0)$, along a path of $d-2$ steps connecting them
the first coordinate of the bi-dimension always increases and the second coordinate always decreases. This means that we move along a flag (a chain of cells each contained in the next) of $\gauss^+$, and at the end we finish in a facet having $v_1$ as a vertex. Similarly, the facet of $\gauss^-$ where we started has $v_2$ as a vertex.
\end{proof}

So, Theorem~\ref{thm:prismatoid} follows from Lemma~\ref{lemma:vertex-in-cell} and Proposition~\ref{prop:transversal} if we show that the pair 
 $(\gauss^+, \gauss^-)$ we are dealing with is transversal. The pair being transversal is equivalent to every proper face $F$ of $Q$ satisfying
 \[
 \dim(F\cap Q^+) + \dim(F\cap Q^-) = \dim(F)-1.
 \]
 This needs only be checked for facets, and was actually implicitly shown in the description of the facets of $Q$ given in Section~\ref{sec:prismatoid} (see Figure~\ref{fig:prismatoid-graph}, and the proof of parts 2 and 3 of Theorem~\ref{thm:facets}).
Alternatively, this second proof can be finished with a perturbation argument: Even if $(\gauss^+, \gauss^-)$ was not transversal, any sufficiently generic and sufficiently small rotation of one of the maps will make the pair transversal without destroying the property stated in part 3 of Lemma~\ref{lemma:vertex-in-cell}. 
 
 %

\section{An infinite family of non-Hirsch polytopes}
\label{sec:asymptotic}

In this section we show general procedures to construct new non-Hirsch polytopes from old ones. All the techniques we use are quite standard. Our result is that there is a fixed dimension $d$ in which we can build an infinite sequence of non-Hirsch polytopes with diameter exceeding the Hirsch bound by a fixed fraction. 

If we do not ask for a fixed dimension the result is straightforward (see, e.g.,~\cite[Prop.~1.3]{Klee:d-step}):

\begin{lemma}
\label{lemma:product}
If $P$ is a a $d$-polytope with $n$ facets and with diameter $(1+\epsilon)(n-d)$ for a certain $\epsilon>0$ then 
the $k$-fold product $P^k$ is a $kd$-polytope with $kn$ facets and with diameter $(1+\epsilon)(kn-kd)$. \qed
\end{lemma}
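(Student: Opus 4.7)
The plan is to unpack what happens to dimension, facet count, and graph diameter when we take the $k$-fold product $P^k = P \times \cdots \times P$, and observe that each behaves multiplicatively (or additively in the right sense) so that the ratio $\text{diam}/(n-d)$ is preserved.

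First I would record that the product of polytopes adds dimensions and adds facet counts: $\dim(P^k) = k\dim(P) = kd$, and the facets of $P^k$ are exactly the sets of the form $P \times \cdots \times F_i \times \cdots \times P$ where $F_i$ runs over facets of the $i$-th factor, so there are $kn$ facets. This is the standard product construction, and both properties are immediate from the fact that a face of a product is a product of faces.

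Next I would analyze the graph of $P^k$. Vertices of $P^k$ are $k$-tuples $(v_1,\dots,v_k)$ where each $v_i$ is a vertex of $P$, and two such tuples are joined by an edge in $P^k$ if and only if they differ in exactly one coordinate, with the differing pair being an edge of $P$. Hence the graph of $P^k$ is the Cartesian product (in the graph-theoretic sense) of $k$ copies of the graph of $P$. A standard fact about Cartesian products of graphs is that distances add: the distance between $(u_1,\dots,u_k)$ and $(v_1,\dots,v_k)$ equals $\sum_{i=1}^k d_P(u_i, v_i)$, where $d_P$ is the distance in the graph of $P$. Taking the maximum over all pairs gives $\mathrm{diam}(P^k) = k \cdot \mathrm{diam}(P)$.

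Combining the three observations, $\mathrm{diam}(P^k) = k(1+\epsilon)(n-d) = (1+\epsilon)(kn-kd)$, which is the claim. There is no real obstacle here; the only point worth being careful about is the formula for distances in a Cartesian product of graphs, but this follows immediately from the fact that each edge of $P^k$ changes precisely one coordinate, so any path projects coordinatewise to paths in the factors whose lengths sum to the length of the original path, and conversely these coordinatewise paths can be concatenated in any order to realize the sum of distances.
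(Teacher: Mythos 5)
Your proof is correct and is precisely the standard argument that the paper omits (the lemma is stated with a reference to Klee--Walkup and no written proof): dimensions and facet counts add under products, the graph of $P^k$ is the $k$-fold Cartesian product of the graph of $P$, and distances in a Cartesian product of graphs add coordinatewise, so the diameter gets multiplied by $k$. Nothing is missing.
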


In fixed dimension we use the following glueing lemma. It appears, for example, in~\cite{Holt:Many-polytopes}:

\begin{lemma}
\label{lemma:glue}
Let $P_1$ and $P_2$  be simple polytopes  of the same dimension $d$,  having respectively $n_1$ and $n_2$ facets, and with diameters $l_1$ and $l_2$. Then, there is a  simple $d$-polytope with $n_1+n_2-d$ facets and with diameter at least $l_1+l_2-1$.
\qed
\end{lemma}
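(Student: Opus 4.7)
The plan is to use the well-known \emph{blending} (or connected-sum) construction for simple polytopes, as developed in~\cite{Holt:Many-polytopes}. Given simple $d$-polytopes $P_1, P_2$ with simple vertices $v_i \in P_i$, this operation produces a new simple $d$-polytope $P := P_1 \#_{v_1, v_2} P_2$ by ``suitably identifying'' $P_1$ and $P_2$ near $v_1$ and $v_2$. A concrete realisation: truncate each $P_i$ with a hyperplane close to $v_i$ that separates $v_i$ from the other vertices; since $P_i$ is simple at $v_i$, this exposes a $(d-1)$-simplex as a new facet of the truncated polytope. Glue the two truncated polytopes along these simplex facets, matching their boundaries so that the $d$ original facets of $P_1$ through $v_1$ line up with the $d$ original facets of $P_2$ through $v_2$, and perturb generically to obtain a simple $d$-polytope.

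My strategy is to apply this construction with $v_i$ chosen so that $\d(v_i, w_i) = l_i$ in the graph of $P_i$ for some vertex $w_i \in P_i$ (such pairs exist by the definition of diameter). The facet count is the standard output of blending: each truncation added one simplex facet, the gluing removes these two (they become interior), and the pairwise identification of the $d$ facets on each side loses another $d$, giving $(n_1+1)+(n_2+1)-2-d = n_1+n_2-d$.

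For the diameter bound, both $w_1$ and $w_2$ survive as vertices of $P$. Consider any edge-path $\pi$ in $P$ from $w_1$ to $w_2$ and partition its vertices into a ``$P_1$-side'' (those coming from $V(P_1)\setminus\{v_1\}$) and a ``$P_2$-side''. Each edge of $\pi$ is either internal to one side or a ``bridge'' whose $P_1$-endpoint was adjacent to $v_1$ in $P_1$ (and whose $P_2$-endpoint was adjacent to $v_2$ in $P_2$). Since $w_1$ is on the $P_1$-side and $w_2$ on the $P_2$-side, $\pi$ uses at least one bridge. Let $y_1$ be the $P_1$-endpoint of the first bridge used by $\pi$: the initial sub-path of $\pi$ from $w_1$ to $y_1$ stays within $V(P_1)\setminus\{v_1\}$ and therefore has length at least $\d(w_1, y_1) \ge l_1-1$, the last inequality by the triangle inequality applied to the edge $y_1 v_1$ in $P_1$. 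Symmetrically, the tail of $\pi$ after the last bridge has length at least $l_2-1$. Counting also the (at least one) bridge edge in between gives $|\pi|\ge (l_1-1)+1+(l_2-1)=l_1+l_2-1$.

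The main obstacle is the technical verification that blending actually works as advertised: that one can cut and glue so that the combinatorial structure is as claimed, that a generic perturbation produces a simple polytope, and that no ``shortcut'' edges are introduced across the interface apart from the intended bridges (so that the bottleneck argument of the previous paragraph is valid). These standard but somewhat delicate points are treated carefully in~\cite{Holt:Many-polytopes} and rely crucially on the simplicity of $P_1$ and $P_2$ at the chosen vertices.
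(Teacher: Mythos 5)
Your proposal is correct and follows the standard blending (connected-sum) construction of Holt and Klee, which is exactly what the paper relies on: the paper states this lemma without proof, citing~\cite{Holt:Many-polytopes}. Both your facet count and the bottleneck argument for the diameter bound (any $w_1$--$w_2$ path must cross a bridge edge, costing at least $(l_1-1)+1+(l_2-1)$ steps) are sound, with the construction's technical details correctly deferred to the cited reference.
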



\begin{corollary}
\label{coro:glue}
If $P$ is a simple $d$-polytope with $n$ facets and diameter $l$, then for each $k$ there is a $d$-polytope $P_k$ with $k(n-d) + d$ facets and  diameter at least $k(l-1)+1$.
In particular, if $P$ is non-Hirsch then $P_k$ is non-Hirsch as well.
\end{corollary}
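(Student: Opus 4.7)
The plan is to prove Corollary~\ref{coro:glue} by a straightforward induction on $k$, using Lemma~\ref{lemma:glue} as the inductive step. The base case $k=1$ is immediate: take $P_1 := P$, which is a simple $d$-polytope with $n = 1\cdot(n-d) + d$ facets and diameter $l = 1\cdot(l-1) + 1$, matching the claimed bounds.

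For the inductive step, suppose $P_{k-1}$ has been constructed as a simple $d$-polytope with $(k-1)(n-d) + d$ facets and diameter at least $(k-1)(l-1) + 1$. Apply Lemma~\ref{lemma:glue} to $P_{k-1}$ and $P$: this yields a simple $d$-polytope $P_k$ whose number of facets equals
\[
\bigl((k-1)(n-d) + d\bigr) + n - d = k(n-d) + d,
\]
and whose diameter is at least
\[
\bigl((k-1)(l-1) + 1\bigr) + l - 1 = k(l-1) + 1.
\]
Simplicity is preserved by the glueing lemma, so the induction can continue.

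For the final assertion, observe that if $P$ is non-Hirsch, then $l \ge n - d + 1$, equivalently $l - 1 \ge n - d$. Consequently the diameter of $P_k$ satisfies
\[
k(l-1) + 1 \;\ge\; k(n-d) + 1 \;>\; k(n-d),
\]
while the Hirsch bound for $P_k$ is $\bigl(k(n-d) + d\bigr) - d = k(n-d)$. Hence $P_k$ is non-Hirsch as well. I do not anticipate any genuine obstacle here: the entire content is in Lemma~\ref{lemma:glue}, and the only thing to watch is that simplicity is inherited through the iteration (so that the lemma applies again at each step), which is precisely the conclusion of the glueing lemma itself.
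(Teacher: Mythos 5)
Your proof is correct and follows the same route as the paper: induction on $k$ with Lemma~\ref{lemma:glue} applied to $P_{k-1}$ and $P$, plus the observation that $l-1\ge n-d$ when $P$ is non-Hirsch. Your explicit remark that simplicity is preserved (so the lemma can be reapplied) is a small but welcome addition the paper leaves implicit.
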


\begin{proof}
By induction on $k$, applying Lemma~\ref{lemma:glue} to $P_{k-1}$ and $P_1:=P$.
For the second part, assume that $P$ is non-Hirsch. That is, $l\ge n-d+1$. Then:
\[
k(n-d) +d -d = k(n-d) \le k(l-1),
\]
so $P_k$ is non-Hirsch as well.
\end{proof}

\begin{remark}[McMullen, personal communication]
\label{rem:mcmullen}
\rm
It is a consequence of Corollary~\ref{coro:glue} that if there is a linear bound, say $H(n,d)\le an+b$, for the diameters of polytopes of a fixed dimension $d$, then one has also the bound $H(n,d)\le a(n-d)+1$ (in the same dimension). Indeed, from a $d$-polytope with $n$ facets and diameter $a(n-d)+2$ (or higher), the corollary above gives $d$-polytopes in which the ratio of diameter to number of facets tends to (at least)
\[
\lim_{k\to \infty}\frac{k(a(n-d)+1)+1}{k(n-d) + d} =  a +\frac{1}{n-d} > a. 
\]
%
%
As one referee pointed out to us, this remark is reminiscent of Lemma 2 in~\cite{Barnette-LBT}: let $F_1(n,d)$ denote the maximum number of edges of all simplicial $d$-polytopes with $n$ vertices. If  $F_1(n,d) \ge dn-b$ holds all $n$ and $d$ and some constant $b$, then  $F_1(n,d) \ge d(n-d) + {d \choose 2}$ also holds.
\end{remark}

It is a consequence of Corollary~\ref{coro:glue} (and a special case of the remark above) that the dimensions for which Theorem~\ref{thm:asymptotic} holds are those for which there is a polytope violating the Hirsch bound by \emph{at least two}.  Lemma~\ref{lemma:product} implies that this is the case for dimension $2d$ if there is a non-Hirsch $d$-polytope. To give a more explicit statement, we call \emph{Hirsch excess} (or simply \emph{excess}) of a $d$-polytope with $n$ facets and diameter $l$ the ratio
\[
\frac{l}{n-d} -1.
\]

\begin{theorem}
\label{thm:asymptotic2}
Let $P$ be a non-Hirsch polytope of dimension $d$ and excess $\epsilon$. Then, for each $k\in\N$ there is an infinite family of non-Hirsch polytopes of dimension $kd$ and with excess greater than
\[
\left(1-\frac{1}{k}\right)\epsilon.
\]
\end{theorem}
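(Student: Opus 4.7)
The plan is to combine the two tools already at our disposal: Lemma~\ref{lemma:product} raises the dimension by taking products, and Corollary~\ref{coro:glue} keeps the dimension fixed while producing an infinite family. We first use the product to land in dimension $kd$, and then use glueing to spawn an infinite family in that dimension. The Hirsch excess will shrink under the second operation, and the numerical content of the theorem is precisely that it does not shrink too much.

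Concretely, let $P$ have dimension $d$, number of facets $n$, and diameter $l=(1+\epsilon)(n-d)$. By the usual pulling/pushing reduction (or by applying the construction of Theorem~\ref{thm:dstep-spindle} and noting that its output is simple) we may assume $P$ is simple. Set $R:=P^k$; by Lemma~\ref{lemma:product}, $R$ is a simple polytope of dimension $D:=kd$ with $N:=kn$ facets and diameter $L:=k(1+\epsilon)(n-d)=(1+\epsilon)(N-D)$. Now apply Corollary~\ref{coro:glue} to $R$ with parameter $m\in\N$ to obtain, for each $m$, a $D$-polytope $R_m$ with $m(N-D)+D$ facets and diameter at least $m(L-1)+1$. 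This is our infinite family, indexed by $m$.

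It remains to estimate the excess of $R_m$. A direct calculation gives
\[
\operatorname{excess}(R_m)\;=\;\frac{m(L-1)+1}{m(N-D)}-1
\;=\;\epsilon-\frac{1}{N-D}+\frac{1}{m(N-D)}
\;>\;\epsilon-\frac{1}{k(n-d)}.
\]
The hypothesis that $P$ is non-Hirsch says $l\ge n-d+1$, i.e., $\epsilon\ge \frac{1}{n-d}$, so $\frac{1}{k(n-d)}\le \frac{\epsilon}{k}$. Substituting,
\[
\operatorname{excess}(R_m)\;>\;\epsilon-\frac{\epsilon}{k}\;=\;\Bigl(1-\frac{1}{k}\Bigr)\epsilon,
\]
which is what Theorem~\ref{thm:asymptotic2} asks for; in particular each $R_m$ is non-Hirsch. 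Since the construction produces a distinct polytope for every $m\in\N$, the family is infinite.

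The only real obstacle is the strict inequality: asymptotic bounds from glueing naturally give $\ge (1-1/k)\epsilon$ in the limit, so one needs the positive term $+\frac{1}{m(N-D)}$ from Corollary~\ref{coro:glue} together with the ``non-Hirsch slack'' $\epsilon\ge 1/(n-d)$ to get strict inequality. A secondary point is to ensure $P$ is simple so that Lemma~\ref{lemma:glue} (and hence Corollary~\ref{coro:glue}) applies; this is standard, since any non-Hirsch polytope can be replaced by a simple non-Hirsch polytope of the same dimension and facet count and of at least the same diameter.
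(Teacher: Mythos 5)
Your proof is correct and follows essentially the same route as the paper's: take the $k$-fold product $P^k$ to reach dimension $kd$, glue $m$ copies of it via Corollary~\ref{coro:glue}, and bound the resulting excess below by $\epsilon-\frac{1}{k(n-d)}$ together with the non-Hirsch slack $\epsilon\ge\frac{1}{n-d}$. Your explicit remark that $P$ should first be replaced by a simple polytope so that the glueing lemma applies is a hypothesis the paper leaves implicit, and is handled exactly as you say.
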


\begin{proof}

Let $n$ be the number of facets of $P$ and let $l=(n-d)(\epsilon +1)$ be its diameter. The $k$-fold power $P^k$ has dimension $kd$, it has $kn$ facets and it has diameter $kl$. Now, glue an arbitrary number, say $j$ of copies of $P^k$ to one another.
Corollary~\ref{coro:glue} says that the polytope $P_{k,l}$ so obtained has dimension $kd$, it has $j(kn-kd)+kd$ facets and it has diameter $j(kl-1)+1$. Let us compute its excess:
\[
\frac{j(kl-1)+1}{j(kn-kd)}-1=\frac{jk(l-n+d)-j+1}{jk(n-d)} =\epsilon- \frac{j-1}{jk(n-d)} > \epsilon- \frac{1}{k(n-d)}.
\]
To finish the proof we just need to show that $\frac{1}{n-d}\le \epsilon$. This is so because $\epsilon=\frac{l-n+d}{n-d}$, and $l-n+d\ge 1$.
\end{proof}

In particular, starting with the non-Hirsch polytope of Corollary~\ref{coro:nonHirsch}, which has dimension $43$, $86$ facets and diameter $44$ ($\epsilon=1/43$), we can get infinite sequences of excess $1/86$ in dimension $86$ and of excess as close to $1/43$ as we want in fixed (but very high) dimension $d$. With the improved counterexamples announced in~\cite{5prismatoids} the numbers $43$ and $86$ can be replaced by $20$ and $40$, respectively.

\begin{remark}
\label{rem:excess}
\rm
From the last sentence in the proof of Theorem~\ref{thm:asymptotic2} we see that if $P$ violates the Hirsch inequality by an amount $b=l-n+d$ greater than $1$ then the conclusion of the theorem can be improved to 
\[
\left(1-\frac{1}{bk}\right)\epsilon.
\]
That is, the lower bound for the excess that we get in each dimension is slightly increased, but it is still smaller than the original excess $\epsilon$ of $P$. We do not know of any operation that can be applied to a non-Hirsch polytope and yield another one with higher excess.
%
\end{remark}

\subsection*{Acknowledgements}
I started seriously looking at the Hirsch Conjecture during my sabbatical stay at UC Davis in 2007-08. I thank UC Davis and the Spanish Ministry of Science for supporting my stay, and Jes\'us A.~De Loera and Edward D.~Kim for pushing me into the topic.
I also thank Jes\'us, Eddie and Julian Pfeifle for the computational verification of Theorem~\ref{thm:prismatoid}.

Besides the three above the following people sent me useful comments on previous versions of this paper: Michele Barbato, Louis J.~Billera, Enrique Castillo, Fred B.~Holt, Michael Joswig, Gil Kalai, Peter Kleinschmidt, Luis M.~Pardo, Vincent Pilaud, Tom\'as Recio, Bernd Sturmfels, Michael J.~Todd and G\"unter M.~Ziegler. I  also thank the two anonymous referees for several corrections and suggestions.


\vskip.5cm
\noindent {\small Francisco Santos}\newline
\emph{Departamento de Matem\'aticas, Estad\'istica y Computaci\'on}\newline
\emph{Universidad de Cantabria, E-39005 Santander, Spain}\newline
\emph{email: }\url{francisco.santos@unican.es}\newline
\emph{web: }\url{http://personales.unican.es/santosf/}


\end{document}